	\def\doi#1{\url{https://doi.org/#1}}}
\colorlet{markerColor1}{YellowGreen}
\colorlet{markerColor2}{Orange}
\colorlet{contourLineColor}{Gray!70}
\colorlet{pathColor1}{Red}
\colorlet{pathMarkerColor1}{Red!20}
\colorlet{pathColor2}{ProcessBlue!80!black}
\colorlet{pathMarkerColor2}{ProcessBlue!20}
\colorlet{pathColor3}{green!90!black}
\colorlet{pathMarkerColor3}{green!20}
\colorlet{pathColor4}{Orange!70!Yellow}
\colorlet{pathMarkerColor4}{Orange!20}
\newcommand{\NN}{\mathbb{N}}
\newcommand{\cA}{\mathcal{A}}
\newcommand{\cC}{\mathcal{C}}
\newcommand{\cI}{\mathcal{I}}
\newcommand{\cQ}{\mathcal{Q}}
\newcommand{\arctail}[1]{\alpha(#1)}
\newcommand{\archead}[1]{\omega(#1)}
\newcommand{\inarcs}[2][]{\delta^{\operatorname{in}}_{#1}(#2)}
\newcommand{\outarcs}[2][]{\delta^{\operatorname{out}}_{#1}(#2)}
\newcommand{\indegree}[2][]{\operatorname{deg}^{\operatorname{in}}_{#1}({#2})}
\newcommand{\outdegree}[2][]{\operatorname{deg}^{\operatorname{out}}_{#1}({#2})}
\newcommand{\degree}[2][1]{\operatorname{deg}_{#1}({#2})}
\newcommand{\setofstpaths}{\mathcal{P}}
\newcommand{\incidentedges}[2][]{\delta_{#1}(#2)}
\newcommand{\gadget}[1]{\operatorname{gad}(#1)}
\newclass{\SigmaTwoP}{\ensuremath{\Sigma_2\P}}
\let\OrigSigmaTwoP\SigmaTwoP
\renewcommand{\SigmaTwoP}{\OrigSigmaTwoP\xspace}
\let\OrigNP\NP
\renewcommand{\NP}{\OrigNP\xspace}
\newcommand{\ADP}{\lang{ADP}\xspace}
\newcommand{\PAFP}{\lang{PAFP}\xspace}
\newcommand{\QSAT}[1][2]{\lang{QSAT_{#1}}\xspace}
\newcommand{\SFP}{\lang{SFP}\xspace}
\newcommand{\SigmaTwoSat}{\lang{\Sigma_2 SAT}\xspace}
\newcommand{\bigOt}[1]{\mathcal{O}(#1)}
\newcommand{\bigO}[1]{\mathcal{O}\left(#1\right)}
\newcommand{\DNF}[1][3]{#1-DNF\xspace}
\newcommand{\ownchoose}[3][1]{\genfrac(){0pt}{#1}{#2}{#3}}
\newcommand{\restrict}[2]{\left.#1\right|_{#2}}
\newcommand{\setcolon}{:}  % maybe use \colon\, according to West
\newcommand{\true}{\texttt{true}\xspace}
\newcommand{\false}{\texttt{false}\xspace}\usetikzlibrary{automata}
\tikzset
{
	>=latex,
	stdArc/.style = {->},
	snakeArc/.style = {->, decorate, decoration={snake,amplitude=.5mm,post length=1mm}},
	stdEdge/.style = {-},
	stdVertex/.style = {circle, draw, inner sep=2pt, minimum size=8pt},
	stdVertexPhantom/.style = {circle, inner sep=2pt, minimum size=8pt},
	smallVertex/.style = {circle, draw, inner sep=0pt, minimum size=5pt},
	smallVertexPhantom/.style = {circle, inner sep=0pt, minimum size=5pt},
	arrowVertex/.style = {dart, fill, shape border uses incircle, inner sep=1.4pt, draw},
	smallArrowVertex/.style = {dart, fill, shape border uses incircle, inner sep=1pt, draw},
	marker1/.style = {markerColor1,line width=10pt,opacity=.7},
	marker2/.style = {markerColor2,line width=10pt,opacity=.7},
	pathMarker1/.style = {pathMarkerColor1, line width=8pt},
	pathMarker2/.style = {pathMarkerColor2, line width=8pt},
	pathMarker3/.style = {pathMarkerColor3, line width=8pt},
	pathMarker3/.style = {pathMarkerColor4, line width=8pt},
	contourLine/.style = {color=contourLineColor},
}
	\crefname{assumption}{Assumption}{Assumptions}
	\crefname{notation}{Notation}{Notations}
	\crefname{assumption}{assumption}{assumptions}
	\crefname{notation}{notation}{notations}
\definecolor{orcidcol}{HTML}{A6CE39}
\renewcommand\orcidID[1]{\,\href{https://orcid.org/#1}{\color{orcidcol}\faOrcid}}
\begin{document}
\title{Almost Disjoint Paths and Separating by Forbidden Pairs} % \thanks{Supported by organization x.}
%
%\titlerunning{Abbreviated paper title}
% If the paper title is too long for the running head, you can set
% an abbreviated paper title here
%
\author{Oliver Bachtler\orcidID{0000-0001-7942-0750} \and
Tim Bergner\orcidID{0000-0002-7899-6835} \and
Sven O. Krumke\orcidID{0000-0002-8726-9963}}
\authorrunning{O.\ Bachtler, T.\ Bergner, and S.\ O.\ Krumke}
% First names are abbreviated in the running head.
% If there are more than two authors, 'et al.' is used.
%
\institute{Department of Mathematics, Technische~Universität~Kaiserslautern, Paul-Ehrlich-Str.~14, 67663~Kaiserslautern, Germany\\
\email{\{bachtler,bergner,krumke\}@mathematik.uni-kl.de}}
\maketitle              % typeset the header of the contribution
\begin{abstract}
% !TeX spellcheck = en_US
By Menger's theorem, the maximum number of arc-disjoint paths from a vertex~$s$ to a vertex~$t$ in a directed graph equals the minimum number of arcs needed to disconnect $s$ and~$t$, i.e., the minimum size of an $s$-$t$-cut.
The max-flow problem in a network with unit capacities is equivalent to the arc-disjoint paths problem.
Moreover, the max-flow and min-cut problems form a strongly dual pair.
We relax the disjointedness requirement on the paths, allowing them to be almost disjoint, meaning they may share up to one arc.
The resulting almost disjoint paths problem (\ADP) asks for $k$ $s$-$t$-paths such that any two of them are almost disjoint.
The separating by forbidden pairs problem (\SFP) is the corresponding dual problem and calls for a set of $k$ arc pairs such that every $s$-$t$-path contains both arcs of at least one such pair.

In this paper, we explore these two problems, showing that they have an unbounded duality gap in general and analyzing their complexity.
We prove that \ADP is \NP-complete when $k$ is part of the input and that \SFP is \SigmaTwoP-complete, even for acyclic graphs.
Furthermore, we efficiently solve \ADP when $k \leq 2$ is fixed and present a polynomial time algorithm based on dynamic programming for \ADP when $k$ is constant and the considered graphs are acyclic.
\keywords{Directed Acyclic Graphs \and Almost Disjoint Paths \and Path Avoiding Forbidden Pairs \and Complexity \and Graph Algorithms \and Dynamic Programming.}
\end{abstract}
\section{Introduction} \label{sec:intro}
% !TeX spellcheck = en_US
In many applications, customers receive various offers from which they can select one or between which they can switch.
Usually, these offers should be as diverse as possible to provide the customer with many different options.
A common use case is the construction of alternative routes in transportation or road networks.
These make sense in this context, for example to avoid route closures, heavy traffic, or tolls.
Another example where alternative routes are of use is to distribute risk.
For example, if dangerous goods need to be transported regularly, alternative routes that affect different people allow for an equal risk distribution amongst the people exposed.
Several practical algorithms for computing alternative routes have been developed, see, for example, \cite{ADGW10,AEB00,BDGS11,DGS05,JKPK09}.

On the graph-theoretic side, the (arc- or vertex-) disjoint paths problem is well-studied.
Determining a maximum number of disjoint $s$-$t$-paths can easily be done using maximum flow techniques~\cite{AMO93}.
By Menger's theorem~\cite{Men27,Die10} this number is equal to the minimum number of arcs needed to separate $s$ from $t$.
This result follows from the max-flow min-cut theorem~\cite{DF55}, which shows that these two problems form a strongly dual pair.

The following extension of the disjoint paths problem is also well-understood:
given $k$ pairs of terminals $(s_1,t_1),\ldots,(s_k, t_k)$, the objective is to find disjoint $s_i$-$t_i$-paths.
For undirected graphs it is solvable in polynomial time if $k$ is constant (see \cite{RS95} for a cubic and \cite{KKR12} for a quadratic algorithm) and \NP-complete in general~\cite{EIS76}.
In the case of directed graphs, a single path is easy and two paths are already \NP-complete~\cite{FHW80}.
The problem remains \NP-complete for few paths even on very restricted graph classes like acyclic, Eulerian, or planar graphs~\cite{Vyg95}.

Another possible extension is to ask for $k$ disjoint $s$-$t$-paths that are short, which again makes sense for routing purposes.
Suurballe~\cite{Suu74} describes an algorithm for this problem that is based on shortest path labelings.
It is possible to combine both extensions and ask for shortest paths between different terminals.
Eilam-Tzoreff~\cite{Eil98} shows that these problems in all configurations (for directed and undirected graphs with vertex- or arc-disjoint paths) are \NP-complete and also provides a polynomial algorithm for two paths in an undirected graph with positive edge-weights.
Bérczi and Kobayashi~\cite{BK17} present a polynomial algorithm for the directed version, also with two paths and positive arc-lengths.

In contrast, the same problem where the paths need not be completely disjoint has not garnered as much attention in the literature.
There are several natural relaxations of the disjointedness condition:
a first option allows arcs to be part of more than one path, say each arc may be used by two.
This problem can be solved in the same way as the disjoint paths problem, by a maximum flow computation in the graph with arcs of capacity~2.
% TODO do we want this?
An alternative is to allow some arcs, say one, to be part of an arbitrary number of paths.
This, too, can be solved by maximum flow techniques, but requires one flow computation for each arc.
These flows are computed in the graphs where one arc's capacity is set to infinity (and the rest remain at capacity~1).
% An alternative:
%Two conceivable options are to allow arcs to be part of a fixed $l$ number or paths or to allow a fixed number $l$ of edges to be part of arbitrarily many paths.
%Both of these relaxations can be solved by maximum flow techniques, either by setting the capacities of the arcs to~$l$ or by computing a maximum flow for each arc, in which we set its capacity to infinity (and keep the rest at capacity~1).
The choice we cover in this paper allows paths to have at most one arc in common and leads to the almost disjoint paths problem (\ADP).
More precisely, we wish to find $k$ paths in a directed graph that are almost disjoint, meaning that any two of them share at most one arc.
This could be generalized to allow for some fixed number of common arcs.

Most of the literature on nearly disjoint paths is of a very practical nature as is evidenced by the initial examples we presented. 
We now discuss some of the (rarer) theoretical results that exist for problems similar to \ADP.
Liu et al.~\cite{LJYZ18} introduce the $k$ shortest paths with diversity problem, in which the goal is to find a set of sufficiently dissimilar paths of maximum size (bounded by~$k$).
Of such sets, the one that minimizes the total path length is optimal.
For this problem, an (incorrect) \NP-hardness proof as well as a greedy framework is presented.
Chondrogiannis et al.~\cite{CBDLB18} fix said \NP-hardness proof, showing that the problem is indeed strongly \NP-hard, and develop an exact algorithm for it as well as heuristics.
Moreover, Chondrogiannis et al.\ \cite{CBG20} consider the problem of finding $k$~shortest paths with limited overlap.
%This differs from the previous problem by requiring the $k$ paths to exist (instead of looking for a maximal set of up to $k$ paths).
They prove that this variant is weakly \NP-hard and develop two exact algorithms for it. %, one of which is pseudo-polynomial if $k$ is constant.
The problems here are similar to \ADP in the sense that they look for paths that are sufficiently dissimilar, though the measures used always result in similarity values between~0 and~1 because they compare the number of arcs in common with some function based on the lengths of the two paths.
Additionally, they want to minimize the total length of the paths found.

Inspired by the strong duality of max-flow and min-cut, we make analogous considerations for our almost disjoint paths problem.
By dualizing the linear relaxation of an integer programming formulation, we obtain a dual problem which we call separating by forbidden pairs (\SFP).
Its goal is to select as few arc pairs as possible such that every $s$-$t$-path in~$G$ contains both arcs of at least one chosen pair.
While the linear programming relaxations of these problems form a dual pair and thus have the same objective value~\cite{GKT51}, the corresponding integer versions are only weakly dual.
Note that in the min-cut problem we select an arc on every $s$-$t$-path whereas in \SFP we select a pair of arcs on every $s$-$t$-path.

Apart from being dual to \ADP, \SFP adds another level on top of the well-known path avoiding forbidden pairs problem (\PAFP).
In the latter problem one is given a set of arc pairs~$\cA$ and has to identify whether some $s$-$t$-path avoids all pairs in~$\cA$ (meaning \SFP asks for a set that makes the corresponding \PAFP instance unsolvable).
Originating from the field of automated software testing~\cite{KSG73}, \PAFP also has applications in aircraft routing~\cite{BBB+15} and biology, for example in peptide sequencing~\cite{CKT+00} or predicting gene structures~\cite{KVB09}.
The \PAFP is \NP-complete~\cite{GMO76} and various restrictions on the set of forbidden pairs have been considered.
The problem becomes solvable if the pairs satisfy certain symmetry properties~\cite{Yin97} or if they have a hierarchical structure~\cite{KP09} while it remains \NP-hard even if the pairs have a halving structure~\cite{KP09} or no two pairs are nested~\cite{Kov13}.
The structure of the \PAFP polytope has been analyzed \cite{BBB+15} and Hajiaghayi et al.~\cite{HKKM10} show that determining a path that uses a minimal number of forbidden pairs cannot have a sublinear approximation algorithm.

Note that some of the referenced papers consider forbidden pairs of vertices instead of pairs of arcs.
However, these two variants can be converted into one another by standard constructions.
Moreover, as this problem is usually regarded on acyclic graphs, we also specifically regard \ADP and \SFP under this restriction.

\paragraph{Contributions.}
We analyze the complexity of the weakly dual problems \ADP and \SFP.
In \cref{sec:preliminiaries} we define these problems in addition to introducing notation and stating general assumptions.
We also investigate their duality, showing that, unlike for the max-flow and min-cut problems, the duality gap here can be arbitrarily large (even for acyclic graphs).
However, for graphs that have an $s$-$t$-cut of capacity~1, they are actually strongly dual, that is, the duality gap is~0.
\Cref{sec:adp} then covers \ADP, starting \cref{sec:adp:dynamic_program} with the case that $k \leq 2$ which can be solved efficiently using flow techniques.
By restricting to acyclic graphs, the case where $k$ is constant also becomes tractable.
We then use the resulting algorithm to solve the problem for general graphs.
\begin{restatable}{theorem}{adppoly} \label{thm:adppoly}
	For constant~$k$, \ADP is polynomial time solvable.
\end{restatable}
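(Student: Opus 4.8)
The plan is to reduce the general case to the acyclic case that (as announced in \cref{sec:adp:dynamic_program}) is solvable in polynomial time for constant~$k$. The natural first step is to observe that an $s$-$t$-path in a general directed graph~$G$ visits each vertex at most once, so it ``respects'' a topological-like ordering only locally; the obstruction to acyclicity is that a path may traverse arcs of a strongly connected component in an order not compatible with any global linear order. However, since we only ever deal with $k$ paths and each is simple, the relevant combinatorial information is bounded. Concretely, I would guess and verify a vertex on each path, or more robustly, guess a cyclic ``phase structure'': contract or reason about the condensation of~$G$ into strongly connected components, note that every $s$-$t$-path induces a single path in the condensation (which is acyclic), and then handle the behaviour inside each component separately.

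The key steps, in order, are as follows. First, compute the condensation $G^{\mathrm{c}}$ of~$G$; it is acyclic, and any $s$-$t$-path of~$G$ projects to an $s$-$t$-path of~$G^{\mathrm{c}}$. Second, for the collection of $k$ desired paths, guess the (at most $k$) distinct paths they induce in~$G^{\mathrm{c}}$ together with, for each of the $k$ paths and each strongly connected component~$C$ it enters, the entry and exit vertices in~$C$; the number of such guesses is polynomial for constant~$k$ because $G^{\mathrm{c}}$ has at most $|V(G)|$ vertices and a simple path in it has at most $|V(G)|$ vertices. Third, inside each strongly connected component~$C$, the problem localizes: we must route the (constantly many) subpaths through~$C$ between their prescribed entry/exit pairs so that the sharing constraint (any two of the global $k$ paths share at most one arc \emph{in total}) is met. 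Fourth, stitch the per-component solutions together; the arc-sharing budget is global, so the guess must also specify, for each pair of the $k$ paths, in which component (or cross-component arc) their single shared arc — if any — lies, and this is again only a constant amount of extra information.

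The main obstacle is the third step: solving \ADP ``locally'' inside a strongly connected component is not obviously easier than the original problem, because the component itself need not be acyclic. The resolution I would pursue is to feed this local problem back into the acyclic dynamic program by a further reduction: replace~$C$ by a layered/time-expanded copy with $|V(C)|$ layers (so that a simple path using at most $|V(C)|$ vertices of~$C$ maps to a path in the acyclic expansion), being careful that the expansion does not create spurious ``almost disjoint'' configurations — i.e., arcs of~$C$ must be tracked across layers so that two paths sharing an arc of~$C$ are still recognized as sharing it. One must check that the acyclic algorithm of \cref{sec:adp:dynamic_program} can be run on such an expansion with a slightly generalized arc-identification relation (several arcs of the expansion corresponding to one arc of~$G$ count as the same arc for the sharing constraint), and that its running time stays polynomial for constant~$k$ since the expansion has size polynomial in~$|V(G)|$. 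Finally, I would note that correctness of the overall procedure follows because the guesses range over a set that provably contains the ``shape'' of any valid solution, and each guess is checked by a correct polynomial-time subroutine, so the algorithm accepts iff a valid family of $k$ almost disjoint $s$-$t$-paths exists.
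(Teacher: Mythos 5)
Your final device---a layered, time-expanded acyclic copy of the graph in which arcs are identified across layers, so that two paths count as sharing an arc precisely when they contain copies of the same original arc, solved by the acyclic dynamic program of \cref{thm:adp_poly_acyclic_graphs} under this coarser identification---is exactly the paper's proof of \cref{thm:adppoly}. The paper applies it directly to all of~$G$: it builds $n=|V|$ layers with a copy $u_{i-1}v_i$ of every arc $uv\in A$, adds a new target~$t'$ with arcs $t_i t'$, and reruns the dynamic program after reinterpreting the intersection pattern~$\cI$ (and the set~$\cC$ in the recursion) as ``these two paths contain copies of the same arc of~$G$.'' This is a polynomial-size construction and a constant-overhead modification of the DP, so no condensation or guessing is needed at all.

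The route you take to reach that device, however, contains a genuine gap: the enumeration in your second step is not polynomial. The number of distinct $s$-$t$-paths in the (acyclic) condensation $G^{\mathrm{c}}$ can be exponential in $|V|$, and the additional data ``entry and exit vertex in every strongly connected component the path enters'' is a sequence of up to $\Theta(|V|)$ choices from up to $|V|$ candidates each, so the guess space has size $|V|^{\Theta(|V|)}$ already for $k=1$; constancy of~$k$ does not help, because the blow-up is along the length of each path, not across the $k$ paths. Acyclicity of $G^{\mathrm{c}}$ bounds the length of a path, not the number of paths. In addition, the per-component subproblem you would then have to solve (route prescribed entry-exit pairs so that designated pairs of global paths share at most one arc in this component and all other pairs share none) is only sketched, and its correct treatment is again your layered-expansion idea. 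Both issues vanish if you drop the condensation/guessing layer and apply the expansion with copy-identification to the whole graph, which is precisely what the paper does.
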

The case where $k$ is part of the input is discussed in \cref{sec:adp:completeness}, resulting in a proof of the next theorem.
\begin{restatable}{theorem}{adpnp} \label{thm:adpnp}
	\label{thm:adp_np-complete}
	\ADP is \NP-complete, even on acyclic graphs.
\end{restatable}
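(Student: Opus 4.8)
Membership in \NP is clear: $k$ $s$-$t$-paths form a certificate (in an acyclic graph they are simple, hence of length $<\lvert V\rvert$), and checking that every two of them share at most one arc is polynomial; so it suffices to prove that \ADP is \NP-hard on acyclic graphs. The plan is to reduce from $3$-satisfiability. Given a formula $\varphi$ with variables $x_1,\dots,x_n$ and clauses $C_1,\dots,C_m$, I would build an acyclic graph $G_\varphi$ with source $s$ and sink $t$ and ask for $k=n+m$ almost disjoint $s$-$t$-paths. The intended solution consists of $n$ ``variable paths'' $V_1,\dots,V_n$, one per variable, encoding a truth assignment, and $m$ ``clause paths'' $K_1,\dots,K_m$, one per clause, each passing through a gadget for one of its literals. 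To make ``share at most one arc'' behave like unit capacities I would sprinkle \emph{length-two segments} $u\to w\to v$ (with $w$ of in- and out-degree one) throughout: two distinct paths both running through such a segment already share two arcs, so each such segment can be used by at most one path of any almost disjoint family.

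Concretely: for each variable $x_j$, create a vertex $\alpha_j$ reached from $s$ through a length-two segment, a vertex $\omega_j$ from which $t$ is reached through a length-two segment, and two internally disjoint directed chains $T_j,F_j$ from $\alpha_j$ to $\omega_j$. Splice into $F_j$ one length-two segment $\sigma_{i,j}$ for every clause $C_i$ containing the literal $x_j$, and into $T_j$ one such segment for every clause containing $\neg x_j$ (segments of distinct clauses spaced apart along the chain). For each clause $C_i$, create $\beta_i$ reached from $s$ through a length-two segment, with one out-arc into each of the (at most three) segments belonging to $C_i$'s literals, and a vertex $\epsilon_i$ reachable from the far end of each of those segments, from which $t$ is reached through a length-two segment. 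Laying everything out in the order ``$s$; entry vertices; the chains, traversed in order; the $\omega_j$ and $\epsilon_i$; $t$'' makes $G_\varphi$ acyclic, of size polynomial in $\lvert\varphi\rvert$. (One could instead reduce from directed edge-disjoint paths with the number of commodities in the input, which is \NP-complete even on acyclic graphs, via the same length-two-segment trick; the from-scratch version above sidesteps the nuisance of $s$-$t$-paths that do not respect commodities.)

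For the easy direction, from a satisfying assignment I would route $V_j$ along whichever of $T_j,F_j$ matches the value of $x_j$ (traversed in full) and route $K_i$ through a segment $\sigma_{i,\ell}$ for a literal $\ell\in C_i$ that is made true, leaving at once via $\epsilon_i$ to $t$; these $n+m$ paths turn out to be pairwise \emph{arc-disjoint}, since distinct variable paths (and distinct clause paths) live in disjoint parts of $G_\varphi$, and the variable path of $x_j$ and a clause path selecting a literal of $x_j$ sit on the two different chains of $x_j$ precisely because that literal is true. For the converse, given $n+m$ pairwise almost disjoint $s$-$t$-paths, each begins with one of the $n+m$ entry segments and no two can share one, so the family is exactly one path $V_j$ through each $\alpha_j$ and one $K_i$ through each $\beta_i$. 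I would then establish the key \emph{canonicity} claim: $V_j$ must run along one whole chain of $x_j$ (it cannot leak out of a chain through some $\epsilon_i$ and on to $t$), and $K_i$ must pass through exactly one segment $\sigma_{i,\ell}$ and then leave through $\epsilon_i$ (it cannot wander on along a chain). Here the $n+m$ length-two segments leading into $t$ do the work: each is consumed by exactly one path of the family, and following this matching forbids the leaks and the wanderings. Granting canonicity, put $\rho(x_j)$ equal to the chain $V_j$ runs on; if a clause $C_i$ were unsatisfied by $\rho$, then for the literal $\ell\in C_i$ picked by $K_i$ — say $\ell=x_j$ — the path $V_j$ runs on $F_j$ and hence traverses the two arcs of $\sigma_{i,j}$, which $K_i$ also traverses, contradicting almost disjointness; so $\rho$ satisfies $\varphi$.

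The main obstacle is exactly this canonicity claim inside the soundness argument: one has to check that every way an $s$-$t$-path of $G_\varphi$ can deviate from the intended behaviour — leaking out of a chain, re-entering a chain, wandering past its segment, or ending at the ``wrong'' in-neighbour of $t$ — creates a two-arc overlap with one of the other $n+m-1$ paths in the family. The length-two segments are placed with precisely this in mind, but verifying it is a somewhat delicate case analysis and forms the technical core of the reduction.
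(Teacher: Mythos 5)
Your membership argument and the completeness direction are fine, and the core gadget idea (length-two segments with a degree-one middle vertex acting as unit capacities, because two paths through such a segment would share two arcs) is sound. The gap is the canonicity claim you defer to ``a somewhat delicate case analysis'', and it is not merely unproven: for the construction as you describe it, it is false, and with it the soundness direction. Counting the $n+m$ entry and exit segments only gives a \emph{bijection} between your paths and the exits $\{\omega_j\}\cup\{\epsilon_i\}$; nothing prevents a variable path from leaving its chain early through some $\epsilon_i$ (the arc from the far end of $\sigma_{i,j}$ to $\epsilon_i$ is available to it), while the displaced clause path escapes through some $\omega_{j'}$ by wandering along a chain. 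Concretely, take the unsatisfiable formula $\varphi=(x\vee y)\wedge(\neg x\vee y)\wedge(x\vee\neg y)\wedge(\neg x\vee\neg y)$ (pad with a dummy literal if you insist on width exactly three; the flaw does not depend on clause width), and order the segments so that on $F_x$ the segment of $C_3=(x\vee\neg y)$ precedes that of $C_1=(x\vee y)$, on $T_x$ the segment of $C_4$ precedes that of $C_2$, on $F_y$ the segment of $C_2$ comes first, and on $T_y$ the segment of $C_4$ precedes that of $C_3$. Then the following $n+m=6$ paths are pairwise arc-disjoint, hence almost disjoint: $P_x$ runs along $F_x$ up to the segment of $C_3$ and exits via $\epsilon_3$; $P_y$ runs along $F_y$ up to the segment of $C_2$ and exits via $\epsilon_2$; $K_1$ enters the $F_x$-segment of $C_1$ from $\beta_1$ and exits via $\epsilon_1$; $K_2$ enters the $T_x$-segment of $C_2$ from $\beta_2$ and continues along $T_x$ to $\omega_x$; $K_3$ enters the $T_y$-segment of $C_3$ and continues along $T_y$ to $\omega_y$; $K_4$ enters the $T_x$-segment of $C_4$ and exits via $\epsilon_4$. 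So your graph admits the required number of almost disjoint paths although $\varphi$ is unsatisfiable: the reduction is unsound as it stands, and repairing it needs an extra mechanism that actually forbids early diverts of variable paths and wanderings of clause paths, not just the exit matching.

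It is instructive to compare with how the paper avoids exactly this trap. It reduces from Independent Set, builds an edge gadget with two auxiliary input/output channels, asks for $2m+k$ paths, and in the soundness direction fixes a solution maximizing the number of auxiliary paths; the structural statements (\cref{lem:adp:aux_path_aux_output,lem:adp:vertex_path_correct_vertex_output,lem:adp:no_gadget_with_two_vertex_paths}) are then proved by exchange arguments showing that every non-canonical routing can be locally rerouted to strictly increase the number of auxiliary paths, a contradiction to the extremal choice. Some analogous saturation-plus-exchange device (or additional filler paths/gadgets that turn every early divert or wandering into a two-arc overlap or into an improvable configuration) is precisely what your construction is missing.
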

We also note that this result remains true when paths are allowed to have a fixed number of arcs in common, instead of just one.
Finally, in \cref{sec:sfp}, we consider \SFP and prove $\SigmaTwoP$-completeness.
\begin{restatable}{theorem}{sfpstwop} \label{thm:sfpstwop}
	\label{thm:sfp_sigma2p-complete}
	\SFP is \SigmaTwoP-complete, even on acyclic graphs.
\end{restatable}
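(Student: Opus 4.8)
The plan is to establish membership in $\SigmaTwoP$ and then prove $\SigmaTwoP$-hardness by a reduction from $\QSAT$ (i.e., $\SigmaTwoSat$, the problem of deciding whether $\exists X \forall Y\, \varphi(X,Y)$ holds for a quantified Boolean formula with $\varphi$ in, say, \DNF). For membership, observe that an instance of \SFP consisting of a digraph $G$, terminals $s,t$, and a bound $k$ is a yes-instance iff there \emph{exists} a set $\cA$ of $k$ arc pairs such that \emph{for all} $s$-$t$-paths $P$ in $G$, $P$ contains both arcs of some pair in $\cA$. The outer existential quantifier guesses $\cA$ (polynomially many bits), and the inner part is the complement of \PAFP on the instance $(G,s,t,\cA)$, hence in \coNP. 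This places \SFP in $\SigmaTwoP = \NP^{\NP}$. Restricting attention to acyclic $G$ does not change this argument.

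For hardness, I would reduce from $\SigmaTwoSat$ with $\varphi = \bigvee_{j} \ell_{j,1}\wedge\ell_{j,2}\wedge\ell_{j,3}$ a \DNF over existential variables $x_1,\dots,x_n$ and universal variables $y_1,\dots,y_m$. The high-level idea: the set $\cA$ of forbidden pairs chosen by the $\SFP$-solver should encode a truth assignment to the $x_i$'s, while the $s$-$t$-paths of $G$ should encode truth assignments to the $y_i$'s together with a choice of a \DNF-term to "witness"; a path then escapes all forbidden pairs exactly when the corresponding $y$-assignment, extended by the encoded $x$-assignment, fails to satisfy $\varphi$. Concretely I would build $G$ as a layered acyclic graph: first a sequence of "choice gadgets", one per existential variable $x_i$, each offering two parallel arcs (a "true" arc and a "false" arc); then a sequence of choice gadgets for the universal variables $y_i$, again with two parallel arcs each; and finally, for the \DNF part, a gadget where the path must commit to one term $j$ and route through arcs that "read off" the literals of that term. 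The budget is set to $k = n$, forcing $\cA$ to pick exactly one pair per $x_i$-gadget, and the two arcs of that pair are placed so that selecting the pair corresponds to fixing $x_i$ to a particular value: one arc lies in the $x_i$-choice gadget (the arc for the opposite value, say), the other lies in the term-reading region on every arc that "uses" the opposite literal of $x_i$. With careful placement, a path survives all forbidden pairs iff (a) it picks, in each $x_i$-gadget, the arc consistent with $\cA$'s encoded assignment — which it can always do — and (b) the $y$-assignment it encodes together with this $x$-assignment makes the chosen term, hence $\varphi$, \emph{false}. Thus "no surviving path exists" $\iff$ "the encoded $x$-assignment makes $\varphi$ true for all $y$", i.e. $(G,s,t,n)$ is a yes-instance of \SFP iff $\exists X\forall Y\,\varphi$. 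Acyclicity is immediate from the layered construction.

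The main obstacle is the delicate bookkeeping in the hardness construction: I must ensure that the only "useful" choices for $\cA$ within the budget $k=n$ are exactly the $n$ pairs that encode a consistent $x$-assignment (no pair should be able to kill paths more cheaply in some unintended way), and simultaneously that \emph{every} $y$-assignment is realizable by some $s$-$t$-path regardless of $\cA$, so that the inner $\forall Y$ is faithfully simulated. This typically requires padding arcs, making the $y$-gadget arcs disjoint from everything a forbidden pair could touch, and a short argument that an optimal $\cA$ never "wastes" a pair on two arcs that no single $s$-$t$-path can both contain (such a pair is useless) nor on a pair lying entirely within one gadget's parallel arcs in a way that blocks too little. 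I would isolate these as two lemmas — "$\cA$ may be assumed to consist of one canonical pair per $x_i$" and "for every $Y$-assignment there is an $s$-$t$-path encoding it" — and then the correctness equivalence follows by a direct case analysis on the chosen \DNF-term. A final remark would note that, since \PAFP is already studied on acyclic graphs, the whole reduction stays within the acyclic setting, matching the theorem statement.
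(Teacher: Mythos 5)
Your membership argument is the same as the paper's and is fine: guess the $k$ pairs and use the complement of \PAFP as the universally quantified check, which places \SFP in $\SigmaTwoP$ regardless of acyclicity. The hardness sketch, however, has a genuine logical gap. Since $\varphi$ must be in \DNF (otherwise $\exists X\forall Y\,\varphi$ is not \SigmaTwoP-hard), $\varphi(X,Y)$ is \emph{false} only when \emph{every} term is false. Your surviving path commits to a single term and you conclude that making ``the chosen term, hence $\varphi$, false''; that inference is invalid for a disjunction. A path could pick any $Y$ and any term that happens to be false under it even though another term is true, so a surviving path would exist for essentially every formula and the reduction would not be faithful. The paper avoids this by layering the formula gadget so that every $s$-$t$-path must pass through one clause assignment unit \emph{per clause}, and escaping each layer requires falsifying that clause (via the $v^L t^L$ arc for a false $y$-literal or a detour through a variable gadget for a false $x$-literal); consistency of the per-clause $y$-choices is then enforced by dedicated inconsistency gadgets.

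The pair-placement scheme also fails as described. With budget $k=n$ a single pair has exactly two arcs, so it cannot monitor the several occurrences of $x_i$ across different terms; and splitting a pair between an arc of the $x_i$-choice gadget and an arc in the term-reading region is toothless, because the path can take the parallel arc of the other value in the choice gadget (you yourself note it ``can always do'' so) and then traverse the term arc without ever completing the pair, so no cheating path is caught. The paper's construction instead puts both arcs of each chosen pair on two \emph{consecutive} arcs inside the variable gadget that the clause detour is forced to traverse, uses $q_i+1$ pairs per variable, and proves (Lemma~\ref{lem:sfp_hardness-separating_sets_variable_gadget}) that $\cA_i$ and $\overline{\cA_i}$ are the only optimal separating sets, which is what makes ``$\cA$ encodes an $x$-assignment'' rigorous. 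Finally, your two auxiliary lemmas (canonical form of $\cA$, realizability of every $Y$) hide the hardest bookkeeping: preventing cheap unintended separations near $s$ and $t$ and killing ``mixed'' paths between gadgets is exactly what the paper's typification framework with bunches of parallel arcs and a much larger budget $k$ (\cref{eq:sfp:def-k}) is for, and ``padding arcs'' alone does not supply it.
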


\section{Preliminaries} \label{sec:preliminiaries}
% !TeX spellcheck = en_US
We begin this section by introducing our basic notation.
Next, we define the problems we consider, that is, the almost disjoint paths problem (\ADP) and the separating by forbidden pairs problem (\SFP).
In this context, we also establish a connection to the path avoiding forbidden pairs problem (\PAFP).
Afterwards, we investigate the duality of \ADP and \SFP, showing that they are only weakly dual with an arbitrarily large duality gap in general, but we also present a special case where they are strongly dual.
We complete this section by making some non-restrictive assumptions on the graphs we consider.

\subsection{Basic Notation}

Throughout this paper, directed graphs will be denoted by $G = (V, A)$ and have a designated source~$s \in V$ and target~$t \in V$.
When a graph is fixed, the set of all its $s$-$t$-paths is denoted by~$\setofstpaths$, and we denote the start-vertex of an arc~$a \in A$ by~$\arctail{a}$ and its end-vertex by~$\archead{a}$.
By $\indegree[G]{v}$ and $\outdegree[G]{v}$ we denote in-\ and outdegree of a vertex~$v$ in the graph~$G$, and $\inarcs[G]{v}$ and $\outarcs[G]{v}$ denote its incoming and outgoing arcs, respectively.
For the subgraph induced by a subset $U \subseteq V$ we write~$G[U]$.
Given a set~$S$ and a natural number~$k \in \NN$ we denote the set of all $k$-element subsets of~$S$ by~$\ownchoose{S}{k}$.

\subsection{Problems}

Let us now define the two problems we are interested in, starting with the almost disjoint paths problem, in which paths are allowed to share up to one arc, in contrast to the strict disjoint paths problem, in which they may not.
\begin{definition} \label{def:almost_disjoint}
	A set of paths in a (directed) graph is called \emph{almost disjoint} if every two paths of this set have at most one arc in common.
\end{definition}

\begin{problem} \label{def:adp}
	The \emph{Almost Disjoint Paths problem} (\ADP) is given by a directed graph~$G = (V, A)$ together with two designated vertices $s, t \in V$ and a natural number $k \in \NN$.
	The question is whether $k$ almost disjoint $s$-$t$-paths exist.
\end{problem}

Its weak dual (as we will see in the next subsection) is the separating by forbidden pairs problem:
\begin{problem} \label{def:sfp}
	The \emph{Separating by Forbidden Pairs problem} (\SFP) is given by a directed graph $G = (V, A)$, two designated vertices $s, t \in V$, and a natural number $k \in \NN$.
	The question is whether there exist $k$~(unordered) pairs of arcs, that is, $\cA \subseteq \ownchoose{A}{2}$ with $|\cA| = k$,  that separate ($s$ and $t$ in) $G$, meaning that every $s$-$t$-path contains both arcs of at least one pair $q \in \cA$.
	% TODO we use this formulation a lot, so it should be defined, but the part in parentheses may just be harmful and could be removed. Both are used though, the full version just less often. Also separate all paths appears, which technically means separate the union of all paths.
\end{problem}
We note that \SFP contains the path avoiding forbidden pairs problem (\PAFP) as a subproblem.
In \PAFP, a directed graph $G$ and a set of arc pairs $\cA$ is given.
The goal is to find an $s$-$t$-path in~$G$ that does not contain both arcs of a pair in $\cA$.
In this light, \SFP asks for a set $\cA$ such that there is no solution to the corresponding \PAFP instance.

\subsection{Duality}

The problems \ADP and \SFP form a pair of weakly dual problems as the following integer programming formulations for their optimization variants demonstrate.
\begin{align*}
	\max & \sum_{P \in \setofstpaths} y_P && & \min & \sum_{q \in \ownchoose{A}{2}} x_q \\
	\text{s.t.} ~ & \sum_{\substack{P \in \setofstpaths\\ q \subseteq A(P)}} y_P \leq 1 && \text{f.a. } q \in \ownchoose{A}{2} & \text{s.t.} ~ & \sum_{q \in \ownchoose{A(P)}{2}} x_q \geq 1 && \text{f.a. } P \in \setofstpaths \\
	& y_P \in \{0, 1\} && \text{f.a. } P \in \setofstpaths & & x_q \in \{0, 1\} && \text{f.a. } q \in \ownchoose{A}{2}
\end{align*}
More precisely, the LP-relaxations of these formulations that we obtain by replacing $y_P \in \{0, 1\}$ and $x_q \in \{0, 1\}$ by $y_P \geq 0$ and $x_q \geq 0$ form a dual pair.
In contrast to Menger's theorem~\cite{Men27,Die10}, almost disjoint
paths have an unbounded duality gap as the following lemma shows:

\begin{lemma}
	The duality gap between \ADP and \SFP is unbounded.
\end{lemma}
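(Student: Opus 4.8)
The plan is to construct, for every $k \in \NN$, an acyclic graph $G_k$ with source $s$ and target $t$ in which at most two almost disjoint $s$-$t$-paths exist, but in which separating $s$ from $t$ by forbidden pairs requires arbitrarily many pairs. The duality gap on such a family is then $\Omega(k)$, which is unbounded.

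First I would look for a small "gadget" whose only $s$-$t$-paths come in an antichain-like family where every two paths share at least two arcs, so that no three of them are pairwise almost disjoint; this caps the \ADP optimum at $2$ (or some fixed constant). A natural candidate is a layered graph built from many parallel two-arc "diamonds" glued in series, together with extra long arcs that skip over blocks of diamonds: any path is forced to traverse long shared stretches, so any two distinct $s$-$t$-paths overlap in more than one arc. The key structural claim to verify here is: \emph{in $G_k$, any collection of pairwise almost disjoint $s$-$t$-paths has size at most a constant} (ideally $2$). I would prove this by arguing that if three paths were pairwise almost disjoint, a counting argument on the forced shared arcs gives a contradiction.

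Next I would lower-bound the \SFP optimum on the same $G_k$. Here the goal is to show that no set $\cA$ of fewer than, say, $k$ arc pairs can hit every $s$-$t$-path with both arcs of a pair. The natural technique is an adversary/exchange argument: given any $\cA$ with $|\cA| < k$, exhibit an $s$-$t$-path $P$ such that for every pair $q \in \cA$, $P$ misses at least one arc of $q$ — equivalently, solve the associated \PAFP instance. In a layered/series construction, each pair $q \in \cA$ can "block" only a bounded portion of the combinatorial choices at each stage, so with $|\cA| < k$ choices available, a free path survives. Making this precise — choosing the right parameters so that each forbidden pair kills at most one "degree of freedom" and there are strictly more degrees of freedom than pairs — is the crux.

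I expect the main obstacle to be designing the gadget so that \emph{both} bounds hold simultaneously: forcing heavy pairwise overlap among paths (to keep the \ADP value constant) tends to make the path structure rigid, which could also make it easy to separate with few pairs; conversely, giving enough routing freedom to defeat small forbidden-pair sets risks creating many pairwise almost disjoint paths. The resolution I would aim for is a two-scale design — a coarse scale of genuinely many disjoint-looking choices (driving the \SFP cost up) superimposed on a fine scale of unavoidable shared arcs (keeping the \ADP value at $2$), e.g. every route must pass through one common "bottleneck" arc while still having $k$ independent binary decisions elsewhere, each decision being invisible to all but one forbidden pair. Once the gadget is fixed, both the \ADP upper bound and the \SFP lower bound should follow from direct combinatorial arguments as sketched above, and the lemma follows by letting $k \to \infty$.
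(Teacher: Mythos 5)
Your proposal is a plan rather than a proof: no gadget is ever fixed, and both key bounds (the constant upper bound on the number of pairwise almost disjoint paths and the lower bound of $k$ on the number of forbidden pairs) are left as claims that ``should follow from direct combinatorial arguments.'' You yourself identify the tension between the two bounds as the crux, and the one concrete resolution you sketch is provably doomed: if every $s$-$t$-path is forced through a single common bottleneck arc $uv$, then (after discarding arcs on no $s$-$t$-path, which changes neither problem) the vertices reachable from $s$ without using $uv$ form an $s$-$t$-cut whose only outgoing arc is $uv$, and the paper shows that in exactly this situation the duality gap is \emph{zero}: almost disjoint paths must be arc-disjoint away from $uv$, and pairing $uv$ with the arcs of a minimum cut on one side yields equally many forbidden pairs. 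So a ``bottleneck plus $k$ independent binary decisions'' design forces the \ADP and \SFP values to coincide and cannot exhibit any gap. Moreover, whether a family with \ADP value bounded by $2$ and \SFP value tending to infinity exists at all is not established by your sketch; the lemma does not require it.

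The paper's construction is different and much lighter: take an $s$-$t$-path with $\ell$ arcs and replace each arc by a bunch of $k$ parallel arcs, with $\ell \geq k + 1 + k(k-1)/2$. A counting argument shows that at most $k$ pairwise almost disjoint $s$-$t$-paths exist, while the fractional assignment $y \equiv 1/k^{\ell-2}$ is feasible for the LP relaxation of \ADP with objective value $k^2$, so by LP duality every separating set of forbidden pairs has size at least $k^2$. The gap is thus a growing ratio $k^2/k$; the \ADP value is allowed to grow, which is precisely what makes the construction easy, whereas your insistence on a constant \ADP value is what creates the rigidity problem you ran into. To salvage your route you would need an explicit family, a proof of the \ADP upper bound, and a proof of the \SFP lower bound (for instance via a feasible fractional \ADP solution and LP duality, as the paper does); none of these is present in the current text.
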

\begin{proof}
	Let $P^k_{\ell}$ denote an $s$-$t$-path of length~$\ell$ whose $\ell$~arcs are all replaced by bunches of~$k$ parallel arcs.
	Assume $k \geq 2$ and $\ell \geq k + 1 + k (k - 1) / 2$.
	We prove that we need $k^2$ pairs to separate $s$ and~$t$ in $P^k_{\ell}$ but that we can find at most $k$ almost disjoint $s$-$t$-paths therein.

	We start by proving that we need $k^2$ forbidden pairs to separate $s$ and~$t$ in~$P^k_{\ell}$.
	The graph $P^k_{\ell}$ has $k^{\ell}$ different $s$-$t$-paths.
	For a fixed pair $q \in \ownchoose{A}{2}$ we either have 0 or~$k^{\ell - 2}$ $s$-$t$-paths using this pair (depending on whether both arcs are contained in the same bunch or not).
	Therefore, $y \equiv 1 / (k^{\ell - 2})$ is feasible for \ADP's LP-relaxation and has an objective value of $k^2$.
	That we need $k^2$ pairs to separate $s$ and~$t$ in~$P^k_{\ell}$ now follows from the strong duality theorem of linear programming \cite{GKT51}.

	To show that $P^k_{\ell}$ admits only $k$~almost disjoint paths let $\cQ$ be a maximum set of almost disjoint $s$-$t$-paths and enumerate them arbitrarily.
	Since the graph has $k$~disjoint $s$-$t$-paths, we have $|\cQ| \geq k$.
	The second path has at most one arc in common with the first path and, if this is the case, we can assume without loss of generality that it is contained in the first bunch.
	More general, the $i$-th path has at most one arc in common with any of the first $i - 1$~paths and we can assume (again without loss of generality) that these arcs are contained in the first $\sum_{j=1}^{i-1} j$ bunches.

	Hence, if two of the first $k$~paths in~$\cQ$ have an arc in common, we can assume that it is in the first $\sum_{j=1}^{k-1} j = k (k - 1) / 2$ bunches.
	Our assumption $\ell \geq k + 1 + k (k - 1) / 2$ thus implies that each arc from the last $k + 1$ bunches is contained in at most one of the first $k$ paths.
	Each bunch consists of $k$ arcs.
	Thus, each arc of the last $k + 1$ bunches is contained in exactly one of the first $k$ paths.
	Another path now has to use one arc from each of the $k + 1$ last bunches.
	However, this means that it has at least two arcs in common with one of the first $k$ paths.
	Hence, there is no further path and it follows $|\cQ| = k$.
	\qed
\end{proof}

However, the duality gap is not always unbounded.
For example, if we restrict ourselves to graphs that have an $s$-$t$-cut with a single outgoing arc, the duality gap disappears.
\begin{lemma}
	Let $G$ be a directed graph and $s,t\in V$.
	If $G$ has an $s$-$t$-cut~$(S, T)$ with a single outgoing arc $\outarcs{S} = \{uv\}$, the duality gap is zero and we can solve \ADP and \SFP in polynomial time.
\end{lemma}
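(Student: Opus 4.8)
The crux is that the arc $uv$ is a bottleneck that every $s$-$t$-path must traverse. The plan is to first establish the following structural fact and then derive everything from it. Since $uv$ is the only arc leaving $S$ and $s \in S$, $t \in T$, an $s$-$t$-path $P$ first crosses from $S$ to $T$ along $uv$, and it cannot return to $S$ afterwards, as that would force a second use of $uv$. Hence every $s$-$t$-path decomposes as $P = P_1 \cdot uv \cdot P_2$ with $P_1$ an $s$-$u$-path in $G[S]$ and $P_2$ a $v$-$t$-path in $G[T]$, and conversely every such concatenation is an $s$-$t$-path, since the two halves use disjoint vertex sets. Let $\lambda_S$ (resp.\ $\lambda_T$) be the maximum number of arc-disjoint $s$-$u$-paths in $G[S]$ (resp.\ $v$-$t$-paths in $G[T]$); by Menger's theorem these equal the corresponding minimum cut sizes. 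Put $m := \min(\lambda_S, \lambda_T)$. (If $s = u$ the first half is trivial and one reads $\lambda_S = \infty$, similarly for $v = t$; the only truly degenerate situation, in which the unique $s$-$t$-path is the single arc $uv$, is trivial and we set it aside.)

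I would then show that $G$ admits $m$ almost disjoint $s$-$t$-paths. Pick, using Menger, arc-disjoint $s$-$u$-paths $Q_1, \dots, Q_m$ in $G[S]$ and arc-disjoint $v$-$t$-paths $R_1, \dots, R_m$ in $G[T]$ (they exist since $\lambda_S, \lambda_T \ge m$), and set $P_i := Q_i \cdot uv \cdot R_i$. Because $G[S]$ and $G[T]$ have disjoint arc sets and $uv$ lies in neither, for $i \ne j$ the only arc common to $P_i$ and $P_j$ is $uv$; thus $\{P_1, \dots, P_m\}$ is an almost disjoint family of size $m$.

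Next I would exhibit a separating family of $m$ pairs. Assume w.l.o.g.\ $\lambda_S \le \lambda_T$, so $m = \lambda_S$, and let $C_S = \{a_1, \dots, a_m\}$ be a minimum $s$-$u$-cut of $G[S]$. Each $a_i$ has both endpoints in $S$, hence $a_i \ne uv$, so $\cA := \{\,\{a_i, uv\} : 1 \le i \le m\,\}$ is a family of $m$ genuine arc pairs. Given any $s$-$t$-path $P = P_1 \cdot uv \cdot P_2$, the $s$-$u$-path $P_1$ lies in $G[S]$ and so meets $C_S$, say $a_i \in A(P_1) \subseteq A(P)$; since also $uv \in A(P)$, the path $P$ contains both arcs of $\{a_i, uv\} \in \cA$, so $\cA$ separates $s$ and $t$. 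Now combine this with the generic weak duality between \ADP and \SFP: for any almost disjoint family $\{P_1,\dots,P_\ell\}$ and any separating family $\cB$, each $P_i$ contains both arcs of some pair $q_i \in \cB$, and $q_i = q_j$ with $i \ne j$ would force $P_i, P_j$ to share those two arcs, so $i \mapsto q_i$ is injective and $|\cB| \ge \ell$. Together with the two constructions this shows that the optimal \ADP value and the optimal \SFP value both equal $m$; in particular the duality gap is zero. Finally, $m$ is computed by two unit-capacity max-flow computations ($s$-$u$ in $G[S]$ and $v$-$t$ in $G[T]$), after which the \ADP instance $(G,s,t,k)$ is a yes-instance iff $k \le m$ and the \SFP instance iff $m \le k \le \ownchoose{|A|}{2}$ (for $k > m$, pad $\cA$ with arbitrary further pairs); hence both problems are solvable in polynomial time.

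The one real idea is the bottleneck decomposition: routing every path through $uv$ consumes the single arc that almost disjoint paths are allowed to share, which simultaneously collapses \ADP into two independent arc-disjoint-paths problems and makes ``$uv$ paired arc-by-arc with a minimum cut on the thinner side'' an optimal separating family. Everything after that is Menger's theorem and bookkeeping; the only delicate points are the degenerate cases $s = u$, $v = t$, and the single-arc path.
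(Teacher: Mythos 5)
Your proof is correct and follows essentially the same route as the paper: every $s$-$t$-path must pass through the bottleneck arc $uv$, so combining maximum families of disjoint $s$-$u$-paths in $G[S]$ and $v$-$t$-paths in $G[T]$ gives the almost disjoint paths, while pairing $uv$ with each arc of a minimum cut on the thinner side gives a separating family of the same size. You merely spell out the weak-duality counting and the degenerate cases ($s=u$, $v=t$) explicitly, which the paper leaves implicit.
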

\begin{proof}
	Note that every $s$-$t$-path in $G$ must use the arc $uv$, so the paths in a set of almost disjoint $s$-$t$-paths must be disjoint aside from $uv$.
	We can compute maximum sets $\setofstpaths_s$ and~$\setofstpaths_t$ of disjoint $s$-$u$- and $v$-$t$-paths, respectively.
	Combining $k = \min\{|\setofstpaths_s|, |\setofstpaths_t|\}$ paths from each of these sets results in $k$~almost disjoint $s$-$t$-paths.
	In addition, one of the subgraphs $G[S]$ or~$G[T]$ has a cut with $k$~outgoing arcs.
	Bundling $uv$ with each of these arcs results in $k$ forbidden pairs separating $s$ and~$t$.
	\qed
\end{proof}

\subsection{Assumptions}

If the direct arc~$st$ is contained in the graph, this arc itself forms an $s$-$t$-path.
With respect to \ADP this path is disjoint from every other $s$-$t$-path and can always be added to a set of almost disjoint paths.
Regarding \SFP this is a path that never contains a forbidden pair as it only has length~1.
Thus, every instance containing the direct arc~$st$ cannot be separated.
This justifies the following assumption.
\begin{assumption} \label{ass:no-direct-arc-st}
	The direct arc~$st \notin A$ is not contained in the graph.
\end{assumption}

For both problems, \ADP as well as \SFP, every arc and every vertex that is not contained in any $s$-$t$-path is irrelevant and can be removed.
Hence, we assume such arcs and vertices do not exist.
\begin{assumption} \label{ass:every-arc-in-st-path} \label{ass:every-vertex-in-st-path}
	Every arc and every vertex of the graph is contained in an $s$-$t$-path.
\end{assumption}

\section{Almost Disjoint Paths} \label{sec:adp}
% !TeX spellcheck = en_US
In this section, we inspect the almost disjoint paths problem.
We first show how to solve it for $k\leq 2$ before presenting a dynamic program that deals with any constant $k$.
Lastly, we prove that the problem is \NP-complete when $k$ is part of the input, even if the graph is acyclic.

\subsection{Constantly Many Paths} \label{sec:adp:dynamic_program}

For $k = 1$, \ADP reduces to reachability, which can be solved in linear time.
We can check whether two almost disjoint paths exist by computing one maximum flow per arc.
For $a \in A$ we can define arc capacities~$c_a$ with $c_a(a) = 2$ and $c_a(a') = 1$ for $a' \in A \setminus \{a\}$.
An $s$-$t$-flow with respect to $c_a$ of value~$\ell$ corresponds to $\ell$ many $s$-$t$-paths that have at most the arc~$a$ in common.
Therefore, two almost disjoint paths exist if and only if an $s$-$t$-flow of value at least~2 exists with respect to arc capacities~$c_a$ for some arc~$a$.
Hence, we can solve the problem for $k=2$ by computing a maximum $s$-$t$-flow with respect to $c_a$ for each $a\in A$ and checking whether one of them has value at least~2.
In fact, instead of computing a maximum flow for each capacity $c_a$ we can check whether a flow of value at least~2 exists by simply making (at most) two flow augmentations, which only require linear time.
\begin{lemma}
	For $k=2$, \ADP can be solved in $\bigO{|A|(|V|+|A|)}$ time.
\end{lemma}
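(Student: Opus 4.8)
The plan is to make the discussion preceding the lemma precise. Recall that for $a \in A$ the capacity function $c_a$ assigns $c_a(a) = 2$ and $c_a(a') = 1$ for every $a' \in A \setminus \{a\}$. The key claim is the equivalence: \emph{a set of two almost disjoint $s$-$t$-paths exists if and only if there is an arc $a \in A$ admitting an integral $s$-$t$-flow of value at least $2$ with respect to $c_a$.} For the ``if'' direction I would take such a flow, cancel all of its flow cycles (this preserves the value and does not raise the flow on any arc), and decompose the remainder into $s$-$t$-paths; using that the total value is $2$ and that $st \notin A$ by \cref{ass:no-direct-arc-st} (which rules out a single $s$-$t$-path carrying two units, since every arc other than $a$ has capacity $1$), the decomposition consists of exactly two unit paths $P_1, P_2$, and as each arc other than $a$ lies on at most one of them we get $A(P_1) \cap A(P_2) \subseteq \{a\}$, so $P_1, P_2$ are almost disjoint. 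For the ``only if'' direction, given two almost disjoint paths I would route one unit of flow along each: if they are disjoint this is feasible for every $c_a$, and if they share a single arc $a$ it is feasible for $c_a$ because $a$ then carries $2 \le c_a(a)$ units while every other arc carries at most $1$.

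Given this equivalence, the algorithm iterates over all $a \in A$ and, for each, performs augmenting-path searches (for instance breadth-first searches in the residual network) starting from the zero flow with capacities $c_a$, stopping and reporting \emph{yes} as soon as the current flow value reaches $2$; if no arc ever yields value $2$, it reports \emph{no}. Two rounds of augmentation per arc are enough: each integral augmentation raises the value by at least $1$, so if the maximum $c_a$-flow value is at least $2$ this is witnessed after at most two augmentations, and if the maximum value is at most $1$ then after at most one successful augmentation the next search reports that no augmenting path remains. Each residual network has $\bigO{|V|}$ vertices and $\bigO{|A|}$ arcs, so a single search runs in $\bigO{|V| + |A|}$ time; with at most $2|A|$ searches in total this gives the claimed bound $\bigO{|A|(|V| + |A|)}$.

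The step I expect to require the most care is the ``if'' direction of the equivalence: one must argue rigorously that a value-$2$ integral flow with respect to $c_a$, after cycle cancellation, decomposes into precisely two $s$-$t$-paths — excluding a single weight-$2$ path via the unit capacities on the arcs different from $a$ together with \cref{ass:no-direct-arc-st} — and that these unit capacities then force the two paths to overlap in at most the arc $a$, which is exactly the relaxation permitted by \cref{def:almost_disjoint}. The remaining ingredients — integral max-flow augmentation, flow decomposition, and the linear-time cost of one breadth-first search — are entirely standard.
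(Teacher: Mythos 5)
Your proposal is correct and follows essentially the same route as the paper: one modified capacity function $c_a$ per arc, the equivalence between a value-$2$ flow for some $c_a$ and two almost disjoint paths, and at most two linear-time augmentations per arc giving $\bigO{|A|(|V|+|A|)}$ overall. You merely spell out the flow-decomposition argument (including the use of \cref{ass:no-direct-arc-st} to exclude a doubled single-arc path) that the paper leaves as informal discussion preceding the lemma.
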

However, this technique does not generalize to $k > 2$ because in this case several arcs might be contained in multiple paths and we cannot guarantee that two paths only share a single arc.
Instead, we use a dynamic program to find a constant number~$k$ of almost disjoint $s$-$t$-paths in polynomial time.
We first derive the dynamic program for acyclic graphs and then adapt it to graphs that might contain cycles.

\begin{theorem} \label{thm:adp_poly_acyclic_graphs}
	For constant $k$, \ADP is polynomial time solvable on acyclic graphs.
\end{theorem}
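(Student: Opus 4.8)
The plan is to design a dynamic program that processes the vertices of the acyclic graph $G$ in topological order and maintains, for each "frontier", a bounded-size description of how $k$ partial paths have interacted so far. Since $k$ is constant, I would track $k$ path-prefixes simultaneously, each ending at some vertex; but to keep the state space polynomial I cannot remember the whole prefixes. The key observation is that the only thing that matters about the past is, for each of the $\binom{k}{2}$ pairs of paths, whether they have already shared an arc (and if so, we must never let them share another). So a natural state at a given "stage" records: the current endpoint $v_i \in V$ of each of the $k$ paths (that is a tuple in $V^k$), together with a symmetric $k\times k$ Boolean matrix $B$ where $B_{ij}$ indicates whether paths $i$ and $j$ have already used a common arc. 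This gives at most $|V|^k \cdot 2^{\binom{k}{2}}$ states, which is polynomial for constant $k$.

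First I would set up the DP formally. Process vertices $v_1, \dots, v_n$ in topological order. A configuration is reachable if there exist $k$ directed paths from $s$ to the recorded endpoints, pairwise sharing at most the arcs already "charged" in $B$, and moreover (this is the subtle point) no two of them share an arc that is \emph{not} charged in $B$. The transition: pick one path $i$ whose endpoint $v_i$ is "earliest" in topological order (to ensure we extend paths monotonically and never create cycles — acyclicity is what makes this well-defined), and advance it along one outgoing arc $a = v_i w$. When we do so, we must check for every other path $j$ with $v_j = v_i$: if path $j$'s next step would also be $a$ — but here is where care is needed, since path $j$ hasn't moved yet. The cleaner formulation: when extending path $i$ along arc $a$, for each $j \ne i$ we need to know whether path $j$ currently sits at $\arctail{a}$ and will later traverse $a$ as well. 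That look-ahead is the crux.

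The main obstacle, and the step I expect to require the most care, is handling shared arcs correctly when paths are advanced one at a time: two paths share arc $a$ only if \emph{both} traverse it, but in a sequential DP one path moves while the other is "parked" at $\arctail{a}$. I would resolve this by advancing all paths currently at the topologically-minimal frontier vertex $u$ simultaneously: from a state where some subset $I$ of the paths are at $u$, the transition chooses, for each $i \in I$, an outgoing arc $a_i \in \outarcs{u}$; then two paths $i, j \in I$ share the arc exactly when $a_i = a_j$, at which point we flip $B_{ij}$ to true, and the transition is forbidden if $B_{ij}$ was already true. Outside of $I$ nothing changes. Because we always advance from the minimal frontier vertex, each path is extended in topological order and correctness of the "share at most one charged arc" invariant is maintained inductively. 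The answer to \ADP is \texttt{yes} iff some reachable state has all $k$ endpoints equal to $t$; reconstructing the paths is standard backtracking. Finally I would note the running time is $|V|^{\bigOt{k}} \cdot 2^{\bigOt{k^2}} \cdot \maxoutdegree{G}^{\bigOt{k}}$, polynomial for fixed $k$, and that the paths need not be internally vertex-disjoint, which is fine since \ADP only constrains shared arcs — this is precisely why we track endpoints in $V^k$ rather than insisting on distinctness.
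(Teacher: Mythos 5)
Your proposal is correct and takes essentially the same approach as the paper: a polynomial-size dynamic program over $k$-tuples of per-path positions together with a pairwise intersection pattern, processed along a topological order of the acyclic graph. The only real difference is mechanical: the paper stores the \emph{last arc} of each path and extends one path at a time (the one whose last arc has the topologically latest tail, so that any newly shared arc can only occur as a last arc and is therefore visible in the state), whereas you store endpoint vertices and advance all paths sitting at the topologically minimal endpoint simultaneously, detecting sharing via coincident arc choices in a single transition -- two equivalent ways of resolving the look-ahead issue you correctly identified as the crux.
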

\begin{proof}
	Let $m = |A|$ be the number of arcs of a graph~$G$ and assume~$k$ to be fixed within this subsection.
	By assumption, $G$ is acyclic and therefore admits a topological ordering $\nu \colon V \to \NN$ of its vertices.
	\Cref{ass:every-vertex-in-st-path} implies that the source~$s$ (the target~$t$) always has the smallest (largest) value of~$\nu$.

	\subsubsection{States.}
	The dynamic program is based on states.
	A state~$((a_1, \dots, a_k), \cI)$ consists of $k$ (not necessarily disjoint) arcs and an intersection pattern~$\cI \subseteq \ownchoose{\{1, \dots, k\}}{2}$.
	We associate a state with a Boolean value~$x((a_1, \dots, a_k), \cI)$ that is \true if and only if $k$~almost disjoint paths $P_1, \dots, P_k$ with the following properties exist:
	\begin{itemize}
		\item For every $i \in \{1, \dots, k\}$ the path~$P_i$ is an $s$-$\archead{a_i}$-path whose last arc is~$a_i$.
		\item For $i \neq j$ the paths $P_i$ and~$P_j$ have an arc in common if and only if $\{i, j\} \in \cI$.
	\end{itemize}
	There are $m^k$~different possibilities to choose $k$ out of~$m$~arcs (with replacement).
	Additionally, we have $\bigOt{2^{k^2}}$~different intersection patterns, yielding $\bigOt{m^k 2^{k^2}}$~states in total.
	Note that this number is polynomial since we assume $k$ to be constant.

	\subsubsection{Comparing States.}
	To enable the computation of the truth values of all states with a dynamic program we have to order them appropriately.
	For this purpose we introduce the relation~$\prec$ on the states.
	Using the topological ordering~$\nu$ we define that $((a_1, \dots, a_k), \cI) \prec ((a'_1, \dots, a'_k), \cI')$ applies if and only if
	\begin{align*}
	\nu(\arctail{a_i}) \leq \nu(\arctail{a'_i}) &\text{ for all } i \in \{1, \dots, k\} \text{ and } \\
	\nu(\arctail{a_i}) < \nu(\arctail{a'_i}) &\text{ for at least one } i \in \{1, \dots, k\}.
	\end{align*}
	That is, we ignore the intersection pattern and compare the values in the topological ordering of the arc's start-vertices for each of the $k$~components separately.

	\subsubsection{Goal.}
	If we know the truth values of all states, we can determine whether $k$~almost disjoint $s$-$t$-paths in~$G$ exist.
	We only have to check whether a state with value \true exists whose arcs all enter the target~$t$.
	In fact, we can check this during the dynamic program when computing the truth values of the appropriate states.

	\subsubsection{Base.}
	Similarly, we proceed at the start by determining the truth values of all states whose arcs all leave the source~$s$.
	For any $k$~arcs $a_1, \dots, a_k \in \outarcs{s}$ and an arbitrary intersection pattern~$\cI$ we have that the value $x((a_1, \dots, a_k), \cI)$ is \true if and only if $\cI = \{ \{i, j\} \setcolon a_i = a_j \text{ for } i, j \in \{1, \dots, k\}, i \neq j\}$.

	\subsubsection{Recursion.}
	The dynamic program is based on a recursion that allows the computation of the truth value of a state based on the truth values of smaller states (with respect to~$\prec$).
	To derive this recursion let $((a_1, \dots, a_k), \cI)$ be a state.
	If all arcs $a_1, \dots, a_k$ start in~$s$, we are in the base case, which is already handled in the previous paragraph.
	Otherwise, an arc $a \in \{a_1, \dots, a_k\}$ maximizing the value $\nu(\arctail{a})$ satisfies $\arctail{a} \neq s$.
	Without loss of generality, we assume~$a = a_1$.

	If $a_1 = a_i$ but $\{1, i\} \notin \cI$ for some $i \in \{2, \dots, k\}$, the state must have truth value \false as any paths~$P_1$ and~$P_i$ ending with the arc~$a_1 = a_i$ have this arc in common.
	In the following, we therefore assume $\cC = \{\{1, i\} \colon a_1 = a_i, i \neq 1\} \subseteq \cI$ and define $\tilde{\cI} = \cI \setminus \cC$.
	We claim that the truth value of $((a_1, \dots, a_k), \cI)$ is the disjunction
	\begin{align} \label{eq:adp:dynamic_program:recursion}
	x((a_1, \dots, a_k), \cI) = \bigvee \left\{ x((\tilde{a}, a_2, \dots, a_k), \tilde{\cI}) \setcolon \tilde{a} \in \inarcs{\arctail{a_1}}\right\}
	\end{align}
	of truth values of smaller states.
	Note that all states in the disjunction are indeed smaller as $\nu(\arctail{\tilde{a}}) < \nu(\archead{\tilde{a}}) = \nu(\arctail{a_1})$ due to fact that $\nu$ is a topological ordering.
	We now prove the correctness.

	First, suppose that $x((a_1, \dots, a_k), \cI)$ is \true.
	Thus, there exist almost disjoint paths $P_1, \dots, P_k$ with intersection pattern~$\cI$ and last arcs $a_1, \dots, a_k$.
	The paths remain almost disjoint when removing the last arc~$a_1$ from~$P_1$.
	This removal changes the intersection pattern from $\cI$ to~$\tilde{\cI}$.
	This shows that $x((\tilde{a}, a_2, \dots, a_k), \tilde{\cI})$ is \true for $\tilde{a}$ being the penultimate arc on the path~$P_1$.
	As $\tilde{a} \in \inarcs{\arctail{a_1}}$, this value is contained in the disjunction from \cref{eq:adp:dynamic_program:recursion}.

	Now, suppose that $x((\tilde{a}, a_2, \dots, a_k), \tilde{\cI})$ is \true for an arc $\tilde{a} \in \inarcs{\arctail{a_1}}$ and let $\tilde{\cI}$ be as defined above.
	Again, there are almost disjoint paths $P_1, P_2, \dots, P_k$ with last arcs $\tilde{a}, a_2, \dots, a_k$ and intersection pattern~$\tilde{\cI}$.
	By the choice of the arc~$a_1$, we obtain that $a_1$ is the last arc of a path~$P_i$ whenever it is contained in~$P_i$.
	To see this, remember that $a_1$ is among the arcs~$\{a_1, \dots, a_k\}$ one whose start-vertex has the largest value~$\nu(\arctail{a_1})$ in the topological ordering.
	If $a_1$ is contained in a path~$P_i$ but not the last arc $a_1 \neq a_i$, we have $\nu(\arctail{a_i}) > \nu(\arctail{a_1})$ which is a contradiction.

	Since $\tilde{\cI} \cap \cC = \emptyset$, the path~$P_1$ shares no arc with another path that ends with~$a_1$.
	And because $a_1$ can only be the last arc of a path~$P_i$, we can extend~$P_1$ by the arc~$a_1$ while maintaining that the paths are almost disjoint.
	Moreover, the new intersection pattern is~$\cI$ as
	\begin{align*}
		\cI
		= \tilde{\cI} \cup \cC
		= \tilde{\cI} \cup \{\{1, i\} \colon a_1 = a_i, i \neq 1\}
		= \tilde{\cI} \cup \{\{1, i\} \setcolon a_1 \in P_i, i \neq 1\}.
	\end{align*}
	This shows the correctness of the recursion from \cref{eq:adp:dynamic_program:recursion}.
	Hence, we can compute the truth values of all states in polynomial time.
	\qed
\end{proof}
\adppoly*
\begin{proof}
	We prove the claim by converting~$G = (V, A)$ into a directed acyclic graph~$G'$ and by adapting the dynamic program from the proof of \cref{thm:adp_poly_acyclic_graphs} to the new situation.
	To this end, let $n = |V|$.

	The vertex set of~$G'$ consists of $n$ copies $v_1, \dots, v_n$ for every vertex $v \in V$, and we call the vertices $\{v_i \setcolon v \in V\}$ the \emph{$i$-th layer} of~$G'$.
	For every arc~$uv \in A$ we add the $n - 1$ arcs~$u_{i - 1} v_i$ for $i \in \{2, \dots, n\}$ to~$G'$, which we call \emph{copies} of~$uv$.
	Since all arcs in~$G'$ lead exactly one layer up, the graph~$G'$ constructed so far is acyclic.
	Furthermore, we add an additional vertex~$t'$ to~$G'$, which we connect with arcs $t_i t'$ for $i \in \{1, \dots, n\}$.
	Note that~$G'$ remains acyclic.
	In order to ensure \cref{ass:every-arc-in-st-path} we restrict~$G'$ to those vertices and arcs that are reachable from~$s_1$ and from which we can reach~$t'$.

	The basic idea is now to find $k$ almost disjoint $s_1$-$t'$-paths in~$G'$ and translate these back to $s$-$t$-paths in the original graph~$G$.
	For this we ignore the last vertex~$t'$ and replace every other vertex~$v_i$ on such a path by the corresponding vertex~$v \in V$.
	In this way, however, almost disjoint paths in~$G'$ need not remain almost disjoint in~$G$.
	For this to be the case, we have to identify all copies of an arc:
	for every two paths that we choose in~$G'$ there must be at most one arc~$uv \in A$ of which both paths contain a copy.
	If this is the case, we call the paths \emph{almost copy-disjoint}.

	To achieve this, we have to slightly modify the dynamic program from the proof of \cref{thm:adp_poly_acyclic_graphs}.
	More precisely, we update the definition of the Boolean value~$x((a_1, \dots, a_k), \cI)$.
	Instead of assuming the $s$-$\omega(a_i)$-paths~$P_i$ to be almost disjoint, we now require that they are almost copy-disjoint.
	Accordingly, we have to update the interpretation of the intersection pattern:
	we now have $\{i, j\} \in \cI$ if and only if the paths $P_i$ and~$P_j$ both contain a copy of the same arc.

	Consequently, we have to amend the recursion.
	A state must be \false not only if $a_1 = a_i$ and $\{1, i\} \notin \cI$, but even if $a_1$ and~$a_i$ are copies of the same arc and $\{1, i\} \notin \cI$.
	This also entails a slightly different definition of~$\cC$:
	\begin{equation*}
	\cC = \{\{1, i\} \setcolon \text{$a_1$ and $a_i$ are copies of the same arc}, i \neq 1\}.
	\end{equation*}

	Overall, the size of~$G'$ is polynomial in the size of~$G$, it can be constructed in polynomial time, and all modifications in the dynamic program induce only polynomial overhead.
	\qed
\end{proof}

\subsection{\NP-completeness} \label{sec:adp:completeness}

Although we can solve \ADP for constant~$k$ in polynomial time, it is \NP-complete in general.
This is stated in \cref{thm:adpnp}, which we prove in this section.
\adpnp*
\ADP is contained in \NP since we can check in polynomial time whether $k$~paths are almost disjoint.
To prove the hardness we reduce the \NP-complete independent set problem \cite{GJ79} to \ADP.
For this let an instance of the independent set problem be given by an undirected graph $H = (V_H, E_H)$.
After constructing a directed acyclic graph~$G = (V, A)$ we show that $H$ has an independent set of size~$k$ if and only if there are $2 \cdot |E_H| + k$ almost disjoint $s$-$t$-paths in~$G$.

\subsection*{Graph Construction}

The basic component to construct an instance $G = (V, A)$ of \ADP from~$H$ is a gadget $\gadget{e}$ for every edge $e \in E_H$.
We first describe these gadgets and how we combine them to obtain the graph~$G$.

\subsubsection{The Gadget.}
The main component of the \ADP instance is the edge gadget depicted in \cref{fig:adp:gadget}.
Such a gadget $\gadget{uv}$ corresponds to an edge $u v \in E_H$ and has four input vertices (drawn as arrows pointing into the gadget):
two labeled $u$ and~$v$ corresponding to the end vertices of the edge and two auxiliary inputs $h_1$ and~$h_2$.
Analogously, the gadget also has four output vertices (shown as arrows pointing outward):
$u'$ and~$v'$ as well as $h'_1$ and~$h'_2$.
In addition, it contains ten interior vertices, which we name and connect as drawn in \cref{fig:adp:gadget}.
When using such gadgets to construct $G$, we use the schematic illustration depicted on the right of \cref{fig:adp:rearrange_gadget_inputs}; note that the in- and output vertices are rearranged.

\begin{figure}[p]
	\centering
	\begin{tikzpicture}[scale=.8]
		\draw[contourLine] (0,2) -- (11,2) -- (11,-2) -- (0,-2) -- cycle;
	
		\coordinate (cu) at (0,1);
		\coordinate (cv) at (0,-1);
		\coordinate (ch1) at (1,2);
		\coordinate (ch2) at (1,-2);
		\coordinate (cw1) at (1,1);
		\coordinate (cw2) at (1,-1);
		\coordinate (cx1) at (3,1);
		\coordinate (cx2) at (3,-1);
		\coordinate (cy1) at (4.5,0);
		\coordinate (cy2) at (6.5,0);
		\coordinate (cz1) at (8,1);
		\coordinate (cz2) at (8,-1);
		\coordinate (ca1) at (10,1);
		\coordinate (ca2) at (10,-1);
		\coordinate (cup) at (11,1);
		\coordinate (cvp) at (11,-1);
		\coordinate (chp2) at (10,2);
		\coordinate (chp1) at (10,-2);
	
		\node[arrowVertex] (u) at (cu) [label=left: $u$] {};
		\node[arrowVertex] (v) at (cv) [label=left: $v$] {};
		\node[arrowVertex, rotate=-90] (h1) at (ch1) [label=left: $h_1$] {};
		\node[arrowVertex, rotate=90] (h2) at (ch2) [label=left: $h_2$] {};
		\node[stdVertex] (w1) at (cw1) [label=below: $x^L_1$] {};
		\node[stdVertex] (w2) at (cw2) [label=above: $x^L_2$] {};
		\node[stdVertex] (x1) at (cx1) [label=below left: $y^L_1$] {};
		\node[stdVertex] (x2) at (cx2) [label=above left: $y^L_2$] {};
		\node[stdVertex] (y1) at (cy1) [label=above right: $z^L$] {};
		\node[stdVertex] (y2) at (cy2) [label=above left: $z^R$] {};
		\node[stdVertex] (z1) at (cz1) [label=below right: $y^R_1$] {};
		\node[stdVertex] (z2) at (cz2) [label=above right: $y^R_2$] {};
		\node[stdVertex] (a1) at (ca1) [label=below: $x^R_1$] {};
		\node[stdVertex] (a2) at (ca2) [label=above: $x^R_2$] {};
		\node[arrowVertex] (up) at (cup) [label=right: $u'$] {};
		\node[arrowVertex] (vp) at (cvp) [label=right: $v'$] {};
		\node[arrowVertex, rotate=-90] (hp1) at (chp1) [label=right: $h'_1$] {};
		\node[arrowVertex, rotate=90] (hp2) at (chp2) [label=right: $h'_2$] {};
	
		\draw[->]
			(u.center) edge (w1)
			(h1.center) edge (w1)
			(v.center) edge (w2)
			(h2.center) to (w2)
			(w1) edge (x1)
			(w2) edge (x2)
			(x1) edge (y1) edge [bend left=40pt] (z2)
			(x2) edge (y1) edge [bend right=40pt] (z1)
			(y1) edge (y2)
			(y2) edge (z1) edge (z2)
			(z1) edge (a1)
			(z2) edge (a2)
			(a1) edge[-] (hp2.center) edge[-] (up.center)
			(a2) edge[-] (hp1.center) edge[-] (vp.center);
	\end{tikzpicture}	\caption[Gadget for the hardness proof of \ADP.]{%
		The gadget $\gadget{uv}$ of an edge~$u v \in E_H$.
	}
	\label{fig:adp:gadget}
\end{figure}

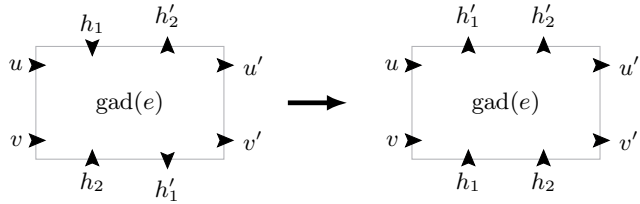
\begin{figure}[p]
	\centering
	\begin{tikzpicture}
		\draw[contourLine]
			(-1.25,.75) --
	        (1.25,.75) --
	        (1.25,-.75) --
	        (-1.25,-.75) -- cycle;
		\node at (0,0) {$\gadget{e}$};
		\node[smallArrowVertex] at (-1.25,.5) [label=left: $u$] {};
		\node[smallArrowVertex] at (-1.25,-.5) [label=left: $v$] {};
		\node[smallArrowVertex, rotate=-90] at (-.5,.75) [label=left: $h_1$] {};
		\node[smallArrowVertex, rotate=90] at (-.5,-.75) [label=left: $h_2$] {};
		\node[smallArrowVertex] at (1.25,.5) [label=right: $u'$] {};
		\node[smallArrowVertex] at (1.25,-.5) [label=right: $v'$] {};
		\node[smallArrowVertex, rotate=-90] at (.5,-.75) [label=right: $h'_1$] {};
		\node[smallArrowVertex, rotate=90] at (.5,.75) [label=right: $h'_2$] {};
	
		\draw[-{Latex[length=8pt,width=6pt]}, line width=1.8pt] (2.1,0) to (2.9,0);
	
		\draw[contourLine]
			(3.75,.75) --
	        (6.25,.75) --
	        (6.25,-.75) --
	        (3.75,-.75) -- cycle;
		\node at (5,0) {$\gadget{e}$};
		\node[smallArrowVertex] at (3.75,.5) [label=left: $u$] {};
		\node[smallArrowVertex] at (3.75,-.5) [label=left: $v$] {};
		\node[smallArrowVertex, rotate=90] at (4.5,-.75) [label=left: $h_1$] {};
		\node[smallArrowVertex, rotate=90] at (5.5,-.75) [label=left: $h_2$] {};
		\node[smallArrowVertex] at (6.25,.5) [label=right: $u'$] {};
		\node[smallArrowVertex] at (6.25,-.5) [label=right: $v'$] {};
		\node[smallArrowVertex, rotate=90] at (4.5,.75) [label=right: $h'_1$] {};
		\node[smallArrowVertex, rotate=90] at (5.5,.75) [label=right: $h'_2$] {};
	\end{tikzpicture}	\caption[Rearrangement of gadget in- and outputs.]{%
		The rearrangement of the in- and outputs of a gadget from \cref{fig:adp:gadget} in order to simplify the drawing of the graph in \cref{fig:adp:hardness_instance}.
	}
	\label{fig:adp:rearrange_gadget_inputs}
\end{figure}

\begin{figure}[p]
	\centering
	\begin{tikzpicture}[scale=.8]
		\newcommand{\drawGadget}[5]{
			\draw[contourLine]
				({#2-1.25},{#3+.75}) --
				({#2+1.25},{#3+.75}) --
				({#2+1.25},{#3-.75}) --
				({#2-1.25},{#3-.75}) -- cycle;
			\node (G#1) at (#2,#3) {$\gadget{e_{#1}}$};
			\node[smallArrowVertex] (uG#1) at ({#2-1.25},{#3+.5}) {};
			\node[smallArrowVertex] (vG#1) at ({#2-1.25},{#3-.5}) {};
			\node[smallArrowVertex, rotate=90] (h1G#1) at ({#2-.5},{#3-.75}) {};
			\node[smallArrowVertex, rotate=90] (h2G#1) at ({#2+.5},{#3-.75}) {};
			\node[smallArrowVertex] (upG#1) at ({#2+1.25},{#3+.5}) {};
			\node[smallArrowVertex] (vpG#1) at ({#2+1.25},{#3-.5}) {};
			\node[smallArrowVertex, rotate=90] (h1pG#1) at ({#2-.5},{#3+.75}) {};
			\node[smallArrowVertex, rotate=90] (h2pG#1) at ({#2+.5},{#3+.75}) {};
			\coordinate (cbh1G#1) at ({#2-.5},#4);
			\coordinate (cth1G#1) at ({#2-.5},#5);
			\coordinate (cbh2G#1) at ({#2+.5},#4);
			\coordinate (cth2G#1) at ({#2+.5},#5);
		}
	
		\def\ct{2.5}
		\def\cb{-2}
		\node[stdVertex] (s) at (0,0) [label=left: $s$] {};
		\node[stdVertex] (vV) at (1.5,0) [label=above: $v_V$] {};
		\node[stdVertex] (v1) at (3,1.5) [label=above: $v_1$] {};
		\node[stdVertex] (v2) at (3,.5) [label=above: $v_2$] {};
		\node (vk) at (3,-.25) {$\vdots$};
		\node[stdVertex] (vn) at (3,-1.5) [label=above: $v_n$] {};
	
		\drawGadget{1}{5.5}{1}{\cb}{\ct}
		\drawGadget{m}{10}{0}{\cb}{\ct}
	
		\node[stdVertex] (vE) at (7.75,-3) [label=below right: $v_E$] {};
		\node[stdVertex] (t) at (13,0) [label=right: $t$] {};
	
		\coordinate (cv1) at (12,1.5);
		\coordinate (cv2) at (12,.5);
		\coordinate (cv3) at (12,-.5);
		\coordinate (cvn) at (12,-1.5);
	
		% Knotenpfade
		\draw[->] (s) edge (vV)
	        (vV) edge (v1) edge (v2) edge (vn)
	        (v1) edge [-] (uG1)
	        (v2) edge [-] (vG1);
		\draw[->] (upG1.center) [rounded corners] -- node [xshift=-1.25cm, fill=white] {$\cdots$} (cv1) -- (t);
		\draw (vpG1.center) -- node [fill=white] {$\cdots$} (uGm);
		\draw[->] (upGm.center) [rounded corners] -- (cv2) -- (t);
	    \draw[->] (vpGm.center) [rounded corners] -- (cv3) -- (t);
	    \draw[->] (vn) [rounded corners] -- node [fill=white] {$\cdots$} (cvn) -- (t);
		\node[fill=white] (n) at ($(vGm) + (-1,0)$) {$\cdots$};
		\draw (n) -- (vGm);
	
		% Kantenpfade
		\draw[->, rounded corners] (s) -- (3, -3) -- (vE);
		\draw
			(vE) [rounded corners] -- (cbh1G1) -- (h1G1)
			(vE) [rounded corners] -- (cbh2G1) -- (h2G1)
			(vE) [rounded corners] -- (cbh1Gm) -- (h1Gm)
			(vE) [rounded corners] -- (cbh2Gm) -- (h2Gm);
		\draw[->] (h1pG1.center) [rounded corners] -- ++(0, 1.25) -- (13, 3) -- (t);
		\draw[->] (h2pG1.center) [rounded corners] -- ++(0, 1) -- (12.75, 2.75) -- (t);
		\draw[->] (h1pGm.center) [rounded corners] -- ++(0, 1.75) -- (12.5, 2.5) -- (t);
		\draw[->] (h2pGm.center) [rounded corners] -- ++(0, 1.5) -- (12.25, 2.25) -- (t);
	\end{tikzpicture}	\caption[Graph for the hardness proof of \ADP.]{%
		The graph $G = (V, A)$ for the hardness proof of \ADP. The gadgets are those from \cref{fig:adp:gadget} with rearranged in- and outputs as specified in \cref{fig:adp:rearrange_gadget_inputs}.
	}
	\label{fig:adp:hardness_instance}
\end{figure}

\subsubsection{The Graph.}
The graph $G = (V, A)$ of the \ADP instance corresponding to the graph $H = (V_H, E_H)$ is drawn in \cref{fig:adp:hardness_instance}.
It consists of a gadget $\gadget{e}$ for every edge $e \in E_H$ and additional vertices $V_H \cup \{s, t, v_V, v_E\}$.
The source~$s$ is connected with $v_V$ and~$v_E$ and the vertex~$v_V$ has outgoing arcs to all $v \in V_H$.
To every auxiliary input of a gadget we have an arc from~$v_E$.
From every auxiliary output of a gadget there is an arc to the target~$t$.

Finally, we explain how the vertex inputs of the gadgets are connected.
To this end, sort the edges $E_H = \{e_1, \dots, e_m\}$ arbitrarily.
In the graph~$G$, every vertex $u \in V_H$ is connected to the target~$t$ by a path~$P_u$ that starts with $(s, v_V, u)$ and passes through every gadget~$\gadget{e}$ corresponding to an incident edge~$e \in \incidentedges[H]{u}$.
For $\ell = \degree[H]{u}$ we choose $j_1 < \dots < j_{\ell}$ such that $\incidentedges[H]{u} = \{e_{j_1}, \dots, e_{j_{\ell}}\}$.
We connect $u$ with the input of $\gadget{e_{j_1}}$ that is labeled~$u$.
Its output~$u'$ is connected with the input~$u$ of $\gadget{e_{j_2}}$ and so on.
Finally, the output~$u'$ of the last gadget $\gadget{e_{j_{\ell}}}$ has an arc to the target.
If the vertex~$u \in V_H$ has no incident edge, we introduce the direct arc $u t$.

\begin{definition}
	\label{def:adp:auxiliary_path}
	\label{def:adp:vertex_path}
	Every $s$-$t$-path in~$G$ either starts with the arc~$s v_V$ or with the arc~$s v_E$.
	Those starting with $s v_V$ are \emph{vertex paths} and those starting with $s v_E$ are \emph{auxiliary paths}.
\end{definition}

\subsection*{From an Independent Set to Almost Disjoint Paths}

\begin{lemma} \label{lem:adp:size_ind_set_many_almost_disjoint_paths}
	Given an independent set $U \subseteq V_H$ in $H$ of size $|U| = k$, there are $2 m + k$ almost disjoint $s$-$t$-paths in $G$.
\end{lemma}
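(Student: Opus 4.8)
The plan is to exhibit $2m+k$ explicit $s$-$t$-paths in $G$ and then verify, pair by pair, that any two of them share at most one arc. Two of the paths will be associated with each edge gadget (accounting for $2m$ of them), and the remaining $k$ are the vertex paths $P_u$ with $u\in U$.

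\textbf{The $2m+k$ paths.} For every edge $e=uv\in E_H$, inside $\gadget{e}$ with vertices labelled as in \cref{fig:adp:gadget}, I take the two \emph{auxiliary paths}
\[
Q^1_e=(s,v_E,h_1,x^L_1,y^L_1,y^R_2,x^R_2,h'_1,t),\qquad
Q^2_e=(s,v_E,h_2,x^L_2,y^L_2,y^R_1,x^R_1,h'_2,t),
\]
which use the crossing arcs $y^L_1\to y^R_2$ and $y^L_2\to y^R_1$ and therefore avoid $z^L$ and $z^R$ altogether. For every $u\in U$ I take the vertex path $P_u$; inside each incident gadget it is routed from the input labelled $u$ to the output labelled $u'$, and the only route doing so is $(u,x^L_1,y^L_1,z^L,z^R,y^R_1,x^R_1,u')$ (its mirror image when $u$ sits on the other side of the gadget). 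A direct check shows that each of these $2m+k$ walks is a simple $s$-$t$-path (every consecutive pair is an arc of $G$, by the gadget definition together with the connector arcs at $s$, $v_V$, $v_E$, and $t$), and that the paths are pairwise distinct, so we indeed have $2m+k$ of them.

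\textbf{Almost-disjointness, in three cases.} First, two distinct vertex paths $P_u,P_w$ both start with $s\to v_V$ and then leave $v_V$ through different arcs; since $U$ is independent, $u$ and $w$ are non-adjacent, so no gadget lies on both paths, and they reach $t$ through different last arcs — hence they share only $s\to v_V$. (This is the one place independence is needed: adjacent $u,w$ would send $P_u$ and $P_w$ both through $\gadget{uw}$, where they would additionally share $z^L\to z^R$, giving two common arcs.) Second, two auxiliary paths $Q^i_e,Q^j_{e'}$ share $s\to v_E$, then split at $v_E$ (different gadget, or $h_1$ versus $h_2$ of the same gadget), have vertex-disjoint interiors, and enter $t$ through different last arcs — so they share only $s\to v_E$. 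Third, a vertex path $P_u$ and an auxiliary path $Q^i_e$: if $u$ is not an endpoint of $e$, they meet only in $\{s,t\}$ with different arcs, so they are arc-disjoint; if $u$ is an endpoint of $e$, then outside $\gadget{e}$ they are arc-disjoint (one uses $s\to v_V$, the other $s\to v_E$), while inside $\gadget{e}$ the forced route of $P_u$ and the route of $Q^i_e$ overlap in exactly one arc — an ``entry'' arc $x^L_\bullet\to y^L_\bullet$ when the two routes run down the same side, and an ``exit'' arc $y^R_\bullet\to x^R_\bullet$ otherwise. Collecting the three cases shows that the $2m+k$ paths are pairwise almost disjoint, which proves the lemma.

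\textbf{Expected main obstacle.} The only genuinely fiddly step is the gadget-local part of the third case: one must first argue the route of $P_u$ through a gadget is unique and then trace it arc by arc against each of $Q^1_e,Q^2_e$, checking the overlap is one arc and never two. This is exactly what the crossing arcs $y^L_1\to y^R_2$ and $y^L_2\to y^R_1$ achieve; had the auxiliary paths instead been routed straight through the diamond $x^L_\bullet\to y^L_\bullet\to z^L\to z^R\to y^R_\bullet\to x^R_\bullet$, an auxiliary path and a same-side vertex path would share three arcs. Once this local picture is settled, the rest is routine: all vertex paths meet only in the fan $s\to v_V$, all auxiliary paths only in $s\to v_E$, and these two arcs differ.
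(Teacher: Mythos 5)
Your proof is correct and follows essentially the same route as the paper: it takes the $k$ vertex paths $P_u$ for $u\in U$ plus the $2m$ auxiliary paths obtained by extending the crossing $h_1$-$h'_1$- and $h_2$-$h'_2$-paths of every gadget, and checks pairwise almost-disjointness in the same three cases (vertex/vertex via independence, auxiliary/auxiliary, vertex/auxiliary sharing exactly one in-gadget arc). Your explicit arc-by-arc verification inside the gadget is just a spelled-out version of what the paper delegates to its figure of gadget paths.
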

\begin{proof}
	We construct $2 m + k$ almost disjoint $s$-$t$-paths, $k$ of which are vertex paths for the vertices in~$U$ and the remaining $2 m$ are auxiliary paths.

	The auxiliary paths are obtained by extending the $h_1$-$h'_1$- and $h_2$-$h'_2$-paths visualized in \cref{fig:adp:gadget_paths} of all gadgets $\gadget{e}$, $e \in E_H$.
	They have the first arc~$s v_E$ in common and are disjoint afterwards.

	For $u \in U$ we choose the path~$P_u$ from the graph construction.
	It is the unique $s$-$t$-path that starts with $(s, v_V, u)$ and uses all $u$-$u'$-paths through gadgets~$\gadget{e}$ of incident edges $e \in \incidentedges[H]{u}$ as well as the arcs connecting these.
	Since $U \subseteq V_H$ is an independent set in~$H$ and since the gadgets correspond to edges in~$H$, there is no gadget $\gadget{u v}$ with $\{u, v\} \subseteq U$.
	Thus, for every gadget, we choose at most one vertex path passing through it.
	This implies that also all chosen vertex paths have the first arc $s v_V$ in common and are disjoint afterwards.

	A chosen vertex path~$P_u$ and a chosen auxiliary path have an arc in common if and only if the auxiliary path passes through a gadget corresponding to an edge incident to~$u$.
	Hence, the $2 m + k$ chosen $s$-$t$-paths are almost disjoint.
	\qed
\end{proof}

\vspace{-7pt}

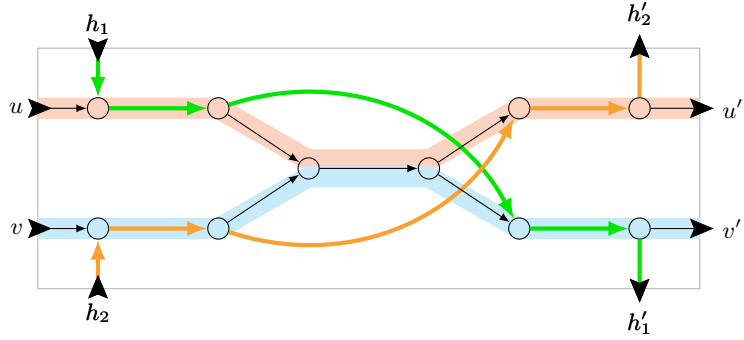
\begin{figure}[h]
	\centering
	\begin{tikzpicture}[scale=.8]
		\draw[contourLine] (0,2) -- (11,2) -- (11,-2) -- (0,-2) -- cycle;
	
		\coordinate (cu) at (0,1);
		\coordinate (cv) at (0,-1);
		\coordinate (ch1) at (1,2);
		\coordinate (ch2) at (1,-2);
		\coordinate (cw1) at (1,1);
		\coordinate (cw2) at (1,-1);
		\coordinate (cx1) at (3,1);
		\coordinate (cx2) at (3,-1);
		\coordinate (cy1) at (4.5,0);
		\coordinate (cy2) at (6.5,0);
		\coordinate (cz1) at (8,1);
		\coordinate (cz2) at (8,-1);
		\coordinate (ca1) at (10,1);
		\coordinate (ca2) at (10,-1);
		\coordinate (cup) at (11,1);
		\coordinate (cvp) at (11,-1);
		\coordinate (chp2) at (10,2);
		\coordinate (chp1) at (10,-2);
	
		\draw[pathMarker1] (cu) --
		    (cw1) -- (cx1) -- ($(cy1) + (0,4pt)$) -- ($(cy2) + (0,4pt)$) --
		    (cz1) -- (ca1) -- (cup);
		\draw[pathMarker2] (cv) --
		    (cw2) -- (cx2) -- ($(cy1) + (0,-4pt)$) -- ($(cy2) + (0,-4pt)$) --
		    (cz2) -- (ca2) -- (cvp);
	
		\node[arrowVertex] (u) at (cu) [label=left: $u$] {};
		\node[arrowVertex] (v) at (cv) [label=left: $v$] {};
		\node[arrowVertex, rotate=-90] (h1) at (ch1) [label=left: $h_1$] {};
		\node[arrowVertex, rotate=90] (h2) at (ch2) [label=left: $h_2$] {};
		\node[stdVertex] (w1) at (cw1) {};
		\node[stdVertex] (w2) at (cw2) {};
		\node[stdVertex] (x1) at (cx1) {};
		\node[stdVertex] (x2) at (cx2) {};
		\node[stdVertex] (y1) at (cy1) {};
		\node[stdVertex] (y2) at (cy2) {};
		\node[stdVertex] (z1) at (cz1) {};
		\node[stdVertex] (z2) at (cz2) {};
		\node[stdVertex] (a1) at (ca1) {};
		\node[stdVertex] (a2) at (ca2) {};
		\node[arrowVertex] (up) at (cup) [label=right: $u'$] {};
		\node[arrowVertex] (vp) at (cvp) [label=right: $v'$] {};
		\node[arrowVertex, rotate=-90] (hp1) at (chp1) [label=right: $h'_1$] {};
		\node[arrowVertex, rotate=90] (hp2) at (chp2) [label=right: $h'_2$] {};
	
		\draw[->]
			(u.center) edge (w1)
			(v.center) to (w2)
			(w1) edge [ultra thick, color=pathColor3] (x1)
			(w2) edge [ultra thick, color=pathColor4] (x2)
			(x1) edge (y1) edge [bend left=40pt, ultra thick, color=pathColor3] (z2)
			(x2) edge (y1) edge [bend right=40pt, ultra thick, color=pathColor4] (z1)
			(y1) edge (y2)
			(y2) edge (z1) edge (z2)
			(z1) edge [ultra thick, color=pathColor4] (a1)
			(z2) edge [ultra thick, color=pathColor3] (a2)
			(a1) edge [-, ultra thick, color=pathColor4] (hp2) edge[-] (up)
			(a2) edge [-, ultra thick, color=pathColor3] (hp1) edge[-] (vp);
		\draw[->,ultra thick, color=pathColor4] (h2.center) to (w2);
		\draw[->,ultra thick, color=pathColor3] (h1.center) to (w1);

		\node[arrowVertex, rotate=-90] (h1) at (ch1) [label=left: $h_1$] {};
		\node[arrowVertex, rotate=90] (h2) at (ch2) [label=left: $h_2$] {};
		\node[arrowVertex, rotate=-90] (hp1) at (chp1) [label=right: $h'_1$] {};
		\node[arrowVertex, rotate=90] (hp2) at (chp2) [label=right: $h'_2$] {};
	\end{tikzpicture}	\vspace{-7pt}
	\caption[Paths through a gadget.]{%
		A gadget as in \cref{fig:adp:gadget} with four paths through it: the unique $u$-$u'$-path (red), the unique $v$-$v'$-path (blue), an $h_1$-$h'_1$-path (green), and an $h_2$-$h'_2$-path (orange).
	}
	\label{fig:adp:gadget_paths}
\end{figure}

\vspace{-7pt}

\subsection*{From Almost Disjoint Paths to an Independent Set}

In the following, let $\cQ$ be a set of $2 m + k$ almost disjoint $s$-$t$-paths in~$G$ among which the number of auxiliary paths is maximized.
We assume $k \geq 0$ as we can always choose $2 m$ auxiliary paths as described in the proof of \cref{lem:adp:size_ind_set_many_almost_disjoint_paths}.

\begin{assumption} \label{ass:adp_with_max_aux_paths}
	No set of $2 m + k$ almost disjoint $s$-$t$-paths in~$G$ contains more auxiliary paths than $\cQ$.
\end{assumption}

The following three lemmas provide structural results of the paths in~$\cQ$.
They allow us to prove the counterpart of \cref{lem:adp:size_ind_set_many_almost_disjoint_paths} in \cref{lem:adp:number_almost_disjoint_paths_large_ind_set}, which completes the proof of \cref{thm:adp_np-complete}.

\pagebreak[2]

\begin{lemma} \label{lem:adp:aux_path_aux_output}
	Without loss of generality we can assume that every auxiliary path in~$\cQ$ leaves the first gadget it enters via an auxiliary output, which leads it directly to~$t$.
\end{lemma}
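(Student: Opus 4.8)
The plan is to prove this by an exchange argument over a carefully chosen optimal family. Refine the choice of~$\cQ$: among all sets of $2m+k$ almost disjoint $s$-$t$-paths having the maximum number of auxiliary paths (cf.\ \cref{ass:adp_with_max_aux_paths}), pick one, again called~$\cQ$, which additionally has as \emph{few} auxiliary paths leaving their first gadget through a vertex output as possible; it then suffices to show this number is~$0$. Assume for contradiction that it is not, and fix an auxiliary path $Q\in\cQ$ that enters its first gadget $\gadget{uv}$ but does not leave it through $h'_1$ or $h'_2$. The gadget of \cref{fig:adp:gadget} has an automorphism interchanging its two halves (swapping $u\leftrightarrow v$, $h_1\leftrightarrow h_2$, $h'_1\leftrightarrow h'_2$, $x^L_1\leftrightarrow x^L_2$, $y^L_1\leftrightarrow y^L_2$, $y^R_1\leftrightarrow y^R_2$, $x^R_1\leftrightarrow x^R_2$, and fixing $z^L,z^R$), so I may assume $Q$ enters $\gadget{uv}$ through~$h_1$; then $Q$ leaves $\gadget{uv}$ through $u'$ or $v'$ and afterwards runs along $P_u$ respectively $P_v$ to~$t$.

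The core observation I would establish is that \emph{every path of~$\cQ$ that uses one of the arcs $x^R_2h'_1$ or $x^R_1h'_2$ of $\gadget{uv}$ in a way that could block a rerouting of~$Q$ inside $\gadget{uv}$ is a vertex path, not an auxiliary path}. Indeed, reading off \cref{fig:adp:gadget}, any $\cQ$-path that reaches $x^R_2$ (respectively $x^R_1$) and then leaves through $h'_1$ (respectively $h'_2$) must, by the branching structure of the gadget, share one of the interior arcs that $Q$ itself traverses; if that path were auxiliary it would in addition share the arc $sv_E$ with~$Q$, giving two common arcs and contradicting almost-disjointness. Together with the elementary facts that $v_Eh_1$ and $h_1x^L_1$ lie on no $\cQ$-path but~$Q$ (any path using them also uses $sv_E$), that at most one $\cQ$-path uses each input of $\gadget{uv}$, and that two $\cQ$-paths entering through $\{u,h_1\}$ must separate immediately after~$y^L_1$ (one going to~$z^L$, the other to~$y^R_2$), this reduces everything to a short case distinction on which of the routes of \cref{fig:adp:gadget_paths} the at most four paths through $\gadget{uv}$ take.

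With these facts in hand I would, in each case, either reroute the part of~$Q$ inside $\gadget{uv}$ directly onto a route ending at $h'_1$ or $h'_2$ followed by the arc to~$t$ (possible whenever no $\cQ$-path blocks it), or, when the candidate route is blocked, swap tails with the blocking \emph{vertex} path~$B$ at the last interior vertex of $\gadget{uv}$ shared by $Q$ and~$B$: $Q$ takes over $B$'s auxiliary exit and runs straight to~$t$, while $B$ inherits the route that $Q$ used to take through $\gadget{uv}$ together with $Q$'s former vertex-output tail. In either case the resulting family is again almost disjoint — the finitely many arc-disjointness checks against the at most four paths of~$\cQ$ meeting $\gadget{uv}$ and against the paths on the displaced tail all go through by the observations above — it has the same number of auxiliary paths as~$\cQ$, and it has strictly fewer auxiliary paths leaving their first gadget through a vertex output, because $Q$ has been fixed and only a vertex path (to which the property does not apply) was otherwise touched. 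This contradicts the minimality of~$\cQ$, completing the proof.

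I expect the main obstacle to be the bookkeeping in the second and third paragraphs: cleanly enumerating the handful of route-configurations inside $\gadget{uv}$ consistent with $\cQ$ being almost disjoint, and checking in each that a rerouting or vertex-path swap as above exists and creates no new arc collision — in particular that the blocking path is always a vertex path, which is the point on which the whole argument turns.
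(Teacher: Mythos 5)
Your setup (secondary minimization of the number of offending auxiliary paths, on top of \cref{ass:adp_with_max_aux_paths}) matches the paper, and your observation that any $\cQ$-path blocking the auxiliary exit must be a vertex path is correct and is also the paper's key local fact. The gap is in the repair step when the vertex output through which $Q$ leaves $\gadget{uv}$ leads into \emph{another gadget} rather than directly to $t$. There your fix is to hand $Q$'s old tail (the part from the vertex output onward) to the blocking vertex path $B$. But that tail may legitimately share one arc with some \emph{other} vertex path $R\in\cQ$ (e.g.\ both use the arc $z^Lz^R$ of a later gadget), which was fine while the tail belonged to the auxiliary path $Q$; once the tail is carried by the vertex path $B'$, however, $B'$ shares with $R$ both that tail arc and the arc $sv_V$, so the modified family is no longer almost disjoint. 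Your claim that the disjointness checks ``against the paths on the displaced tail all go through by the observations above'' is exactly the point that fails: none of your observations controls interactions between the displaced tail and vertex paths outside $\gadget{uv}$. (A related slip feeds this: after leaving via $u'$ or $v'$ the path need \emph{not} run along $P_u$ or $P_v$ to $t$; it can switch exits in later gadgets, so the tail is not as innocuous as your write-up suggests.) The unblocked reroute, and the swap in the case where the vertex output points directly to $t$, are fine and coincide with the paper's second case.

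The paper avoids the problematic swap by never re-attaching the tail to a vertex path: in the ``output leads to another gadget'' case it splits the auxiliary path into \emph{two} auxiliary paths --- the head continued via $(x^R_1,h'_2,t)$, and the tail given a fresh auxiliary head $(s,v_E,\tilde h_1,\tilde x^L_1)$, which is safe because no other auxiliary path can use $v_E\tilde h_1$ or share an arc with the tail beyond $sv_E$ --- removes one vertex path (one exists, since at most $2m-1$ auxiliary paths are present), and thereby contradicts the \emph{primary} maximality of \cref{ass:adp_with_max_aux_paths} instead of the secondary minimality. To complete your argument you would need either this splitting idea or a separate proof that the displaced tail meets no other vertex path, which does not hold in general.
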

\begin{proof}
	Let $P \in \cQ$ be an auxiliary path, let $\gadget{e}$ be the first gadget it enters, and suppose that~$P$ leaves $\gadget{e}$ via a vertex output~$u'$.
	We assume that $\cQ$ is chosen such that it minimizes the number of such paths (among those sets $\cQ$ satisfying \cref{ass:adp_with_max_aux_paths}).
	By the construction of the graph, there is a single arc leaving~$u'$.
	This arc either points to the vertex input of another gadget or to the target.

	We first consider the case that the arc leaving~$u'$ points to a vertex input~$\tilde{u}$ of another gadget~$\gadget{\tilde{e}}$.
	In this situation, the path~$P$ enters $\gadget{\tilde{e}}$ via~$\tilde{u}$ directly after leaving~$\gadget{e}$ via~$u'$.
	It is depicted in \cref{fig:adp:aux_path_aux_output}.
	In this case, no auxiliary path leaves $\gadget{e}$ via~$h'_2$ and no auxiliary path enters $\gadget{\tilde{e}}$ via~$\tilde{h}_1$.
	Otherwise, such a path has not only the arc $s v_E$ in common with~$P$, but also either $y^R_1 x^R_1$ or $\tilde{x}^L_1 \tilde{y}^L_1$.
	Thus, we can replace~$P$ in~$\cQ$ by two auxiliary paths:
	one that equals~$P$ until vertex~$x^R_1$ but then continues along $(x^R_1, h'_2, t)$ and the other starting with $(s, v_E, \tilde{h}'_1, \tilde{x}^L_1)$ and following~$P$ from~$\tilde{x}^L_1$ on.
	To preserve the number of paths in~$\cQ$, we remove a vertex path from~$\cQ$ in return.

	Note that the paths from~$\cQ$ remain almost disjoint after this modification, except if there is a vertex path leaving~$\gadget{e}$ via~$h'_2$.
	However, if this is the case we can simple remove this vertex path.
	Also note that~$\cQ$ contains at least one vertex path since $k \geq 0$ and because~$\cQ$ contains at most $2 m - 1$ auxiliary paths:
	$\outdegree{v_E} = 2 m$ and no auxiliary path uses the arc $v_E \tilde{h}_1$.
	Thus, the replacement of~$P$ in~$\cQ$ contradicts \cref{ass:adp_with_max_aux_paths} such that this case cannot occur.

	We now consider the remaining case, in which the arc leaving~$u'$ directly points to the target~$t$.
	In this situation, $P$ ends with $(y^R_1, x^R_1, u', t)$ and we can modify~$P$ by using $(y^R_1, x^R_1, h'_2, t)$ instead.
	As argued in the first case, another path in~$\cQ$ leaving $\gadget{e}$ via~$h'_2$ has to be a vertex path.
	If it exists, we can modify it by changing its end from $(y^R_1, x^R_1, h'_2, t)$ to $(y^R_1, x^R_1, u', t)$.

	Hence, these modifications reduce the number of auxiliary paths leaving a gadget via a vertex output by one, which contradicts the assumption on $\cQ$ we made at the start of the proof.
	\qed
\end{proof}

\begin{figure}[h]
	\centering
	\begin{tikzpicture}[scale=.8]
		\begin{scope}
			\clip (6.8, 2.75) rectangle (12, -2.75);
			\draw[contourLine] (0,2) -- (11,2) -- (11,-2) -- (0,-2) -- cycle;
	
			\coordinate (cw1) at (1,1);
			\coordinate (cw2) at (1,-1);
			\coordinate (cx1) at (3,1);
			\coordinate (cx2) at (3,-1);
			\coordinate (cy1) at (4.5,0);
			\coordinate (cy2) at (6.5,0);
			\coordinate (cz1) at (8,1);
			\coordinate (cz2) at (8,-1);
			\coordinate (ca1) at (10,1);
			\coordinate (ca2) at (10,-1);
			\coordinate (cup) at (11,1);
			\coordinate (cvp) at (11,-1);
			\coordinate (chp2) at (10,2);
			\coordinate (chp1) at (10,-2);
	
			\node[stdVertex] (w1) at (cw1) [label=below: $x^L_1$] {};
			\node[stdVertex] (w2) at (cw2) [label=above: $x^L_2$] {};
			\node[stdVertex] (x1) at (cx1) [label=below left: $y^L_1$] {};
			\node[stdVertex] (x2) at (cx2) [label=above left: $y^L_2$] {};
			\node[stdVertex] (y1) at (cy1) [label=above right: $z^L$] {};
			\node[stdVertex] (y2) at (cy2) [label=above left: $z^R$] {};
			\node[stdVertex] (z1) at (cz1) [label=below right: $y^R_1$] {};
			\node[stdVertex] (z2) at (cz2) [label=above right: $y^R_2$] {};
			\node[stdVertex] (a1) at (ca1) [label=below: $x^R_1$] {};
			\node[stdVertex] (a2) at (ca2) [label=above: $x^R_2$] {};
			\node[arrowVertex] (up) at (cup) [label=above right: $u'$] {};
			\node[arrowVertex] (vp) at (cvp) [label=below right: $v'$] {};
			\node[arrowVertex, rotate=-90] (hp1) at (chp1) [label=right: $h'_1$] {};
			\node[arrowVertex, rotate=90] (hp2) at (chp2) [label=right: $h'_2$] {};
	
			\draw[->]
				(w1) edge (x1)
				(w2) edge (x2)
				(x1) edge (y1) edge [bend left=40pt] (z2)
				(x2) edge (y1) edge [bend right=40pt] (z1)
				(y1) edge (y2)
				(y2) edge (z1) edge (z2)
				(z1) edge (a1)
				(z2) edge (a2)
				(a1) edge[-] (hp2) edge[-] (up)
				(a2) edge[-] (hp1) edge[-] (vp);
		\end{scope}
	
		\begin{scope}
			\clip (13, 2.75) rectangle (18.2, -2.75);
			\draw[contourLine] (14,2) -- (25,2) -- (25,-2) -- (14,-2) -- cycle;
	
			\coordinate (cur) at (14,1);
			\coordinate (cvr) at (14,-1);
			\coordinate (ch1r) at (15,2);
			\coordinate (ch2r) at (15,-2);
			\coordinate (cw1r) at (15,1);
			\coordinate (cw2r) at (15,-1);
			\coordinate (cx1r) at (17,1);
			\coordinate (cx2r) at (17,-1);
			\coordinate (cy1r) at (18.5,0);
			\coordinate (cy2r) at (20.5,0);
			\coordinate (cz1r) at (22,1);
			\coordinate (cz2r) at (22,-1);
			\coordinate (ca1r) at (24,1);
			\coordinate (ca2r) at (24,-1);
	
			\node[arrowVertex] (u) at (cur) [label=above left: $\tilde{u}$] {};
			\node[arrowVertex] (v) at (cvr) [label=below left: $\tilde{v}$] {};
			\node[arrowVertex, rotate=-90] (h1) at (ch1r) [label=left: $\tilde{h}_1$] {};
			\node[arrowVertex, rotate=90] (h2) at (ch2r) [label=left: $\tilde{h}_2$] {};
			\node[stdVertex] (w1) at (cw1r) [label=below: $\tilde{x}^L_1$] {};
			\node[stdVertex] (w2) at (cw2r) [label=above: $\tilde{x}^L_2$] {};
			\node[stdVertex] (x1) at (cx1r) [label=below left: $\tilde{y}^L_1$] {};
			\node[stdVertex] (x2) at (cx2r) [label=above left: $\tilde{y}^L_2$] {};
			\node[stdVertex] (y1) at (cy1r) [label=above right: $\tilde{z}^L$] {};
			\node[stdVertex] (y2) at (cy2r) [label=above left: $\tilde{z}^R$] {};
			\node[stdVertex] (z1) at (cz1r) [label=below right: $\tilde{y}^R_1$] {};
			\node[stdVertex] (z2) at (cz2r) [label=above right: $\tilde{y}^R_2$] {};
			\node[stdVertex] (a1) at (ca1r) [label=below: $\tilde{x}^R_1$] {};
			\node[stdVertex] (a2) at (ca2r) [label=above: $\tilde{x}^R_2$] {};
	
			\draw[->] (u.center) edge (w1)
				(h1.center) edge (w1)
				(v.center) edge (w2)
				(h2.center) to (w2)
				(w1) edge (x1)
				(w2) edge (x2)
				(x1) edge (y1) edge [bend left=40pt] (z2)
				(x2) edge (y1) edge [bend right=40pt] (z1)
				(y1) edge (y2)
				(y2) edge (z1) edge (z2)
				(z1) edge (a1)
				(z2) edge (a2);
		\end{scope}
	
		\draw[-] (up.center) edge (u);
	
		\begin{scope}[on background layer]
			\draw[pathMarker1, line cap=round] (cz1) -- (ca1) -- (cup) -- (cur) -- (cw1r) -- (cx1r);
		\end{scope}
	\end{tikzpicture}	\caption[A path using a vertex output.]{A path that leaves $\gadget{e}$ via its vertex output~$u'$ and enters $\gadget{\tilde{e}}$ via its vertex input~$\tilde{u}$ uses at least the red marked arcs.}
	\label{fig:adp:aux_path_aux_output}
\end{figure}
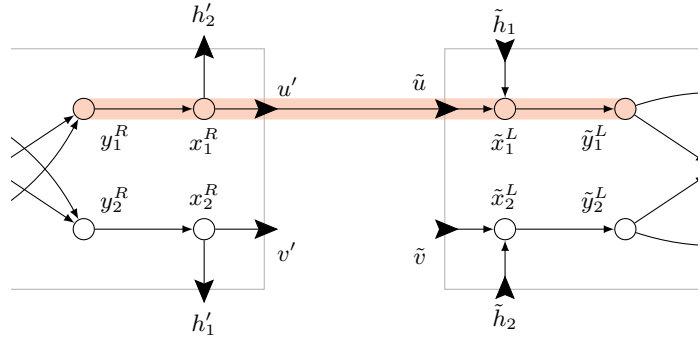

\begin{lemma} \label{lem:adp:vertex_path_correct_vertex_output}
	Let $\gadget{e}$ be a gadget that is passed through by exactly one vertex path~$P$ from~$\cQ$.
	If $P$ enters $\gadget{e}$ via a vertex input~$u$, it leaves $\gadget{e}$ via the corresponding vertex output~$u'$.
\end{lemma}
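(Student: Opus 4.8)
The plan is to trace the possible routes through the gadget and eliminate every exit other than $u'$, using \cref{lem:adp:aux_path_aux_output} and \cref{ass:adp_with_max_aux_paths}. A path entering $\gadget{uv}$ through $u$ is forced along $u \to x^L_1 \to y^L_1$, and from $y^L_1$ it continues either directly to $y^R_2$ or via $z^L \to z^R$ to one of $y^R_1, y^R_2$; hence $P$ can only leave $\gadget{uv}$ through $u'$, $v'$, $h'_1$, or $h'_2$, and it suffices to rule out the last three.

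Suppose first that $P$ leaves via an auxiliary output, say $h'_1$, and hence runs straight to $t$. By comparing routes inside the gadget, one checks that every auxiliary path of $\cQ$ that could traverse $\gadget{uv}$ while exiting through $h'_1$, or entering through $h_1$, would share at least two arcs with $P$; so by \cref{lem:adp:aux_path_aux_output} the arcs $v_E h_1$ and $h'_1 t$ of $\gadget{uv}$ are unused by $\cQ$, and the only auxiliary path of $\cQ$ that can meet $\gadget{uv}$ at all is internally disjoint from $P$. I would then delete $P$ and insert the auxiliary path $s \to v_E \to h_1 \to x^L_1 \to \dots \to x^R_2 \to h'_1 \to t$ that follows $P$ from $x^L_1$ on; since $P$ was the unique vertex path through $\gadget{uv}$, the new path is almost disjoint from the rest of $\cQ$, the total number of paths is unchanged, and — because $\cQ$ still contained the vertex path $P$, hence at most $2m-1$ auxiliary paths — the number of auxiliary paths strictly increased, contradicting \cref{ass:adp_with_max_aux_paths}.

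It remains to exclude the exit through the \emph{wrong} vertex output $v'$. Then $P$ reaches $x^R_2$ and takes $x^R_2 \to v'$ instead of the available $x^R_2 \to h'_1$, and alternatively, going through $z^L \to z^R \to y^R_1 \to x^R_1$, it could leave through $h'_2$. I would reroute $P$ inside $\gadget{uv}$ and discard its tail after $v'$ so that it exits through $h'_1$ or $h'_2$ and runs straight to $t$, landing in the previous case (note that $P$ stays a vertex path through $\gadget{uv}$). The choice between $h'_1$ and $h'_2$ is dictated by which auxiliary paths of $\cQ$ pass through $\gadget{uv}$: any two auxiliary paths through $\gadget{uv}$ both use $s v_E$, so they cannot also both use $z^L z^R$ (nor both enter via the same auxiliary input), which leaves at most one of the two blocking auxiliary paths present and hence always a safe reroute; the same swap as before then contradicts \cref{ass:adp_with_max_aux_paths}.

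The step I expect to be the main obstacle is this last verification: checking that re-routing $P$ through an auxiliary output and cutting its tail never produces a double overlap with another path of $\cQ$ — the delicate point being auxiliary paths, which already share the arc $s v_E$ with the inserted path of the previous step, so that even one extra common arc inside $\gadget{uv}$ would be fatal. This is exactly where the hypotheses that $\gadget{uv}$ carries a single vertex path and that auxiliary paths behave as in \cref{lem:adp:aux_path_aux_output} are needed, in order to reduce the auxiliary paths that can meet $\gadget{uv}$ to a short explicit list and to show that the reroute can dodge all of them.
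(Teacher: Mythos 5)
Your overall strategy mirrors the paper's (rule out an exit through an auxiliary output and through the wrong vertex output by exchanging paths to gain an auxiliary path, contradicting \cref{ass:adp_with_max_aux_paths}), but the execution of your first case has a genuine gap. While the ``exiting through $h'_1$'' half of your claim is fine, the claim that every auxiliary path of $\cQ$ \emph{entering} $\gadget{e}$ through $h_1$ must share at least two arcs with $P$ (and hence that $v_E h_1$ is unused by $\cQ$) is false. Concretely, if $P$ runs $u, x^L_1, y^L_1, y^R_2, x^R_2, h'_1, t$ (using the bend arc $y^L_1 y^R_2$) and $\cQ$ contains the auxiliary path $Q = s, v_E, h_1, x^L_1, y^L_1, z^L, z^R, y^R_1, x^R_1, h'_2, t$, then $P$ and $Q$ share only the arc $x^L_1 y^L_1$, so this configuration is consistent with almost disjointness and with \cref{lem:adp:aux_path_aux_output}. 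In it your one-for-one swap breaks down: the inserted path $s, v_E, h_1, x^L_1, \dots, x^R_2, h'_1, t$ shares $s v_E$, $v_E h_1$, $h_1 x^L_1$, and $x^L_1 y^L_1$ with $Q$, so the modified family is not almost disjoint; worse, no single auxiliary path through $\gadget{e}$ can be added while $Q$ is kept, since any candidate route shares some gadget arc with $Q$ in addition to $s v_E$. Your second case only reroutes $P$ into the first case (and in the configuration just described the only ``safe'' reroute is exactly the one exiting via $h'_1$), so it inherits the same flaw.

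The repair is the paper's move rather than a one-for-one swap: show that whenever $P$ leaves $\gadget{e}$ via an auxiliary output (or via $v'$), at most one auxiliary path of $\cQ$ passes through $\gadget{e}$; then remove \emph{all} paths through $\gadget{e}$ --- $P$ together with that at most one auxiliary path --- and insert the same number of fresh auxiliary paths along the two arc-disjoint bend routes $h_1, x^L_1, y^L_1, y^R_2, x^R_2, h'_1$ and $h_2, x^L_2, y^L_2, y^R_1, x^R_1, h'_2$. This keeps $|\cQ| = 2m+k$ and almost disjointness (the new paths meet the rest of $\cQ$ only in $s v_E$), and it increases the number of auxiliary paths by one, contradicting \cref{ass:adp_with_max_aux_paths}.
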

\begin{proof}
	If $P$ leaves $\gadget{e}$ via an auxiliary output, \cref{lem:adp:aux_path_aux_output} implies that $\cQ$ contains at most one auxiliary path passing through~$\gadget{e}$.
	In this case, we can remove all paths passing through $\gadget{e}$ from~$\cQ$ and replace them by the same number of auxiliary paths passing through~$\gadget{e}$.
	This increases the number of auxiliary paths in~$\cQ$ contradicting \cref{ass:adp_with_max_aux_paths}.

	Next, suppose that $P$ leaves $\gadget{e}$ via the vertex output~$v'$.
	In this case, it must definitely use the arcs that are marked in red in \cref{fig:adp:vertex_path_correct_vertex_output}.
	The only chance for an almost disjoint auxiliary path~$P'$ entering $\gadget{e}$ via~$h_1$ is to use the $h_1$-$h'_2$-path whose arcs are green in \cref{fig:adp:vertex_path_correct_vertex_output}.
	However, every other auxiliary path entering $\gadget{e}$ must share an arc of $\gadget{e}$ with~$P'$.
	Thus, they are not almost disjoint and $\cQ$ contains again at most one auxiliary path passing through~$\gadget{e}$.
	As in the first case we can replace all paths through~$\gadget{e}$, thereby increasing the number of auxiliary paths in~$\cQ$, and again contradicting \cref{ass:adp_with_max_aux_paths}.
	\qed
\end{proof}

\begin{figure}
	\centering
	\begin{tikzpicture}[scale=.8]
		\clip (-1,-2.75) rectangle (12,2.75);
		\draw[contourLine] (0,2) -- (11,2) -- (11,-2) -- (0,-2) -- cycle;
	
		\coordinate (cu) at (0,1);
		\coordinate (cv) at (0,-1);
		\coordinate (ch1) at (1,2);
		\coordinate (ch2) at (1,-2);
		\coordinate (cw1) at (1,1);
		\coordinate (cw2) at (1,-1);
		\coordinate (cx1) at (3,1);
		\coordinate (cx2) at (3,-1);
		\coordinate (cy1) at (4.5,0);
		\coordinate (cy2) at (6.5,0);
		\coordinate (cz1) at (8,1);
		\coordinate (cz2) at (8,-1);
		\coordinate (ca1) at (10,1);
		\coordinate (ca2) at (10,-1);
		\coordinate (cup) at (11,1);
		\coordinate (cvp) at (11,-1);
		\coordinate (chp2) at (10,2);
		\coordinate (chp1) at (10,-2);
	
		\draw[pathMarker1]
			(cu) -- (cw1)
			(ca2) -- (cvp);
		\draw[pathMarker1, line cap=round]
			(cw1) -- (cx1)
			(cz2) -- (ca2);
	
		\node[arrowVertex] (u) at (cu) [label=left: $u$] {};
		\node[arrowVertex] (v) at (cv) [label=left: $v$] {};
		\node[arrowVertex, rotate=-90] (h1) at (ch1) [label=left: $h_1$] {};
		\node[arrowVertex, rotate=90] (h2) at (ch2) [label=left: $h_2$] {};
		\node[stdVertex] (w1) at (cw1) [label=below: $x^L_1$] {};
		\node[stdVertex] (w2) at (cw2) [label=above: $x^L_2$] {};
		\node[stdVertex] (x1) at (cx1) [label=below left: $y^L_1$] {};
		\node[stdVertex] (x2) at (cx2) [label=above left: $y^L_2$] {};
		\node[stdVertex] (y1) at (cy1) [label=above right: $z^L$] {};
		\node[stdVertex] (y2) at (cy2) [label=above left: $z^R$] {};
		\node[stdVertex] (z1) at (cz1) [label=below right: $y^R_1$] {};
		\node[stdVertex] (z2) at (cz2) [label=above right: $y^R_2$] {};
		\node[stdVertex] (a1) at (ca1) [label=below: $x^R_1$] {};
		\node[stdVertex] (a2) at (ca2) [label=above: $x^R_2$] {};
		\node[arrowVertex] (up) at (cup) [label=right: $u'$] {};
		\node[arrowVertex] (vp) at (cvp) [label=right: $v'$] {};
		\node[arrowVertex, rotate=-90] (hp1) at (chp1) [label=right: $h'_1$] {};
		\node[arrowVertex, rotate=90] (hp2) at (chp2) [label=right: $h'_2$] {};
	
		\draw[->] (u.center) edge (w1)
			(v.center) edge (w2)
			(h2.center) to (w2)
			(w1) edge [ultra thick, color=pathColor3] (x1)
			(w2) edge (x2)
			(x1) edge [ultra thick, color=pathColor3] (y1) edge [bend left=40pt] (z2)
			(x2) edge (y1) edge [bend right=40pt] (z1)
			(y1) edge [ultra thick, color=pathColor3] (y2)
			(y2) edge [ultra thick, color=pathColor3] (z1) edge (z2)
			(z1) edge [ultra thick, color=pathColor3] (a1)
			(z2) edge (a2)
			(a1) edge [-, ultra thick, color=pathColor3] (hp2) edge[-] (up)
			(a2) edge[-] (hp1) edge [-](vp);
	
		\draw[->,ultra thick, color=pathColor3] (h1.center) to (w1);

		\node[arrowVertex, rotate=-90] (h1) at (ch1) [label=left: $h_1$] {};
		\node[arrowVertex, rotate=90] (h2) at (ch2) [label=left: $h_2$] {};
		\node[arrowVertex, rotate=-90] (hp1) at (chp1) [label=right: $h'_1$] {};
		\node[arrowVertex, rotate=90] (hp2) at (chp2) [label=right: $h'_2$] {};
	\end{tikzpicture}	\caption[A vertex path using the ``wrong'' vertex output.]{%
		A vertex path entering a gadget $\gadget{e}$ via a vertex input~$u$ and leaving it via the ``wrong'' vertex output~$v'$ has to use at least the red marked arcs.
		In this situation, an auxiliary path entering $\gadget{e}$ via~$h_1$ must follow the green arcs.
	}
	\label{fig:adp:vertex_path_correct_vertex_output}
\end{figure}
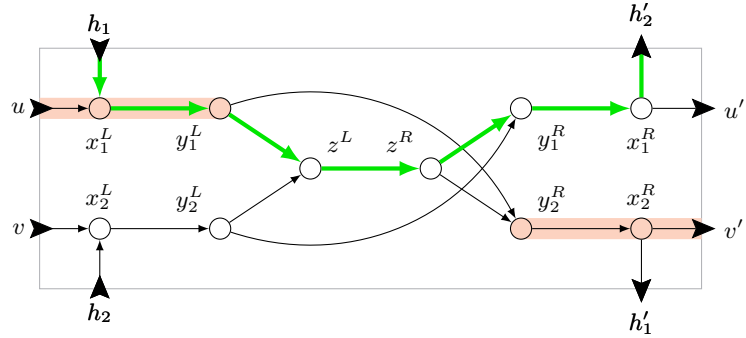

\begin{lemma} \label{lem:adp:no_gadget_with_two_vertex_paths}
	There is no gadget through which two vertex paths of~$\cQ$ pass.
\end{lemma}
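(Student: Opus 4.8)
The plan is a proof by contradiction via the same rerouting idea used for \cref{lem:adp:aux_path_aux_output,lem:adp:vertex_path_correct_vertex_output}: assuming some gadget $g=\gadget{uv}$ carries two vertex paths of $\cQ$, I would keep all paths of $\cQ$ that miss $g$ untouched and reroute the few paths through $g$ so as to obtain a family of $2m+k$ almost disjoint $s$-$t$-paths with strictly more auxiliary paths, contradicting \cref{ass:adp_with_max_aux_paths}.

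First I would fix the local picture. As every vertex path starts with $s v_V$, two vertex paths entering $g$ through the same vertex input would additionally share the unique arc leaving that input (towards $x^L_1$ or $x^L_2$), hence two arcs; so the vertex paths through $g$ use different inputs, say $P_1$ enters through $u$ and $P_2$ through $v$, and no third vertex path fits. Since $P_1$ and $P_2$ already share $s v_V$, their interiors are arc-disjoint, and checking the $3\times 3$ routes through $g$ from $u$ and from $v$ leaves only three admissible pairs; in each of them $P_1$ and $P_2$ \emph{cross}, i.e.\ $P_1$ leaves $g$ through $v'$ or $h'_1$ and $P_2$ through $u'$ or $h'_2$, they use the arcs $x^L_1 y^L_1$ resp.\ $x^L_2 y^L_2$, and at most one of them uses the central arc $z^L z^R$. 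Next I would show that at most one auxiliary path of $\cQ$ touches $g$: by \cref{lem:adp:aux_path_aux_output} such a path has $g$ as its only gadget and thus enters through $h_1$ (forcing it to share exactly $x^L_1 y^L_1$ with $P_1$) or through $h_2$ (sharing $x^L_2 y^L_2$ with $P_2$); in either case its route is pinned down by the requirement not to share a second arc with that vertex path, and one checks that the two forced routes cannot both be present together with $P_1$ and $P_2$.

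The rerouting then has two cases. If no auxiliary path touches $g$, I replace $P_1,P_2$ by the extensions to $s$-$t$-paths of the $h_1$-$h'_1$- and $h_2$-$h'_2$-paths of $g$ (the green and orange paths of \cref{fig:adp:gadget_paths}), keeping $2m+k$ paths and gaining two auxiliary ones. If exactly one auxiliary path $P'$ touches $g$, say through $h_1$ (the case through $h_2$ being symmetric), the forced shape of $P'$ together with the crossing force $P_1$ onto the route $u,x^L_1,y^L_1,y^R_2,x^R_2,\dots$ and $P_2$ to leave $g$ through $u'$ (leaving through $h'_2$ would give $P'$ and $P_2$ two common arcs); I then replace $P_1,P_2,P'$ by the vertex path following $P_1$ up to the input $u$, passing straight through $g$ to $u'$, and then following $P_2$ to $t$, together with the $h_1$-$h'_1$- and $h_2$-$h'_2$-paths of $g$ (again extended to $s$-$t$-paths). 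This exchanges one vertex path and one auxiliary path for one vertex path and two auxiliary paths, again a net gain, and in both cases no auxiliary path outside $g$ is lost, contradicting \cref{ass:adp_with_max_aux_paths}.

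I expect the main obstacle to be checking that these new families are genuinely almost disjoint. For the new auxiliary paths this is routine: each uses only $s v_E$, one arc out of $v_E$, arcs inside $g$, and one arc into $t$, so by \cref{lem:adp:aux_path_aux_output} it meets any surviving auxiliary path only in $s v_E$, while surviving vertex paths avoid $s v_E$ and the interior of $g$ entirely. The delicate point is the spliced vertex path --- a prefix of $P_1$, then a fresh piece inside $g$, then a suffix of $P_2$ --- for which I must exclude that some surviving path shares two of its arcs. This succeeds because the prefix of $P_1$ lies in the gadgets of $u$ before $g$ and the suffix of $P_2$ in the gadgets of $u$ after $g$, which are vertex-disjoint: a surviving vertex path can meet either piece only in $s v_V$, which belongs only to the prefix, and a surviving auxiliary path is confined to a single gadget and hence meets at most one of the two pieces; in every case at most one common arc, as required.
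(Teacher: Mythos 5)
Your proposal is correct and follows essentially the same route as the paper's proof: the forced crossing configuration of the two vertex paths, the observation that at most one (necessarily $h_1$-$h'_2$ or $h_2$-$h'_1$) auxiliary path can traverse the gadget, and the exchange replacing $P_1$, $P_2$ (and the crossing auxiliary path, if present) by the spliced straight $u$-$u'$ vertex path together with the $h_1$-$h'_1$- and $h_2$-$h'_2$-paths, contradicting \cref{ass:adp_with_max_aux_paths}. The only slight imprecision is the claim that the prefix of $P_1$ and the suffix of $P_2$ lie in gadgets of $u$ before and after $g$ -- a vertex path may switch chains at a gadget -- but since gadgets are visited in strictly increasing index order, the prefix and suffix still meet disjoint sets of gadgets, which is all your final almost-disjointness check requires.
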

\begin{proof}
	Suppose there is a gadget~$\gadget{e}$ that is passed through by two vertex paths.
	Denote these paths by $P_u$ and~$P_v$.
	Since they already have the arc $s v_V$ in common, they are disjoint in $\gadget{e}$.
	Hence, the path~$P_u$ entering $\gadget{e}$ via~$u$ has to use $y^R_2 x^R_2$ and the path~$P_v$ entering $\gadget{e}$ via~$v$ has to use $y^R_1 x^R_1$.

	Furthermore, there is also an auxiliary path passing through $\gadget{e}$ because otherwise we could replace one of the two vertex paths by an auxiliary path contradicting \cref{ass:adp_with_max_aux_paths}.
	Similarly to the proof of \cref{lem:adp:vertex_path_correct_vertex_output}, this auxiliary path has to be either an $h_1$-$h'_2$-path or an $h_2$-$h'_1$-path inside~$\gadget{e}$.
	By symmetry we assume without loss of generality that it is an $h_1$-$h'_2$-path.
	Hence, the situation is as depicted in \cref{fig:adp:two_vertex_paths_in_gadget} (the path $P_u$ can leave $\gadget{e}$ either via~$v'$ or via~$h'_1$).

	We now construct a new vertex path~$P$ that replaces $P_u$ and~$P_v$ in~$\cQ$.
	This path first uses $P_u$ until $y^L_1$ in $\gadget{e}$.
	From thereon it uses $(y^L_1, z^L, z^R, y^R_1)$ and then continues like~$P_v$.
	Since all vertex paths are disjoint after $v_V$, the new path~$P$ has only the arc $s v_V$ in common with any of the remaining vertex paths.
	Moreover, it has at most one arc in common with any auxiliary path outside of $\gadget{e}$ since this was already the case for $P_u$ and~$P_v$.
	We also replace the auxiliary $h_1$-$h'_2$-path by an auxiliary $h_1$-$h'_1$-path and an auxiliary $h_2$-$h'_2$-path.
	The resulting paths passing through $\gadget{e}$ are visualized in \cref{fig:adp:two_vertex_paths_in_gadget_after_modification}.

	After these modifications, $\cQ$ contains the same number of paths but the number of auxiliary paths increases by one.
	This contradicts \cref{ass:adp_with_max_aux_paths}.
	\qed
\end{proof}

\begin{figure}[h]
	\centering
	\begin{tikzpicture}[scale=.8]
		\clip (-1,-2.75) rectangle (12,2.75);
		\draw[contourLine] (0,2) -- (11,2) -- (11,-2) -- (0,-2) -- cycle;
	
		\coordinate (cu) at (0,1);
		\coordinate (cv) at (0,-1);
		\coordinate (ch1) at (1,2);
		\coordinate (ch2) at (1,-2);
		\coordinate (cw1) at (1,1);
		\coordinate (cw2) at (1,-1);
		\coordinate (cx1) at (3,1);
		\coordinate (cx2) at (3,-1);
		\coordinate (cy1) at (4.5,0);
		\coordinate (cy2) at (6.5,0);
		\coordinate (cz1) at (8,1);
		\coordinate (cz2) at (8,-1);
		\coordinate (ca1) at (10,1);
		\coordinate (ca2) at (10,-1);
		\coordinate (cup) at (11,1);
		\coordinate (cvp) at (11,-1);
		\coordinate (chp2) at (10,2);
		\coordinate (chp1) at (10,-2);
	
		\draw[pathMarker1]
			(cu) -- (cw1) -- (cx1) to [bend left=40] (cz2);
		\draw[pathMarker1, line cap=round]
			(cz2) -- (ca2);
		\draw[pathMarker2]
			(cv) -- (cw2) -- (cx2) to [bend right=40]
			(cz1) -- (ca1) -- (cup);
	
		\node[arrowVertex] (u) at (cu) [label=left: $u$] {};
		\node[arrowVertex] (v) at (cv) [label=left: $v$] {};
		\node[arrowVertex, rotate=-90] (h1) at (ch1) [label=left: $h_1$] {};
		\node[arrowVertex, rotate=90] (h2) at (ch2) [label=left: $h_2$] {};
		\node[stdVertex] (w1) at (cw1) [label=below: $x^L_1$] {};
		\node[stdVertex] (w2) at (cw2) [label=above: $x^L_2$] {};
		\node[stdVertex] (x1) at (cx1) [label=below left: $y^L_1$] {};
		\node[stdVertex] (x2) at (cx2) [label=above left: $y^L_2$] {};
		\node[stdVertex] (y1) at (cy1) [label=above right: $z^L$] {};
		\node[stdVertex] (y2) at (cy2) [label=above left: $z^R$] {};
		\node[stdVertex] (z1) at (cz1) [label=below right: $y^R_1$] {};
		\node[stdVertex] (z2) at (cz2) [label=above right: $y^R_2$] {};
		\node[stdVertex] (a1) at (ca1) [label=below: $x^R_1$] {};
		\node[stdVertex] (a2) at (ca2) [label=above: $x^R_2$] {};
		\node[arrowVertex] (up) at (cup) [label=right: $u'$] {};
		\node[arrowVertex] (vp) at (cvp) [label=right: $v'$] {};
		\node[arrowVertex, rotate=-90] (hp1) at (chp1) [label=right: $h'_1$] {};
		\node[arrowVertex, rotate=90] (hp2) at (chp2) [label=right: $h'_2$] {};
	
		\draw[->] (u.center) edge (w1)
			(v.center) edge (w2)
			(h2.center) to (w2)
			(w1) edge [ultra thick, color=pathColor3] (x1)
			(w2) edge (x2)
			(x1) edge [ultra thick, color=pathColor3] (y1) edge [bend left=40pt] (z2)
			(x2) edge (y1) edge [bend right=40pt] (z1)
			(y1) edge [ultra thick, color=pathColor3] (y2)
			(y2) edge [ultra thick, color=pathColor3] (z1) edge (z2)
			(z1) edge [ultra thick, color=pathColor3] (a1)
			(z2) edge (a2)
			(a1) edge [-, ultra thick, color=pathColor3] (hp2) edge[-] (up)
			(a2) edge[-] (hp1) edge[-] (vp);
	
		\draw[->,ultra thick, color=pathColor3] (h1.center) to (w1);

		\node[arrowVertex, rotate=-90] (h1) at (ch1) [label=left: $h_1$] {};
		\node[arrowVertex, rotate=90] (h2) at (ch2) [label=left: $h_2$] {};
		\node[arrowVertex, rotate=-90] (hp1) at (chp1) [label=right: $h'_1$] {};
		\node[arrowVertex, rotate=90] (hp2) at (chp2) [label=right: $h'_2$] {};
	\end{tikzpicture}	\caption[Two vertex paths passing through a gadget.]{%
		The two vertex paths $P_u$ (marked red) and $P_v$ (marked blue) and an auxiliary path (drawn with green arcs) from \cref{lem:adp:no_gadget_with_two_vertex_paths} passing through a gadget.
	}
	\label{fig:adp:two_vertex_paths_in_gadget}
\end{figure}
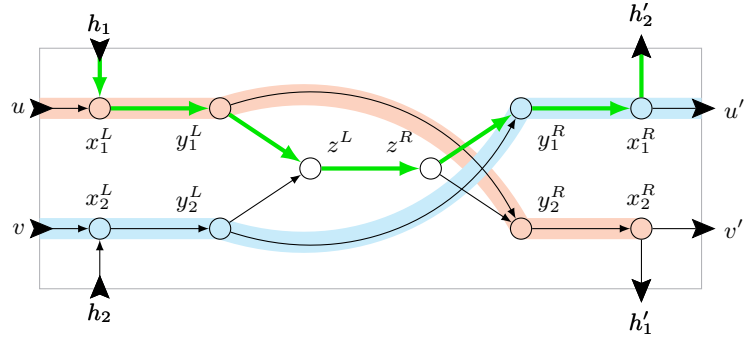
\vspace{-5pt}
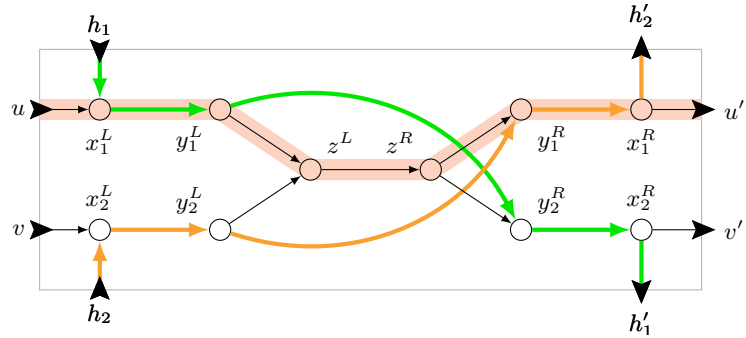
\begin{figure}[h]
	\centering
	\begin{tikzpicture}[scale=.8]
		\clip (-1,-2.75) rectangle (12,2.75);
		\draw[contourLine] (0,2) -- (11,2) -- (11,-2) -- (0,-2) -- cycle;
	
		\coordinate (cu) at (0,1);
		\coordinate (cv) at (0,-1);
		\coordinate (ch1) at (1,2);
		\coordinate (ch2) at (1,-2);
		\coordinate (cw1) at (1,1);
		\coordinate (cw2) at (1,-1);
		\coordinate (cx1) at (3,1);
		\coordinate (cx2) at (3,-1);
		\coordinate (cy1) at (4.5,0);
		\coordinate (cy2) at (6.5,0);
		\coordinate (cz1) at (8,1);
		\coordinate (cz2) at (8,-1);
		\coordinate (ca1) at (10,1);
		\coordinate (ca2) at (10,-1);
		\coordinate (cup) at (11,1);
		\coordinate (cvp) at (11,-1);
		\coordinate (chp2) at (10,2);
		\coordinate (chp1) at (10,-2);
	
		\draw[pathMarker1]
			(cu) -- (cw1) -- (cx1) -- (cy1) -- (cy2) --
			(cz1) -- (ca1) -- (cup);
	
		\node[arrowVertex] (u) at (cu) [label=left: $u$] {};
		\node[arrowVertex] (v) at (cv) [label=left: $v$] {};
		\node[arrowVertex, rotate=-90] (h1) at (ch1) [label=left: $h_1$] {};
		\node[arrowVertex, rotate=90] (h2) at (ch2) [label=left: $h_2$] {};
		\node[stdVertex] (w1) at (cw1) [label=below: $x^L_1$] {};
		\node[stdVertex] (w2) at (cw2) [label=above: $x^L_2$] {};
		\node[stdVertex] (x1) at (cx1) [label=below left: $y^L_1$] {};
		\node[stdVertex] (x2) at (cx2) [label=above left: $y^L_2$] {};
		\node[stdVertex] (y1) at (cy1) [label=above right: $z^L$] {};
		\node[stdVertex] (y2) at (cy2) [label=above left: $z^R$] {};
		\node[stdVertex] (z1) at (cz1) [label=below right: $y^R_1$] {};
		\node[stdVertex] (z2) at (cz2) [label=above right: $y^R_2$] {};
		\node[stdVertex] (a1) at (ca1) [label=below: $x^R_1$] {};
		\node[stdVertex] (a2) at (ca2) [label=above: $x^R_2$] {};
		\node[arrowVertex] (up) at (cup) [label=right: $u'$] {};
		\node[arrowVertex] (vp) at (cvp) [label=right: $v'$] {};
		\node[arrowVertex, rotate=-90] (hp1) at (chp1) [label=right: $h'_1$] {};
		\node[arrowVertex, rotate=90] (hp2) at (chp2) [label=right: $h'_2$] {};
	
		\draw[->] (u.center) edge (w1)
			(v.center) to (w2)
			(w1) edge [ultra thick, color=pathColor3] (x1)
			(w2) edge [ultra thick, color=pathColor4] (x2)
			(x1) edge (y1) edge [bend left=40pt, ultra thick, color=pathColor3] (z2)
			(x2) edge (y1) edge [bend right=40pt, ultra thick, color=pathColor4] (z1)
			(y1) edge (y2)
			(y2) edge (z1) edge (z2)
			(z1) edge [ultra thick, color=pathColor4] (a1)
			(z2) edge [ultra thick, color=pathColor3] (a2)
			(a1) edge [-, ultra thick, color=pathColor4] (hp2) edge[-] (up)
			(a2) edge [-, ultra thick, color=pathColor3] (hp1) edge[-] (vp);
		\draw[->,ultra thick, color=pathColor4] (h2.center) to (w2);
		\draw[->,ultra thick, color=pathColor3] (h1.center) to (w1);

		\node[arrowVertex, rotate=-90] (h1) at (ch1) [label=left: $h_1$] {};
		\node[arrowVertex, rotate=90] (h2) at (ch2) [label=left: $h_2$] {};
		\node[arrowVertex, rotate=-90] (hp1) at (chp1) [label=right: $h'_1$] {};
		\node[arrowVertex, rotate=90] (hp2) at (chp2) [label=right: $h'_2$] {};
	\end{tikzpicture}	\caption[Modified vertex paths passing through a gadget.]{%
		The result of the modifications in \cref{lem:adp:no_gadget_with_two_vertex_paths}.
		The two vertex paths and the auxiliary path from \cref{fig:adp:two_vertex_paths_in_gadget} are replaced by the vertex path marked red and the two auxiliary paths drawn in green and orange.
	}
	\label{fig:adp:two_vertex_paths_in_gadget_after_modification}
\end{figure}

Using \cref{lem:adp:aux_path_aux_output,lem:adp:vertex_path_correct_vertex_output,lem:adp:no_gadget_with_two_vertex_paths} we are now able to prove in \cref{lem:adp:number_almost_disjoint_paths_large_ind_set} the claim opposite to \cref{lem:adp:size_ind_set_many_almost_disjoint_paths} and thus complete the proof that \ADP is \NP-complete.

\begin{lemma} \label{lem:adp:number_almost_disjoint_paths_large_ind_set}
	Given $2 m + k$ almost disjoint $s$-$t$-paths in~$G$, there is an independent set $U \subseteq V_H$ in~$H$ of size $|U| = k$.
\end{lemma}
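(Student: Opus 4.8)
The plan is to use the three structural lemmas to extract from $\cQ$ at least $k$ vertex paths starting at pairwise distinct vertices of $V_H$, and to show that (any $k$ of) these start-vertices form an independent set. Every path in $\cQ$ is either a vertex path or an auxiliary path by \cref{def:adp:vertex_path}. First I would bound the number of auxiliary paths: an auxiliary path begins with the arc $s v_E$, and since every arc leaving $v_E$ enters an auxiliary input of some gadget, it enters a gadget immediately afterwards; by \cref{lem:adp:aux_path_aux_output} it then leaves this (first) gadget via an auxiliary output directly to~$t$, so it uses exactly one of the $\outdegree{v_E} = 2m$ arcs leaving $v_E$. Two distinct auxiliary paths share $s v_E$ and therefore no further arc, hence they use distinct arcs out of $v_E$. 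Consequently $\cQ$ contains at most $2m$ auxiliary paths and thus at least $k$ vertex paths.

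Next I would observe that distinct vertex paths start at distinct vertices of $V_H$: a vertex path begins $(s, v_V, u)$ for some $u \in V_H$, and two such paths with the same $u$ would share the two arcs $s v_V$ and $v_V u$. So $\cQ$ contains vertex paths $P_{u_1}, \dots, P_{u_r}$ with $r \geq k$ and $u_1, \dots, u_r \in V_H$ pairwise distinct. I then claim each $P_{u_i}$ traverses $\gadget{e}$ for every $e \in \incidentedges[H]{u_i}$. Indeed, from $u_i$ the unique outgoing arc enters the first incident gadget through its vertex input labeled~$u_i$; and whenever $P_{u_i}$ enters an incident gadget via the vertex input~$u_i$, that gadget is passed through by exactly one vertex path (at most one by \cref{lem:adp:no_gadget_with_two_vertex_paths}, and at least one since $P_{u_i}$ passes through it), so by \cref{lem:adp:vertex_path_correct_vertex_output} the path leaves via the vertex output~$u_i'$, whose unique outgoing arc leads to the vertex input~$u_i$ of the next incident gadget (or to~$t$). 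The claim follows by induction along the chain of incident gadgets of~$u_i$.

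Finally, let $U$ be any $k$-element subset of $\{u_1, \dots, u_r\}$; then $|U| = k$. If $u, u' \in U$ were adjacent in~$H$, say via the edge~$e$, then $e$ is incident to both $u$ and~$u'$, so both $P_u$ and $P_{u'}$ pass through $\gadget{e}$, contradicting \cref{lem:adp:no_gadget_with_two_vertex_paths}. Hence $U$ is an independent set of size~$k$ in~$H$, which proves the lemma; together with \cref{lem:adp:size_ind_set_many_almost_disjoint_paths} this completes the proof of \cref{thm:adp_np-complete}. I expect the only genuinely delicate point to be the counting in the first step --- verifying that each auxiliary path consumes exactly one arc out of~$v_E$ --- but this is immediate from \cref{lem:adp:aux_path_aux_output} together with the fact that the out-neighbourhood of~$v_E$ consists solely of gadget auxiliary inputs.
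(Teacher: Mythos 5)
Your proof is correct and follows essentially the same route as the paper, relying on the same three structural lemmas (applied to a set $\cQ$ maximizing the number of auxiliary paths) to show that vertex paths traverse exactly the gadgets of their start vertex's incident edges, so no two start vertices can be adjacent. The only difference is cosmetic: the paper argues $\cQ$ contains exactly $2m$ auxiliary and hence exactly $k$ vertex paths, whereas you bound the auxiliary paths by $2m$ via the arcs leaving $v_E$ and simply take $k$ of the at-least-$k$ distinct start vertices, which is a slight simplification of the final counting step.
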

\begin{proof} ~
	We choose a set~$\cQ$ of $2 m + k$ almost disjoint $s$-$t$-paths in~$G$ that fulfills \cref{ass:adp_with_max_aux_paths}.
	By \cref{lem:adp:aux_path_aux_output} we also assume that every auxiliary path in~$\cQ$ passes through exactly one gadget.

	We define $U \subseteq V_H$ to be the set of vertices that are contained in a vertex path of~$\cQ$.
	We first prove that $U$ is an independent set in~$H$.

	\Cref{lem:adp:no_gadget_with_two_vertex_paths} implies that for every gadget there is at most one vertex path in~$\cQ$ that passes through this gadget.
	If this is the case, \cref{lem:adp:vertex_path_correct_vertex_output} states that this vertex path enters the gadget via a vertex input~$u$ and leaves it via the corresponding vertex output~$u'$.
	Thus, a vertex path starting with $(s, v_V, u)$ passes through a gadget $\gadget{e}$ if and only if $e \in \incidentedges[H]{u}$.
	Because no two vertex paths from~$\cQ$ pass through the same gadget, we obtain that $U$ is indeed an independent set.

	We complete the proof by showing that $U$ contains $k$ elements.
	Every gadget is used by at most one vertex path, see~\cref{lem:adp:no_gadget_with_two_vertex_paths}.
	Moreover, such a vertex path leaves every gadget via the correct vertex output, see~\cref{lem:adp:vertex_path_correct_vertex_output} again.
	Thus, we can additionally choose two auxiliary paths passing through every gadget.
	Furthermore, since $\cQ$ fulfills \cref{ass:adp_with_max_aux_paths}, it contains $2 m$ auxiliary paths.
	And since there can only be at most $2 m$ almost disjoint auxiliary paths, $\cQ$ contains exactly $k$ vertex paths.
	Because these are also almost disjoint, they contain distinct vertices $u \in V_H$ showing $|U| = k$.
	\qed
\end{proof}

Now that we have shown \ADP to be \NP-complete in general, we wish to remark that allowing paths to have more edges in common does not make the problem easier.
\begin{remark}
	\label{rem:adp:more_common_arcs}
	Let $l\in\NN$ with $l\geq 1$.
	Regard the following relaxation of \ADP:
	given a directed graph~$G = (V, A)$ together with two designated vertices $s, t \in V$ and a natural number $k \in \NN$.
	Are there $k$ $s$-$t$-paths such that any two of them have at most $l$ arcs in common?

	This problem is also \NP-hard, which can be seen by reducing \ADP to it.
	For this, we augment the graph $G$ of an \ADP instance by adding vertices $v_1,\ldots,v_{l-1}$ and arcs $v_1v_2,\ldots, v_{l-2}v_{l-1}$ and $v_{l-1}s$.
	In the resulting graph $G'$ any $v_1$-$t$-path first uses the $l-1$ newly added arcs.
	Therefore, $k$ almost disjoint $s$-$t$ paths in $G$ correspond to $k$ $v_1$-$t$ paths in $G'$ that have at most $l$ arcs in common and vice versa.
	This shows \NP-hardness.
\end{remark}

\section{Separating by Forbidden Pairs} \label{sec:sfp}
% !TeX spellcheck = en_US
In this section, we prove \SFP to be \SigmaTwoP-complete yielding our third theorem.%
\sfpstwop*
\SFP is contained in $\SigmaTwoP = \NP^{\NP}$ since it can be solved by a non-deter\-ministic Turing machine that has access to an oracle for the \NP-complete \cite{GMO76} path avoiding forbidden pairs problem (cf.\ \cite[Remark~5.16]{AB09}).
In the remainder of this section we reduce the \SigmaTwoP-complete problem \SigmaTwoSat to \SFP proving its \SigmaTwoP-hardness.

\subsection*{The Problem \texorpdfstring{\SigmaTwoSat}{Sigma2-SAT}}

An instance of \SigmaTwoSat is given by a Boolean formula~$\varphi(x, y)$ depending on two types of variables.
The question is, whether an assignment of the $x$-variables exists such that~$\varphi(x, y)$ is \true for every assignment of the $y$-variables.
This problem, sometimes also denoted by \QSAT, is a standard \SigmaTwoP-complete problem, see \cite[Theorem~17.10]{Pap94} or \cite[Section~2.2.1]{dHaa19}.
We first introduce some notation that we use in order to deal with this problem.

\begin{notation} \label{not:sigma2sat}
	The Boolean formula~$\varphi = \varphi(x, y)$ depends on $n_x$ many $x$-variables $X = \{x_1, \dots, x_{n_x}\}$ and on $n_y$ many $y$-variables $Y = \{y_1, \dots, y_{n_y}\}$ whose union we denote by $Z = X \cup Y$.
	A \emph{truth assignment} $T \colon Z \to \{0, 1\}$ assigns a Boolean value to every variable.
	If we are only interested in the assignments of $x$-\ or $y$-variables, we write $T_X \colon X \to \{0, 1\}$ as well as $T_Y \colon Y \to \{0, 1\}$ and identify $T$ with $(T_X, T_Y)$, where $T_X = \restrict{T}{X}$ and $T_Y = \restrict{T}{Y}$.
\end{notation}

We say that the instance~$\varphi$ is \emph{satisfiable} if an $x$-variable assignment~$T_X$ exists such that $\varphi$ evaluates to \true for every $y$-variable assignment~$T_Y$.

\subsection*{Outline of the \texorpdfstring{\SigmaTwoP}{Sigma2P}-Hardness Proof}

To prove the hardness of \SFP, we construct a directed acyclic graph~$G$ for such a Boolean formula~$\varphi$.
For carefully chosen $k \in \NN$ we show that a source~$s$ and a target~$t$ in~$G$ can be separated by a set $\cA$ of $k$~forbidden pairs if and only if the \SigmaTwoSat instance~$\varphi$ is satisfiable.

In this graph~$G$, most separating pairs are predetermined.
Those that are not have essentially two options, which are used to encode assignments of the $x$-variables.
This means that an assignment~$T_X$ of the $x$-variables corresponds to a selection of forbidden pairs~$\cA$ and vice versa.
An assignment~$T_Y$ of the $y$-variables will correspond to $s$-$t$-paths in the graph that contain a pair from~$\cA$ if and only if the assignment $T = (T_X, T_Y)$ satisfies a clause.
From this we conclude that an assignment~$T_X$ exists such that $\varphi$ evaluates to \true for all assignments~$T_Y$ if and only if there exists a small set~$\cA$ such that every $s$-$t$-path contains a pair from~$\cA$.
However, the construction of the graph also generates $s$-$t$-paths that do not correspond to any $y$-variable assignment~$T_Y$.
To make the argumentation work, we have to enforce that all these paths contain forbidden pairs.

In the following, we start with non-restrictive assumptions about the Boolean formula~$\varphi$.
Thereafter, we introduce the different gadgets and concepts required for the final \SigmaTwoP-hardness proof.

\subsection*{Assumptions and Assignments}

Without loss of generality we may assume that the Boolean formula~$\varphi$ is given in \DNF[3], that is, in disjunctive normal form where each clause contains exactly three literals, see \cite[Section~2.2.1]{dHaa19}.
Hence, we can write $\varphi = C_1 \vee \dots \vee C_m$ as a disjunction of $m$ clauses where each clause is the conjunction of three literals.

\begin{assumption} \label{ass:sfp_hardness-formula-3dnf}
	The Boolean formula~$\varphi$ is given in \DNF[3].
\end{assumption}

Let us consider a clause consisting entirely of $x$-variables.
If it contains a variable $x_i$ and its negation $\overline{x_i}$, the clause can never be fulfilled and we can remove it.
Otherwise, we can satisfy this clause (and with it the entire formula~$\varphi$) solely by an appropriate $x$-variable assignment.
Hence, we might also assume that every clause contains at least one $y$-variable.

\begin{assumption} \label{ass:sfp_hardness-no_clause_only_x_vars}
	No clause of $\varphi$ consists entirely of $x$-variables.
\end{assumption}

\pagebreak[2]

Our last assumption is that no variable is contained in a single clause only.
This can be guaranteed, for example, by duplicating all clauses.

\begin{assumption} \label{ass:sfp_hardness-variable_in_at_least_2_clauses}
	Every variable is contained in at least two clauses of~$\varphi$.
\end{assumption}

\Cref{ass:sfp_hardness-formula-3dnf,ass:sfp_hardness-no_clause_only_x_vars} directly imply the following lemma.

\begin{lemma} \label{lem:sfp_hardness-1-2-3-y_vars_in_clause}
	Every clause contains either one, two, or three $y$-variables.
\end{lemma}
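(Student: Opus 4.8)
The plan is to derive the statement directly from \cref{ass:sfp_hardness-formula-3dnf,ass:sfp_hardness-no_clause_only_x_vars} by a short counting argument, so no new construction or gadget is needed here.

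First I would fix an arbitrary clause $C_j$ of $\varphi$. By \cref{ass:sfp_hardness-formula-3dnf} the formula is in \DNF[3], so $C_j$ is a conjunction of exactly three literals. Since each literal refers to a single variable, the set of variables occurring in $C_j$ has size at most three; in particular $C_j$ contains at most three $y$-variables. This gives the upper bound.

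Next I would establish the lower bound, namely that $C_j$ contains at least one $y$-variable. This is immediate from \cref{ass:sfp_hardness-no_clause_only_x_vars}, which states that no clause consists entirely of $x$-variables: hence at least one of the three literals of $C_j$ is a literal over some $y_i \in Y$, so $C_j$ contains at least one $y$-variable.

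Combining the two bounds, the number of $y$-variables appearing in $C_j$ lies in $\{1,2,3\}$, and since $C_j$ was arbitrary this proves the lemma. The only point that requires a moment's care is that the three literals of a clause need not refer to distinct variables, so one should phrase the upper bound in terms of the number of distinct variables (at most three) rather than the number of literals; beyond this small bookkeeping remark there is no real obstacle.
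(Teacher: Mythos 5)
Your argument is correct and is exactly the reasoning the paper intends: the upper bound follows from \cref{ass:sfp_hardness-formula-3dnf} (each clause has exactly three literals) and the lower bound from \cref{ass:sfp_hardness-no_clause_only_x_vars}, which is precisely why the paper states that these two assumptions directly imply the lemma without further proof.
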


Before we describe the graph construction in detail, we introduce local as well as global $y$-variable assignments and define inconsistencies.

\begin{notation} \label{not:sigma2sat3dnf}
	The Boolean formula~$\varphi = C_1 \vee \dots \vee C_m$ is given as a disjunction of $m$ clauses, where every clause~$C_i = \ell_i^1 \wedge \ell_i^2 \wedge \ell_i^3$ is a conjunction of exactly three literals $\ell_i^j \in \{z, \overline{z} \setcolon z \in Z\}$.
	By~$Y(C) \subseteq Y$ we denote the set of $y$-variables that occur (negated or not) in a clause~$C$.
	We call an assignment of these variables a \emph{local ($y$-variable) assignment} and denote it by $T_{Y(C)} \colon Y(C) \to \{0, 1\}$.
	In the same spirit, we call $T_Y$ a \emph{global assignment}.
\end{notation}

\begin{definition} \label{def:sfp-hardness:inconsistency}
	Two local $y$-variable assignments $L = T_{Y(C)}$ and~$L' = T_{Y(C')}$ for distinct clauses $C$ and $C'$ are \emph{consistent} if they coincide on $Y(C) \cap Y(C')$.
	Otherwise, they are \emph{inconsistent} and the (unordered) pair $I = \{L, L'\}$ is an \emph{inconsistency}.
\end{definition}

We are now ready to start with our graph construction, which requires the introduction of several gadgets.

\subsection*{Graph Components}

\subsubsection{Inconsistency Gadgets.}
We start with the simplest gadget, the \emph{inconsistency gadget}.
It corresponds to an inconsistency and its only purpose is to enforce that a minimal separating set~$\cA$ contains a specific pair of arcs.
We use this gadget to ensure that paths not corresponding to a global $y$-variable assignment contain a forbidden pair.

Every inconsistency gadget is a directed acyclic graph as depicted in \cref{fig:sfp_inconsistency_gadget}.
It consists of an $s^I$-$t^I$-path with five arcs where the first, third, and last arc is replaced by two parallel arcs.

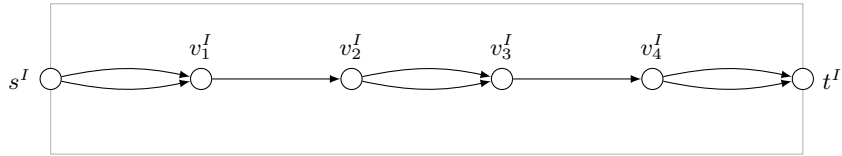
\begin{figure}
	\centering
	\begin{tikzpicture}
		\draw[contourLine] (0,1) -- (10,1) -- (10,-1) -- (0,-1) -- cycle;
	
		\node[stdVertex, fill=white] (s) at (0, 0) [label=left: $s^I$] {};
		\node[stdVertex, fill=white] (t) at (10, 0) [label=right: $t^I$] {};
		\foreach \i in {1,...,4}
		{
			\node[stdVertex] (v\i) at (2 * \i, 0) [label=above: $v^I_{\i}$] {};
		}
		\draw[stdArc] (s) edge [bend left=12] (v1)
	                      edge [bend right=12] (v1);
		\draw[stdArc] (v1) edge (v2);
		\draw[stdArc] (v2) edge [bend left=12] (v3)
		                   edge [bend right=12] (v3);
		\draw[stdArc] (v3) edge (v4);
		\draw[stdArc] (v4) edge [bend left=12] (t)
		                   edge [bend right=12] (t);
	\end{tikzpicture}	\caption[\SFP inconsistency gadget.]{%
		An inconsistency gadget corresponding to an inconsistency~$I$.
	}
	\label{fig:sfp_inconsistency_gadget}
\end{figure}

\begin{lemma} \label{lem:sfp_hardness-inconsistency_separation}
	The unique optimal solution to separate $s^I$ and~$t^I$ in an inconsistency gadget by forbidden pairs is $\cA_I = \{ \{ v^I_1 v^I_2, v^I_3 v^I_4 \} \}$.
\end{lemma}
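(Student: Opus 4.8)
The plan is to verify feasibility of $\cA_I$, then show it is the unique separating set of size one, and finally note that no separating set of size zero exists, so $\cA_I$ is optimal and unique.

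First I would record the basic structure of the gadget. Since the first, third, and last arc of the underlying five-arc path are each doubled, there are exactly $2 \cdot 1 \cdot 2 \cdot 1 \cdot 2 = 8$ distinct $s^I$-$t^I$-paths, and every one of them traverses the vertices in the order $s^I, v^I_1, v^I_2, v^I_3, v^I_4, t^I$. Consequently the two single arcs $v^I_1 v^I_2$ and $v^I_3 v^I_4$ lie on \emph{every} $s^I$-$t^I$-path (they are bridges), whereas each of the six doubled arcs is avoided by some $s^I$-$t^I$-path, namely any path that instead uses the parallel partner of that arc in its bunch. This already yields feasibility: every $s^I$-$t^I$-path contains both arcs of the single pair $\{v^I_1 v^I_2, v^I_3 v^I_4\}$, so $\cA_I$ separates $s^I$ and~$t^I$.

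For optimality and uniqueness I would argue as follows. Since an $s^I$-$t^I$-path exists and the empty set contains no pair, $\cA = \emptyset$ cannot separate; hence any separating set has size at least one, and $|\cA_I| = 1$ shows $\cA_I$ is optimal. Now let $\cA = \{\{a, b\}\}$ be any separating set of size one with $a \neq b$. As $\{a,b\}$ is the only pair in $\cA$, every $s^I$-$t^I$-path must contain both $a$ and $b$; that is, each of $a$ and $b$ lies on every $s^I$-$t^I$-path. By the observation above, the only arcs with this property are $v^I_1 v^I_2$ and $v^I_3 v^I_4$, so $\{a, b\} = \{v^I_1 v^I_2, v^I_3 v^I_4\}$ and therefore $\cA = \cA_I$.

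There is no genuinely hard step here; the only point requiring (routine) care is the verification that each doubled arc is dodged by some $s^I$-$t^I$-path, which is immediate from the three independent bunches of parallel arcs, together with the fact that the two single arcs are unavoidable because the vertex sequence of every $s^I$-$t^I$-path is forced.
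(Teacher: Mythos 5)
Your proposal is correct and follows essentially the same route as the paper: both establish that $\cA_I$ separates, that at least one pair is needed, and that uniqueness holds because any arc belonging to a bunch of parallels is avoided by some $s^I$-$t^I$-path, forcing the single pair to consist of the two unavoidable arcs $v^I_1 v^I_2$ and $v^I_3 v^I_4$. Your phrasing (every arc of the pair must lie on every path) is just the contrapositive of the paper's argument, so there is nothing substantive to add.
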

\begin{proof}
	The set~$\cA_I$ separates $s^I$ and~$t^I$ and every separating set needs at least one pair.
	Thus, every optimal solution consists of a single forbidden pair.
	To prove the uniqueness, suppose there is an optimal solution whose pair contains one of two parallel arcs.
	In this case, a path using the other arc does not completely contain this pair, which yields a contradiction.
	\qed
\end{proof}

\subsubsection{Variable Gadgets.}
The \emph{variable gadgets} correspond to the $x$-variables in~$\varphi$.
Their purpose is to reflect a truth assignment~$T_X$ of these variables.
That is, there should be exactly two optimal sets of forbidden pairs in such a gadget:
one corresponding to setting the variable to \true and one for making it \false.
An illustration of such a gadget is given in \cref{fig:sfp_variable_gadget}.
We now describe its construction in more detail.
Thereafter, we explain what the two separating sets look like and prove that these are indeed the only two optimal solutions.

Basically, the variable gadget corresponding to a variable~$x_i$ consists of two vertices $s^i$ and~$t^i$ that are connected by several paths.
Similar to the inconsistency gadgets we double some arcs on these paths and we link them in a certain way.

The gadget contains an $s^i$-$t^i$-path for every occurrence of~$x_i$ in the formula~$\varphi$.
More precisely, the $j$-th occurrence corresponds to a path $(s^i, v^i_{j, 1}, \dots, v^i_{j, 7}, t^i)$ on which we replace the first, fourth, fifth, and last arc by two parallels.
Additionally, we add a path $(s^i, v^i_{0, 1}, v^i_{0, 2}, v^i_{0, 3}, v^i_{0, 5}, v^i_{0, 6}, v^i_{0, 7}, t^i)$, which is not associated with any occurrence.
On this path we replace the first, fourth, and last arc by two parallel arcs.
Furthermore, we introduce the arcs $v^i_{0, 3} v^i_{j, 4}$ and $v^i_{j, 4} v^i_{0, 5}$ between these paths.

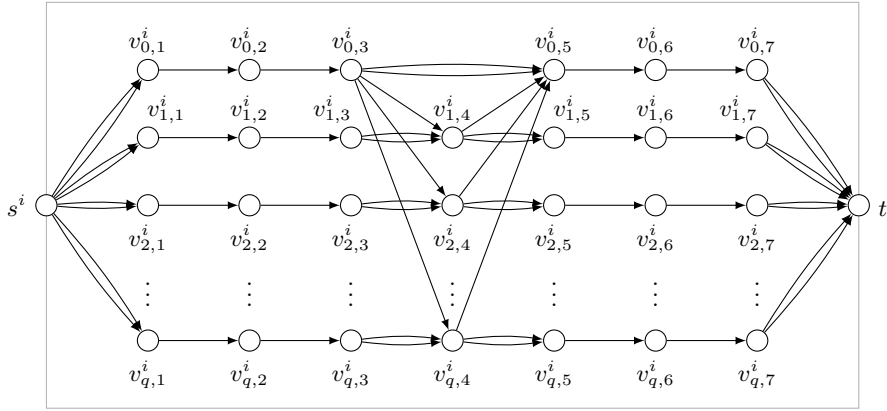
\begin{figure}[h]
	\centering
	\begin{tikzpicture}[scale=.9]
		\draw[contourLine] (0, 3) -- (12, 3) -- (12, -3) -- (0, -3) -- cycle;
	
		\node[stdVertex, fill=white] (s) at (0, 0) [label=left: $s^i$] {};
		\node[stdVertex, fill=white] (t) at (12, 0) [label=right: $t^i$] {};
		
		\foreach \j in {1,...,7}
		{
			% Vertices in other three rows
			\pgfmathsetmacro\jp{int(\j - 1)}
			\foreach \i in {0,1,2,4}
			{
				% horizontal arcs
				\def\xshift{0pt}
				\def\yshift{-2pt}
				\def\ilabel{\i}
				\def\labelpos{above}
				\ifthenelse{\i = 1}{%
					\ifthenelse{\j = 3 \OR \j = 7}{%
						\def\xshift{-7pt}
					}{%
						\ifthenelse{\j = 1 \OR \j = 5}{%
							\def\xshift{7pt}
						}{}
					}
				}{}
				\ifthenelse{\i = 2 \OR \i = 4}{%
					\def\labelpos{below}
					\def\yshift{0pt}
					 \ifthenelse{\i = 4}{
						\def\ilabel{q}
					}{}
				}{}
				\ifthenelse{\i = 0 \AND \j = 4}{}{
					\node[stdVertex] (v\i-\j) at (1.5 * \j, 2 - \i) [label={\labelpos, xshift=\xshift, yshift=\yshift}: $v^i_{\ilabel,\j}$] {};
				}
				\ifthenelse{\j > 1}{%
					\ifthenelse{\j = 4 \OR \j = 5}{%
						\ifthenelse{\i = 0}{}{%
							\draw[stdArc] (v\i-\jp) edge [bend left=7] (v\i-\j)
			                              (v\i-\jp) edge [bend right=7] (v\i-\j);
			            }
					}{%
						\draw[stdArc] (v\i-\jp) edge (v\i-\j);
					}
				}{}
				% Arcs from and to additional path
				\ifthenelse{\j = 5 \AND \i > 0}{
					\draw[stdArc] (v0-3) edge (v\i-4);
					\draw[stdArc] (v\i-4) edge (v0-5);
				}{}
				% Arcs from source
				\ifthenelse{\j = 1}{
					\draw[stdArc] (s) edge [bend left=5] (v\i-1)
					                  edge [bend right=5] (v\i-1);
				}{}
				% Arcs to sink
				\ifthenelse{\j = 7}{
					\draw[stdArc] (v\i-7) edge [bend left=5] (t)
				                          edge [bend right=5] (t);
				}{}
			}
			\node at ($(v2-\j)!0.6!(v4-\j)$) {$\vdots$};
		}
		
		\draw[stdArc] (v0-3) edge [bend left=5] (v0-5);
		\draw[stdArc] (v0-3) edge [bend right=5] (v0-5);
	\end{tikzpicture}	\caption[\SFP variable gadget.]{%
		A variable gadget corresponding to variable~$x_i$.
		We use $q = q_i$ for the number of occurrences of~$x_i$ (including negated literals) in formula~$\varphi$.
	}
	\label{fig:sfp_variable_gadget}
\end{figure}

Let $q_i$ denote the number of occurrences of variable~$x_i$ in the formula~$\varphi$.
As there are, by construction, $q_i + 1$ arc-disjoint $s^i$-$t^i$-paths in this gadget, an optimal set of forbidden pairs separating $s^i$ and~$t^i$ must contain at least $q_i + 1$ pairs.
Thus, the two separating sets $\cA_i = \{ \{ v^i_{j, 1} v^i_{j, 2}, v^i_{j, 2} v^i_{j, 3} \} \setcolon j = 0, \dots, q_i \}$ and $\overline{\cA_i} = \{ \{ v^i_{j, 5} v^i_{j, 6}, v^i_{j, 6} v^i_{j, 7} \} \setcolon j = 0, \dots, q_i \}$ are optimal.
The following lemma shows that these are, in fact, the only two optimal sets of forbidden pairs.
We identify choosing the separating set $\cA_i$ with setting $x_i$ to \true and choosing $\overline{\cA_i}$ with setting $x_i$ to \false.

\begin{lemma} \label{lem:sfp_hardness-separating_sets_variable_gadget}
	The sets $\cA_i$ and~$\overline{\cA_i}$ are the only optimal sets of forbidden pairs separating $s^i$ and~$t^i$ in the variable gadget corresponding to variable~$x_i$.
\end{lemma}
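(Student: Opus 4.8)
The plan is to argue that any optimal separating set $\cA$ (which must have size exactly $q_i+1$, since there are $q_i+1$ arc-disjoint $s^i$-$t^i$-paths) is forced, pair by pair, to coincide with either $\cA_i$ or $\overline{\cA_i}$. First I would recall that each pair in $\cA$ must consist of two \emph{consecutive} arcs lying on a common $s^i$-$t^i$-path with no doubled arc among them: if a pair contained one of two parallel arcs, the $s^i$-$t^i$-path using the other parallel would not contain the pair; and if the two arcs of a pair were not consecutive on any path, then some path through the first arc avoids the second. This is the same observation used in \cref{lem:sfp_hardness-inconsistency_separation}. Hence every pair in $\cA$ is of the form $\{v^i_{j,1}v^i_{j,2}, v^i_{j,2}v^i_{j,3}\}$ or $\{v^i_{j,5}v^i_{j,6}, v^i_{j,6}v^i_{j,7}\}$ for some $j\in\{0,\dots,q_i\}$ --- these are the only two "undoubled two-arc stretches" on the $j$-th path --- \emph{except} on the path $j=0$, where the stretch $v^i_{0,3}\,v^i_{0,5}$ via the detour arcs $v^i_{0,3}v^i_{j,4}, v^i_{j,4}v^i_{0,5}$ also has to be considered; I will need to check that no extra two-arc pair arises there either, using that every detour $v^i_{j,4}$ sits on the $j$-th path with doubled arcs on both sides.

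Next I would do the counting. Since the $q_i+1$ paths $P_0,\dots,P_{q_i}$ (where $P_j$ is the pure $j$-th path, and $P_0$ the extra one) are arc-disjoint, and every pair in $\cA$ lies entirely on one of them, the pigeonhole principle forces exactly one pair of $\cA$ per path $P_j$. So for each $j$ we must independently choose "left pair" $\{v^i_{j,1}v^i_{j,2}, v^i_{j,2}v^i_{j,3}\}$ or "right pair" $\{v^i_{j,5}v^i_{j,6}, v^i_{j,6}v^i_{j,7}\}$ (for $j=0$, the right pair is $\{v^i_{0,5}v^i_{0,6}, v^i_{0,6}v^i_{0,7}\}$). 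The remaining task is to show these $q_i+1$ binary choices cannot be made independently: they must be all-left or all-right.

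The crux --- and the step I expect to be the main obstacle --- is ruling out "mixed" selections using the cross arcs $v^i_{0,3}v^i_{j,4}$ and $v^i_{j,4}v^i_{0,5}$. The idea is that these arcs let one build an $s^i$-$t^i$-walk that traverses the "left half" of path $P_a$ and the "right half" of path $P_b$ for $a\ne b$ (or of $P_0$ and $P_j$): concretely, follow $P_0$ up to $v^i_{0,3}$, cross to $v^i_{j,4}$, cross back to $v^i_{0,5}$, and finish along $P_0$; or mix a pure path $P_j$ with $P_0$ through the detour. If $\cA$ chose the right pair on one of the involved paths and the left pair on the other, such a path would avoid both pairs, contradicting that $\cA$ separates. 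Working out exactly which detour-combinations are available and checking they cover all mixed cases is the delicate bookkeeping; I would organize it by first showing the choice on $P_0$ propagates to every $P_j$ via the detour through $v^i_{j,4}$ (so all $P_j$ agree with $P_0$), which immediately yields that $\cA$ equals $\cA_i$ or $\overline{\cA_i}$. One should also double-check that no pair can be "shared" cleverly between a detour segment and a main path in a way that separates with fewer than $q_i+1$ pairs, but that is already precluded by the arc-disjoint lower bound.
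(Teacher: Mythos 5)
Your overall strategy matches the paper's proof: exclude doubled and cross arcs, use the $q_i+1$ arc-disjoint paths to force exactly one pair per path, and then use the mixed paths through the cross arcs $v^i_{0,3}v^i_{j,4}$ and $v^i_{j,4}v^i_{0,5}$ to propagate the left/right choice from the extra path $P_0$ to every occurrence path $P_j$. However, there is a genuine flaw in your first step. You claim that every pair of an optimal solution consists of two \emph{consecutive} arcs on a common path and justify the consecutiveness by ``if the two arcs of a pair were not consecutive on any path, then some path through the first arc avoids the second.'' That argument proves nothing: a forbidden pair in a separating set need not be contained in every path (indeed, every pair of $\cA_i$ itself is avoided by many paths). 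The legitimate argument here is purely a counting one, as in the paper and in the spirit of \cref{lem:sfp_hardness-inconsistency_separation}: an optimal solution has exactly $q_i+1$ pairs, each of the $q_i+1$ arc-disjoint paths must fully contain one of them, and by arc-disjointness each pair is fully contained in at most one of these paths; hence every pair has both arcs on a single path of the family, and a pair using a parallel arc or a cross arc is excluded because the family can be chosen so that none of its paths contains that pair. This yields only that each pair is a subset of $\{v^i_{j,1}v^i_{j,2},\, v^i_{j,2}v^i_{j,3},\, v^i_{j,5}v^i_{j,6},\, v^i_{j,6}v^i_{j,7}\}$ for some $j$; it does \emph{not} exclude ``diagonal'' pairs such as $\{v^i_{j,1}v^i_{j,2},\, v^i_{j,6}v^i_{j,7}\}$, which your proposal silently assumes away.

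Consequently your crux case analysis is incomplete: you only treat the situation where one path receives the left pair and another the right pair. The repair is easy and uses exactly the mixed-path machinery you set up (it is the paper's four-path argument): for each $j$, the only pairs that can cover the two mixed paths (left half of $P_0$ continued into the right half of $P_j$, and left half of $P_j$ continued into the right half of $P_0$) are the pairs assigned to $P_0$ and to $P_j$, and such a pair is contained in a mixed path only if it is the left pair of the front path or the right pair of the back path. If the pair on $P_j$ were diagonal, the first mixed path would force the pair on $P_0$ to be its left pair while the second would force it to be its right pair, a contradiction; the same reasoning rules out a diagonal pair on $P_0$ and simultaneously rules out opposite left/right choices, giving all-left or all-right as you intended. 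With this reordering (establish ``one pair per path'' by counting first, then handle diagonal and mixed choices together in the four-path argument), your proof is correct and coincides with the paper's.
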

\begin{proof}
	As argued above, an optimal separating set contains exactly~$q_i + 1$ pairs, thus $\cA_i$ and~$\overline{\cA_i}$ are optimal separating sets.
	It remains to prove their uniqueness.

	Similarly to the proof of \cref{lem:sfp_hardness-inconsistency_separation} we can show that no forbidden pair of an optimal solution uses one of two parallel arcs:
	otherwise, there are still $q_i + 1$ disjoint $s^i$-$t^i$-paths, none of which completely contains this pair.
	With the same argumentation it follows that none of the arcs $v^i_{0, 3} v^i_{j, 4}$ and $v^i_{j, 4} v^i_{0, 5}$ between these paths is contained in a forbidden pair of an optimal solution.

	Thus, all forbidden pairs are composed of arcs of the form $v^i_{j, 1} v^i_{j, 2}$, $v^i_{j, 2} v^i_{j, 3}$, $v^i_{j, 5} v^i_{j, 6}$, and $v^i_{j, 6} v^i_{j, 7}$.
	For $j \in \{1, \dots, q_i\}$ we consider the four different paths
	\begin{align*}
	& (s^i, v^i_{0, 1}, \dots, v^i_{0, 7}, t^i),
	&& (s^i, v^i_{0, 1}, v^i_{0, 2}, v^i_{0, 3}, v^i_{j, 4}, \dots, v^i_{j, 7}, t^i),\\
	& (s^i, v^i_{j, 1}, \dots, v^i_{j, 7}, t^i), \text{ and}
	&& (s^i, v^i_{j, 1}, \dots, v^i_{j, 4}, v^i_{0, 5}, v^i_{0, 6}, v^i_{0, 7}, t^i).
	\end{align*}
	An optimal solution has to separate these four paths with only two forbidden pairs as there are $q_i - 1$ disjoint paths in the remaining gadget.
	This, however, is only possible if either the pairs $\{v^i_{0, 1} v^i_{0, 2}, v^i_{0, 2} v^i_{0, 3}\}$ and $\{v^i_{j, 1} v^i_{j, 2}, v^i_{j, 2} v^i_{j, 3}\}$ or the pairs $\{v^i_{0, 5} v^i_{0, 6}, v^i_{0, 6} v^i_{0, 7}\}$ and $\{v^i_{j, 5} v^i_{j, 6}, v^i_{j, 6} v^i_{j, 7}\}$ are chosen.
	Since this holds for all $j \in \{1, \dots, q_i\}$, the claim follows.
	\qed
\end{proof}

\subsubsection{Formula Gadget.}
The \emph{formula gadget} consists of clause assignment units, which we describe later, that are ordered in a layered structure.
For now it suffices to know that they have one input vertex and one output vertex which we use to connect them.
By \cref{lem:sfp_hardness-1-2-3-y_vars_in_clause}, every clause~$C$ of~$\varphi$, contains $\ell \in \{1, 2, 3\}$ many $y$-variables.
For every of the $2^\ell$ possible local $y$-variable assignments~$L = T_{Y(C)}$ for $C$ we introduce one such clause assignment unit.
We denote its input vertex by~$s^L$ and its output vertex by $t^L$.
This yields either two, four, or eight clause assignment units for each clause.

The $i$-th layer of the formula gadget consists of all clause assignment units corresponding to the $i$-th clause of~$\varphi$.
A source~$s^0$ is connected to the input~$s^L$ of every clause assignment unit corresponding to a local assignment~$L = T_{Y(C_1)}$ of the first clause.
In addition, we connect the clause assignment units of successive clauses in the formula gadget by complete bipartite graphs.
Finally, we connect every output~$t^L$ of a unit corresponding to the last clause~$C_m$ with the target~$t^0$.
The structure of the formula gadget is visualized in \cref{fig:sfp_formula_gadget}.

\begin{figure}
	\centering
	\newcommand{\drawAssignmentUnit}[4]{%
		\draw[contourLine] (#1, #2) -- ++(0, .5) -- ++(2, 0) -- ++(0, -1) -- ++(-2, 0) -- cycle;
		
		\node[smallVertex, fill=white] (s-#3) at (#1, #2) {};
		\node[smallVertex, fill=white] (t-#3) at ($(s-#3) + (2, 0)$) {};
		\node at ($(s-#3)!0.5!(t-#3)$) {#4};
	}
	
	\begin{tikzpicture}[scale=.9]
		\draw[contourLine] (0, 2.5) -- (11, 2.5) -- (11, -2.5) -- (0, -2.5) -- cycle;
		
		\node[stdVertex, fill=white] (s) at (0, 0) [label=left: $s^0$] {};
		\node[stdVertex, fill=white] (t) at (11, 0) [label=right: $t^0$] {};
		
		\drawAssignmentUnit{1}{1}{1-1}{$T_{Y(C_1)}^1$}
		\drawAssignmentUnit{1}{-1}{1-2}{$T_{Y(C_1)}^2$}
		
		\drawAssignmentUnit{4}{1.8}{2-1}{$T_{Y(C_2)}^1$}
		\drawAssignmentUnit{4}{.6}{2-2}{$T_{Y(C_2)}^2$}
		\drawAssignmentUnit{4}{-.6}{2-3}{$T_{Y(C_2)}^3$}
		\drawAssignmentUnit{4}{-1.8}{2-4}{$T_{Y(C_2)}^4$}
		
		\node at (7, 0) {$\cdots$};
		
		\drawAssignmentUnit{8}{1}{3-1}{$T_{Y(C_m)}^1$}
		\drawAssignmentUnit{8}{-1}{3-2}{$T_{Y(C_m)}^2$}
		
		\foreach \j in {1,2}
		{
			\draw[stdArc] (s) edge (s-1-\j);
			\draw[stdArc] (t-3-\j) edge (t);
			\foreach \i in {1,...,4}
			{
				\draw[stdArc] (t-1-\j) edge (s-2-\i);
			}
		}
	\end{tikzpicture}	\vspace{-7pt}
	\caption[\SFP formula gadget.]{%
		The formula gadget for the  formula~$\varphi = C_1 \vee C_2 \vee \dots \vee C_m$ in 3-DNF.
		To distinguish the different possible local assignments of a clause~$C$ we enumerate them $T^1_{Y(C)}$, $T^2_{Y(C)}$, and so on.
	}
	\label{fig:sfp_formula_gadget}
\end{figure}

Most clause assignment units provide paths from their input to their output vertex.
Therefore, $s^0$-$t^0$-paths through the formula gadget pass through exactly one clause assignment unit of every layer.
This way, every such path selects a local $y$-variable assignment for every clause.
If these are consistent, that is, if every $y$-variable is assigned the same truth value in each local assignment of a clause that contains it, they can be combined to a global $y$-variable assignment.
The other way around, we can also associate an assignment~$T_Y$ with an $s^0$-$t^0$-path which uses in every layer the clause assignment unit corresponding to $T_{Y(C)} = \restrict{T_Y}{Y(C)}$ for the respective clause~$C$.

Thus, the paths through the formula gadget are linked with the global $y$-variable assignments.
Our goal is to ensure that any such path contains a forbidden pair if and only if the associated assignment satisfies the formula~$\varphi$ (in conjunction with the $x$-variable assignment).
For this, the variable gadgets will play an important role.
However, we also have to take those paths into consideration that do not correspond to consistent $y$-variable assignments.
In order to ensure that these paths contain forbidden pairs as well, we will make use of the inconsistency gadgets.

\subsubsection{Typification.}
All the gadgets introduced until now need to be part of $s$-$t$-paths in the final graph.
Since it will be possible to travel between gadgets later, new paths arise.
In particular, we obtain ``mixed'' paths that start at the source of one gadget and end at the terminal of another.
In order to keep these mixed paths in check when we finally put these pieces together we need the concept of \emph{typification}.

To explain the idea of typification, we start with a small example.
Given are two disjoint graphs $G_1$ and~$G_2$.
In each graph $G_i$ we want to separate a source~$s^i$ and a target~$t^i$ by forbidden pairs.
However, we want to combine these two graphs to a single graph~$G$ without affecting the optimal choice of forbidden pairs, that is, we still only want to select pairs in $G_1$ and~$G_2$.
Simply adding a source~$s$, a target~$t$, and connecting these with arcs $s s^1$, $s s^2$, $t^1 t$, and $t^2 t$ does not suffice as the combined instance can always be separated by the two forbidden pairs $\{s s^1, t^1 t\}$ and $\{s s^2, t^2 t\}$.
But if we know that $p - 1$ pairs are sufficient to separate $G_i$ for $i \in \{1, 2\}$, we can replace each of the four additional arcs by a bunch of $p$ parallel arcs.
In other words: if $k_i$ pairs are sufficient to separate~$G_i$, we can choose any $p > \max \{k_1, k_2\}$.
Therefore, an optimal solution in the combined instance only uses arcs that are contained within the subgraphs $G_1$ and~$G_2$.

If we also add $p + 1$ parallel arcs from $s^1$ to $t^2$ as well as from $s^2$ to $t^1$, we have to choose forbidden pairs separating all paths $(s, s^1, t^2, t)$ and all paths $(s, s^2, t^1, t)$.
These paths only consist of additional arcs not contained in the original graphs $G_1$ and~$G_2$.
The unique optimal solution to separate these paths is to choose the $2 p^2$~forbidden pairs that combine an arc~$s s^1$ with an arc~$t^2 t$ and an arc~$s s^2$ with an arc~$t^1 t$.
Thus, we can separate~$G_1$ by $k_1$ forbidden pairs and $G_2$ by $k_2$ forbidden pairs if and only if we can separate~$G$ by $2 p^2 + k_1 + k_2$ forbidden pairs.
This situation is visualized in \cref{fig:sfp_typification}.

\begin{figure}
	\centering
	\begin{tikzpicture}[scale=.7]
		\node[stdVertex] (s) at (0, 0) [label=left: $s$] {};
		\node[stdVertex] (t) at (10, 0) [label=right: $t$] {};
		
		\node (g1) at (5, 1.75) {$G_1$};
		\node (g2) at (5, -1.75) {$G_2$};
		
		\draw[contourLine] (g1) circle [x radius=2.5, y radius=1];
		\draw[contourLine] (g2) circle [x radius=2.5, y radius=1];
		
		\node[stdVertex, fill=white] (s1) at (2.5, 1.75) [label=right: $s^1$] {};
		\node[stdVertex, fill=white] (t1) at (7.5, 1.75) [label=left: $t^1$] {};
		\node[stdVertex, fill=white] (s2) at (2.5, -1.75) [label=right: $s^2$] {};
		\node[stdVertex, fill=white] (t2) at (7.5, -1.75) [label=left: $t^2$] {};
		
		\draw[->, line width=2pt] (s) edge (s1) edge (s2);
		\draw[->, line width=2pt] (t1) edge (t);
		\draw[->, line width=2pt] (t2) edge (t);
		
		\draw[->, line width=3pt] (s1) .. controls (3, 0) and (7, 0) .. (t2);
		\draw[->, line width=3pt] (s2) .. controls (3, 0) and (7, 0) .. (t1);
	\end{tikzpicture}	\caption[An example for typification.]{
		An exemplary typification construction.
		The bold arcs $s s^1$, $s s^2$, $t^1 t$, and $t^2 t$ represent bunches of $p$ parallel arcs.
		The even thicker arcs $s^1 t^2$ and $s^2 t^1$ represent bunches of $p + 1$ parallel arcs.
	}
	\label{fig:sfp_typification}
\end{figure}
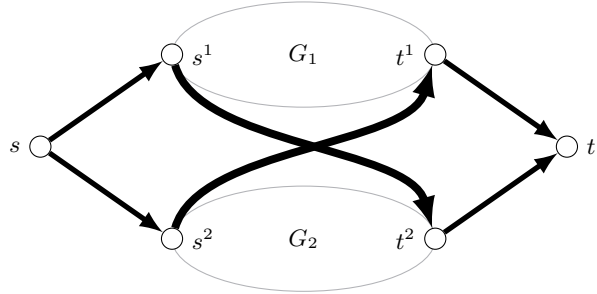

The reason we introduce these additional arcs is because they help us weed out mixed paths:
if we allow arcs between $G_1$ and~$G_2$ in~$G$, then it becomes possible to obtain $s$-$t$-paths containing $s^i$ and~$t^j$ for $i \neq j$.
By adding the additional ``diagonal'' arcs $s^1 t^2$ and $s^2 t^1$ we enforce the choice of all $2 p^2$ pairs $\{s s^i, t^j t\}$ for $i \neq j$ and, thus, ensure that these mixed paths are already saturated with at least one pair.
This just leaves paths that start with~$s s^i$ and end with~$t^i t$ for all possible indices~$i$.
We only have to examine whether all paths of these two \emph{types} contain a forbidden pair or not.
Note that this does include paths that are not solely part of a subgraph~$G_i$, as they can leave and return, but it does reduce the potential paths without a forbidden pair immensely.

This construction can be generalized to more than only two types.
For $q$~subgraphs $G_1, \dots, G_q$ with sources~$s^i$ and targets~$t^i$, $i \in \{1, \dots, q\}$, we can add $p$~parallel arcs from $s$ to every source~$s^i$ and from every target~$t^i$ to~$t$.
Additionally, we add $p + 1$ parallel arcs~$s^i t^j$ for all $i, j \in \{1, \dots, q\}$ with $i \neq j$.
Every optimal solution has to use the $p^2$ forbidden pairs of arcs $\{s s^i, t^j t\}$ of different types~$i \neq j$.
Thus, every optimal solution has $p^2 q (q - 1)$ forbidden pairs and, additionally, the pairs required to separate all paths of the $q$ different types (all paths using $s s^i$ and $t^i t$ for some~$i$).
We intend to use this to give all inconsistency gadgets, all variable gadgets, as well as the formula gadget their own type.

\subsubsection{Clause Assignment Units and Graph Construction.}
To construct the graph corresponding to formula~$\varphi$ we use the formula gadget, a variable gadget for every $x$-variable, and several inconsistency gadgets.
More precisely, for every pair of clause assignment units (within the formula gadget) that corresponds to incompatible assignments we introduce one such inconsistency gadget.
All these gadgets are combined into the graph~$G$ as explained in the typification section.

Let us describe the graph construction in detail.
That is, we finally have to specify what the clause assignment units look like and how these are connected to the other gadgets.
Recall that the formula gadget contains a clause assignment unit for every clause~$C$ and every possible assignment~$L = T_{Y(C)}$ of Boolean values to the $y$-variables contained in~$C$.
As already stated in the formula gadget section, these are $2^\ell$~clause assignment units for a clause with $\ell$~many $y$-variables.

A \emph{clause assignment unit} corresponding to a $y$-variable assignment~$L$ of a clause~$C$ contains exactly three vertices: $s^L$, $v^L$, and $t^L$.
Note that we use $s^L$ and~$t^L$ in order to connect the clause assignment units in the formula gadget as described above.
The vertices $v^L$ and~$t^L$ are connected by an arc~$v^L t^L$ if and only if $C$ contains at least one $y$-literal that evaluates to \false with the $y$-variable assignment~$L$.
Hence, this arc is present in all but one clause assignment unit corresponding to a clause~$C$ since there is only exactly one assignment~$T_{Y(C)}$ that satisfies all $y$-literals in~$C$.

These are all components within a clause assignment unit.
In particular, the clause assignment units are not connected and, thus, neither is the formula gadget.
The following modifications only add some arcs between different gadgets.
These are illustrated by dashed arcs in \cref{fig:sfp_clause_assignment_unit,fig:sfp_variable_gadget_with_connections,fig:sfp_inconsistency_gadget_with_connections}.

In addition to the (potentially non-existing) arc~$v^L t^L$, we add another path from $v^L$ to~$t^L$ for every $x$-literal contained in~$C$.
The path of a literal corresponding to variable~$x_i$ passes through the variable gadget of~$x_i$.
If the occurrence of~$x_i$ in~$C$ is the $j$-th occurrence in~$\varphi$ in total, this path uses either the arcs $v^i_{j, 1} v^i_{j, 2}$ and $v^i_{j, 2} v^i_{j, 3}$ (if~$C$ contains the literal~$x_i$) or the arcs $v^i_{j, 5} v^i_{j, 6}$ and $v^i_{j, 6} v^i_{j, 7}$ (if~$C$ contains the literal~$\overline{x}_i$).
In the former case we add the inter-gadget arcs $v^L v^i_{j, 1}$ and~$v^i_{j, 3} t^L$ and in the latter case we add $v^L v^i_{j, 5}$ as well as~$v^i_{j, 7} t^L$.
These connecting arcs are indicated by dashed orange arcs in \cref{fig:sfp_clause_assignment_unit,fig:sfp_variable_gadget_with_connections}.

\begin{figure}[p]
	\centering
	\begin{tikzpicture}[scale=.8]
		\draw[contourLine] (0, 2) -- (13.5, 2) -- (13.5, -2) -- (0, -2) -- cycle;
	
		\node[stdVertex, fill=white] (s) at (0, 0) [label=left: $s^L$] {};
		\node[stdVertex, color=pathColor1] (v0) at (1.5, 0) {};
		\node[stdVertex, color=pathColor1] (v1) at (3, 0) {};
		\node[stdVertexPhantom] (v2) at (4.5, 0) {};
		\node[stdVertexPhantom] (v3) at (6, 0) {};
		\node at ($(v2)!0.5!(v3)$) {$\cdots$};
		\node[stdVertex] (v4) at (7.5, 0) [label=above: $v^L$] {};
		\node[stdVertex, color=pathColor2] (v5) at (9, 1) {};
		\node[stdVertex, color=pathColor2] (v6) at (10.5, 1) {};
		\node[stdVertex, color=pathColor2] (v7) at (12, 1) {};
		\node[stdVertex, fill=white] (t) at (13.5, 0) [label=right: $t^L$] {};
		
		\draw[->, dashed, very thick, color=pathColor3] (s) edge (v0);
		\draw[stdArc, thick, color=pathColor1] (v0) edge (v1);
		\draw[->, dashed, very thick, color=pathColor3] (v1) edge (v2);
		\draw[->, dashed, very thick, color=pathColor3] (v3) edge (v4);
		\draw[->, dashed, very thick, color=pathColor4] (v4) edge (v5);
		\draw[stdArc, thick, color=pathColor2] (v5) edge (v6);
		\draw[stdArc, thick, color=pathColor2] (v6) edge (v7);
		\draw[->, dashed, very thick, color=pathColor4] (v7) edge (t);
		\draw[stdArc] (v4) edge[bend right] (t);
		
		\draw[decorate, decoration={brace}] (7.5, -.6) -- node [label=below: $P^L$] {} (0, -.6);
	\end{tikzpicture}	\caption[\SFP clause assignment unit.]{%
		A clause assignment unit corresponding to a local $y$-variable assignment~$L$ for clause~$C$ containing a single $x$-variable.
		The assignment~$L$ does not fulfill all $y$-literals of~$C$ as the arc~$v^L t^L$ is present.
		The colored, solid arcs are contained in other gadgets and the dashed arcs connect these, see also \cref{fig:sfp_variable_gadget_with_connections,fig:sfp_inconsistency_gadget_with_connections}.
	}
	\label{fig:sfp_clause_assignment_unit}
\end{figure}
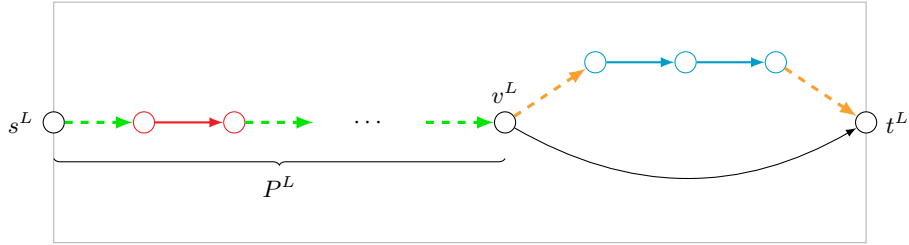

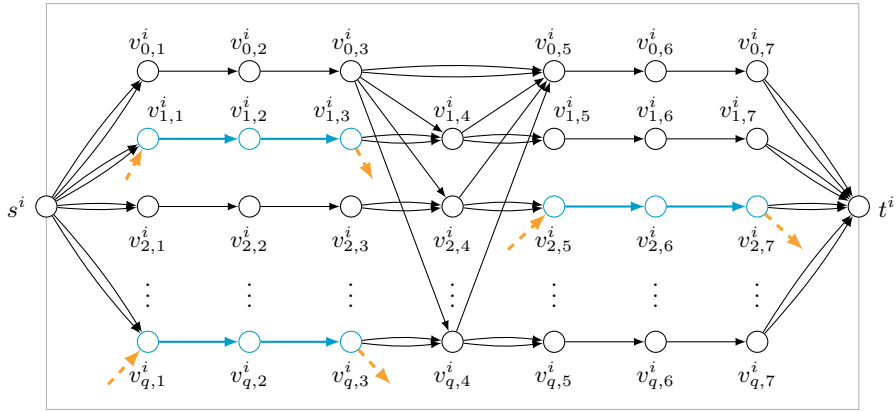
\begin{figure}[p]
	\centering
	\begin{tikzpicture}[scale=.9]
		\draw[contourLine] (0, 3) -- (12, 3) -- (12, -3) -- (0, -3) -- cycle;
	
		\node[stdVertex, fill=white] (s) at (0, 0) [label=left: $s^i$] {};
		\node[stdVertex, fill=white] (t) at (12, 0) [label=right: $t^i$] {};
		
		\foreach \j in {1,...,7}
		{
			% Vertices in other three rows
			\pgfmathsetmacro\jp{int(\j - 1)}
			\foreach \i in {0,1,2,4}
			{
				% horizontal arcs
				\def\xshift{0pt}
				\def\yshift{-2pt}
				\def\ilabel{\i}
				\def\labelpos{above}
				\def\col{black}
				\def\thickness{thin}
				\ifthenelse{\i = 1}{%
					\ifthenelse{\j = 3 \OR \j = 7}{%
						\def\xshift{-7pt}
					}{%
						\ifthenelse{\j = 1 \OR \j = 5}{%
							\def\xshift{7pt}
						}{}
					}
				}{}
				\ifthenelse{\i = 2 \OR \i = 4}{%
					\def\labelpos{below}
					\def\yshift{0pt}
					 \ifthenelse{\i = 4}{
						\def\ilabel{q}
					}{}
				}{}
				\ifthenelse{\j > 0 \AND \j < 4}{
					\ifthenelse{\i = 1 \OR \i = 4}{
						\def\col{pathColor2}
						\def\thickness{thick}
					}{}
				}{}
				\ifthenelse{\j > 4 \AND \j < 8 \AND \i = 2}{
					\def\col{pathColor2}
					\def\thickness{thick}
				}{}
				\ifthenelse{\i = 0 \AND \j = 4}{}{
					\node[stdVertex, color=\col] (v\i-\j) at (1.5 * \j, 2 - \i) [label={\labelpos, xshift=\xshift, yshift=\yshift}: $v^i_{\ilabel,\j}$] {};
				}
				\ifthenelse{\j > 1}{%
					\ifthenelse{\j = 4 \OR \j = 5}{%
						\ifthenelse{\i = 0}{}{%
							\draw[stdArc] (v\i-\jp) edge [bend left=7] (v\i-\j)
			                              (v\i-\jp) edge [bend right=7] (v\i-\j);
			            }
					}{%
						\draw[stdArc] (v\i-\jp) edge[color=\col, \thickness] (v\i-\j);
					}
				}{}
				% Arcs from and to additional path
				\ifthenelse{\j = 5 \AND \i > 0}{
					\draw[stdArc] (v0-3) edge (v\i-4);
					\draw[stdArc] (v\i-4) edge (v0-5);
				}{}
				% Arcs from source
				\ifthenelse{\j = 1}{
					\draw[stdArc] (s) edge [bend left=5] (v\i-1)
					                  edge [bend right=5] (v\i-1);
				}{}
				% Arcs to sink
				\ifthenelse{\j = 7}{
					\draw[stdArc] (v\i-7) edge [bend left=5] (t)
				                          edge [bend right=5] (t);
				}{}
			}
			\node at ($(v2-\j)!0.6!(v4-\j)$) {$\vdots$};
		}
		
		\draw[stdArc] (v0-3) edge [bend left=5] (v0-5);
		\draw[stdArc] (v0-3) edge [bend right=5] (v0-5);
		
		\node[stdVertexPhantom] (in1) at (1.1, .25) {};
		\node[stdVertexPhantom] (out1) at (4.9, .25) {};
		\node[stdVertexPhantom] (in2) at (6.7, -.75) {};
		\node[stdVertexPhantom] (out2) at (11.3, -.75) {};
		\node[stdVertexPhantom] (in3) at (.8, -2.75) {};
		\node[stdVertexPhantom] (out3) at (5.2, -2.75) {};
		
		\draw[->, dashed, very thick, color=pathColor4] (in1) edge (v1-1);
		\draw[->, dashed, very thick, color=pathColor4] (v1-3) edge (out1);
		\draw[->, dashed, very thick, color=pathColor4] (in2) edge (v2-5);
		\draw[->, dashed, very thick, color=pathColor4] (v2-7) edge (out2);
		\draw[->, dashed, very thick, color=pathColor4] (in3) edge (v4-1);
		\draw[->, dashed, very thick, color=pathColor4] (v4-3) edge (out3);
	\end{tikzpicture}	\caption[Variable gadget with connections.]{%
		A variable gadget (as in \cref{fig:sfp_variable_gadget}) with the connections to clause assignment units.
		The blue arcs and the dashed orange arcs correspond to these from \cref{fig:sfp_clause_assignment_unit}.
		In this example, the first and last occurrence of the corresponding $x$-variable occurs non-negated and the second occurrence is negated.
	}
	\label{fig:sfp_variable_gadget_with_connections}
\end{figure}

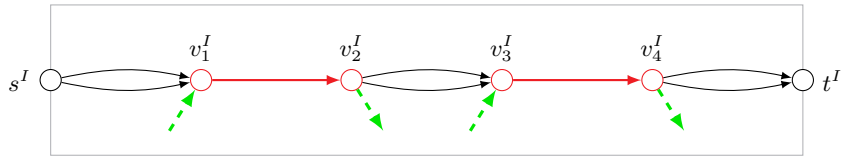
\begin{figure}[p]
	\centering
	\begin{tikzpicture}
		\draw[contourLine] (0,1) -- (10,1) -- (10,-1) -- (0,-1) -- cycle;
	
		\node[stdVertex, fill=white] (s) at (0, 0) [label=left: $s^I$] {};
		\node[stdVertex, fill=white] (t) at (10, 0) [label=right: $t^I$] {};
		\foreach \i in {1,...,4}
		{
			\node[stdVertex, color=pathColor1] (v\i) at (2 * \i, 0) [label=above: $v^I_{\i}$] {};
		}
	
		\node[stdVertexPhantom] (in1) at (1.5, -.8) {};
		\node[stdVertexPhantom] (out1) at (4.5, -.8) {};
		\node[stdVertexPhantom] (in2) at (5.5, -.8) {};
		\node[stdVertexPhantom] (out2) at (8.5, -.8) {};
	
		\draw[stdArc] (s) edge [bend left=12] (v1)
	                      edge [bend right=12] (v1);
		\draw[stdArc, thick, color=pathColor1] (v1) edge (v2);
		\draw[stdArc] (v2) edge [bend left=12] (v3)
		                   edge [bend right=12] (v3);
		\draw[stdArc, thick, color=pathColor1] (v3) edge (v4);
		\draw[stdArc] (v4) edge [bend left=12] (t)
		                   edge [bend right=12] (t);
		                   
		\draw[->, dashed, very thick, color=pathColor3] (in1) edge (v1);
		\draw[->, dashed, very thick, color=pathColor3] (v2) edge (out1);
		\draw[->, dashed, very thick, color=pathColor3] (in2) edge (v3);
		\draw[->, dashed, very thick, color=pathColor3] (v4) edge (out2);
	\end{tikzpicture}	\caption[Inconsistency gadget with connections.]{%
		An inconsistency gadget (as in \cref{fig:sfp_inconsistency_gadget}) with the connections to clause assignment units or other inconsistency gadgets.
		The red arcs and the green dashed arcs correspond to these from \cref{fig:sfp_clause_assignment_unit}.
	}
	\label{fig:sfp_inconsistency_gadget_with_connections}
\end{figure}

Additionally, we introduce arcs between different gadgets that provides paths from $s^L$ to~$v^L$.
For this, we introduce one inconsistency gadget for every inconsistency and connect them as follows.
A clause assignment unit corresponding to an assignment~$L = T_{Y(C_i)}$ of clause~$C_i$ gets an $s^L$-$v^L$-path~$P^L$ that passes through every inconsistency gadget for an inconsistency~$I = \{L, T_{Y(C_j)}\}$ containing~$L$.
In such a gadget, this path uses either the arc $v^I_1 v^I_2$ (if~$j > i$) or the arc~$v^I_3 v^I_4$ (if~$j < i$).
The path~$P^L$ collects all these arcs in arbitrary order by introducing further arcs between them.
The connecting arcs are indicated by dashed green arcs in \cref{fig:sfp_clause_assignment_unit,fig:sfp_inconsistency_gadget_with_connections}.

\subsubsection{Magnitudes and Parameters.}
Recall that we denote the number of clauses of~$\varphi = C_1 \vee \dots \vee C_m$ by~$m$ and the number of $x$- and $y$-variables by $n_x$ and~$n_y$, respectively (compare \cref{not:sigma2sat,not:sigma2sat3dnf}).
Also as before, let $q_i$ denote the number of occurrences of the $i$-th $x$-variable $x_i$ in the formula~$\varphi$.
Additionally, we denote the number of inconsistencies by~$n_I$.

The graph of the corresponding \SFP instance consists of one formula gadget, $n_x$~variable gadgets, and $n_I$~inconsistency gadgets.
The formula gadget consists of at most eight clause assignment units per clause.
Hence, we have $\bigO{m}$~clause assignment units.
Moreover, since we have at most one inconsistency gadget for every pair of clause assignment units, it holds that $n_I \in \bigO{m^2}$.

As shown in  \cref{lem:sfp_hardness-inconsistency_separation,lem:sfp_hardness-separating_sets_variable_gadget} we need one forbidden pair to separate $s^I$ and $t^I$ in the inconsistency gadget for~$I$ and $q_i + 1$~forbidden pairs to separate $s^i$ and $t^i$ in the variable gadget for~$x_i$.
Since the formula gadget itself is not connected, we do not need additional forbidden pairs for it.
Thus, for the typification framework, we choose
\begin{align}
	p = \max_{i = 1, \dots, n_x} q_i + 2 \label{eq:sfp:def-p}
\end{align}
and add $p$ parallel arcs from a source~$s$ to all input vertices of variable and inconsistency gadgets as well as to the formula gadget.
That is, we add all parallels of the form $s s^i$, $s s^I$, and $s s^0$.
Analogously, we add $p$~parallel arcs from every such output vertex to a target~$t$ resulting in parallels $t^i t$, $t^I t$, and $t^0 t$.
Furthermore, we add $p + 1$~parallel arcs from every input vertex of such a gadget to the output vertices of all other gadgets.
In total, we introduce
\begin{align*}
	2 p (n_x + n_I + 1) + (p + 1) (n_x + n_I + 1) (n_x + n_I) \in \bigO{m^5}
\end{align*}
arcs for the typification, where the asymptotic complexity~$\bigO{m^5}$ follows since $n_I \in \bigO{m^2}$ and since both, $n_x$ and $p$, are bounded by the number $3 \cdot m$ of literals in~$\varphi$.
As explained in the typification section we need
\begin{align}
	k_0 = p^2 (n_x + n_I + 1) (n_x + n_I) \label{eq:sfp:def-k0}
\end{align}
pairs to separate $s$ and $t$ in the graph that only consists of arcs introduced for typification.
We show in the analysis section below that we can separate the graph~$G$ by
\begin{align}
k = k_0 + n_I + \sum_{i = 1}^{n_x} (q_i + 1) \label{eq:sfp:def-k}
\end{align}
forbidden pairs if and only if the Boolean formula $\varphi$ has an $x$-variable assignment such that $\varphi$ evaluates to \true for every $y$-variable assignment.

\subsection*{Analysis}

So far, given a Boolean formula~$\varphi$, we have constructed an \SFP instance $G$ with source~$s$ and target~$t$ and specified the number~$k$ of forbidden pairs.
In the following, we use this to give a proof for \cref{thm:sfp_sigma2p-complete}, divided into \cref{lem:sfp_hardness-graph_acyclic,lem:sfp_hardness-g_poly_size,lem:sfp_hardness-lower_bound_separating_pairs,lem:sfp_hardness-satisfiable-k_pairs_sufficient,lem:sfp_hardness-not_satisiable-k_pairs_insufficient}.

\begin{lemma} \label{lem:sfp_hardness-graph_acyclic}
	The graph~$G$ that is constructed as described above is acyclic.
\end{lemma}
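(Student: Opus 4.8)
The plan is to prove that $G$ is acyclic by exhibiting a topological order, built in two stages: first a coarse \emph{level} function $\ell\colon V\to\{0,1,\dots,m+3\}$ that is nondecreasing along every arc of $G$, and then, separately for each value of $\ell$, a fine linear order on the vertices of that level that is consistent with all arcs of $G$ contained in the level. Concatenating the fine orders in increasing order of level yields a topological order: along any directed cycle the level can never strictly increase, so the whole cycle would lie inside a single level, which the fine order there forbids. Hence $G$ is acyclic.

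For the coarse level I would follow the intended left-to-right flow. Set $\ell(s)=0$; put $\ell=1$ on all typification input vertices ($s^0$, every $s^i$, every $s^I$), $\ell=m+2$ on all typification output vertices ($t^0$, every $t^i$, every $t^I$), and $\ell=m+3$ on $t$. For the remaining, interior vertices I would use the clause index they are associated with, shifted by one: the three vertices $s^L,v^L,t^L$ of a clause assignment unit for $C_c$, as well as the occurrence-path vertices $v^i_{j,1},\dots,v^i_{j,7}$ of an occurrence of $x_i$ inside $C_c$, all get level $c+1$; for the inconsistency gadget of $I=\{L,L'\}$ with $L$ a local assignment of $C_c$, $L'$ of $C_{c'}$ and $c<c'$, the pair $v^I_1,v^I_2$ gets level $c+1$ while $v^I_3,v^I_4$ gets level $c'+1$; and if $x_i$ occurs first in $C_{c_{\min}}$ and last in $C_{c_{\max}}$, then $v^i_{0,1},v^i_{0,2},v^i_{0,3}$ get level $c_{\min}+1$ and $v^i_{0,5},v^i_{0,6},v^i_{0,7}$ get level $c_{\max}+1$. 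I would then verify $\ell$-monotonicity arc type by arc type: the typification arcs (which run $s\to\text{input}$, $\text{input}\to\text{output}$, $\text{output}\to t$); the internal formula-gadget arcs $s^0\to s^L$, $v^L\to t^L$, the bipartite arcs $t^L\to s^{L'}$ between consecutive clauses, and $t^L\to t^0$; the internal variable-gadget arcs, including the cross arcs $v^i_{0,3}v^i_{j,4}$ and $v^i_{j,4}v^i_{0,5}$; the internal inconsistency-gadget arcs $s^I\to v^I_1\to v^I_2\to v^I_3\to v^I_4\to t^I$; the connecting arcs $v^L v^i_{j,1}$, $v^i_{j,3}t^L$ and their negated analogues; and the arcs of the collector paths $P^L$. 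All of these are $\ell$-nondecreasing; the only one that needs care is the middle arc $v^I_2\to v^I_3$, which jumps from level $c+1$ to level $c'+1>c+1$, and this is consistent with the rest precisely because the construction routes $P^L$ through $v^I_1 v^I_2$ exactly when $L$ is the local assignment of the lower-indexed of the two clauses of $I$, so the ``early/late'' split of each inconsistency gadget matches the clause order.

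Next I would show that the subgraph of $G$ induced by each level is acyclic; this also supplies the fine order. Levels $0,1,m+2,m+3$ carry no internal arcs. Fix $c\in\{1,\dots,m\}$ and consider level $c+1$. In this subgraph every $s^L$ with $L$ a local assignment of $C_c$ is a source and every such $t^L$ is a sink, since all their other incident arcs leave the level; moreover each $v^L$ has only one in-arc in all of $G$ (the last arc of $P^L$), and no arc of $G$ ever runs from an occurrence-path vertex or from some $t^{L'}$ back to a $v^L$. Consequently any directed cycle inside level $c+1$ avoids every vertex $s^L,v^L,t^L$, and therefore lies either among the inconsistency-gadget halves of this level --- but these, together with the bridging arcs of the paths $P^L$, form a vertex-disjoint union of the simple paths $P^L$, because two local assignments of the \emph{same} clause never constitute an inconsistency and so their collector paths share no inconsistency gadget --- or inside a single variable gadget, since distinct variable gadgets are non-adjacent and the only surviving vertices are occurrence-path vertices and the $v^i_{0,\cdot}$. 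Both are impossible: an inconsistency gadget is a path with doubled arcs, hence acyclic, and a variable gadget is acyclic as well, admitting the layered topological order that starts at $s^i$, then lists all $v^i_{j,1}$, then all $v^i_{j,2}$, and so on through $v^i_{j,7}$, and finally $t^i$, which also respects the cross arcs.

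The routine part will be the arc-by-arc monotonicity check and confirming that the degenerate case $c_{\min}=c_{\max}$ is harmless (it is in fact excluded by \cref{ass:sfp_hardness-variable_in_at_least_2_clauses}). The step I expect to be the real obstacle is the bookkeeping around the collector paths and the inconsistency gadgets: one has to check that each $P^L$, even though it visits its inconsistency gadgets in an arbitrary order, never leaves its own level, and that all the middle arcs $v^I_2 v^I_3$ point upward in the level order --- and both of these facts rest on the single design decision that the arc $v^I_1 v^I_2$ of the gadget for $I=\{L,L'\}$ belongs to the collector path of whichever of $L,L'$ lies in the clause with the smaller index.
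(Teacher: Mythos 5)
Your proposal is correct and follows essentially the same route as the paper's proof: both exhibit an explicit topological ordering organized by clause index, resting on the same key facts (the collector path $P^L$ of the lower-indexed clause of an inconsistency uses $v^I_1v^I_2$ while the higher-indexed one uses $v^I_3v^I_4$; occurrence-path and extra-path vertices of the variable gadgets are slotted according to the clauses of the occurrences; gadget inputs go right after $s$ and outputs right before $t$). Your two-stage presentation (coarse level function plus per-level refinement) is only an organizational variant of the paper's enumeration procedure, arguably stated a bit more explicitly.
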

\begin{proof}
	As a graph is acyclic if and only if it exists a topological ordering, we prove the claim by specifying such a topological ordering for~$G$.
	However, we do not explicitly map every vertex to a natural number.
	Instead, we describe a procedure how to obtain the order of the vertices.
	The reason is that we have to insert some vertices in between others multiple times.
	This would make a formal definition of this mapping quite technical.

	In a first step, we enumerate all vertices in the formula gadget together with the interior vertices from inconsistency gadgets.
	Here, the ``interior vertices'' of an inconsistency gadget for an inconsistency~$I$ are the vertices $v^I_1, \dots, v^I_4$.
	Note that each arc of the form $v^I_1 v^I_2$ and~$v^I_3 v^I_4$ is contained in  an $s^L$-$v^L$-path~$P^L$ of some clause assignment unit.
	We start to enumerate the vertices in clause assignment units corresponding to the first clause~$C_1$.
	There, we first enumerate the paths~$P^L$ of assignments~$L$ for~$C_1$ followed by the output vertices~$t^L$ of the corresponding gadgets.
	Afterwards, we proceed in the same way with the subsequent clauses.
	This procedure is visualized in \cref{fig:sfp_topo_sorting_formula_gadget}.

	\begin{figure}
		\centering
		\newcommand{\drawAssignmentUnit}[3]{%
			\draw[contourLine] (#1, #2) -- ++(0, .5) -- ++(3, 0) -- ++(0, -1) -- ++(-3, 0) -- cycle;
			
			\node[smallVertex, fill=white] (s-#3) at (#1, #2) {};
			\node[smallVertex, fill=white] (v-#3) at ($(s-#3) + (1.75, 0)$) {};
			\node[smallVertex, fill=white] (t-#3) at ($(s-#3) + (3, 0)$) {};
			
			\draw[snakeArc] (s-#3) -- (v-#3);
			
			\coordinate (c11-#3) at ($(v-#3) + (.3, .35)$);
			\coordinate (c12-#3) at ($(t-#3) + (-.3, .35)$);
			\coordinate (c21-#3) at ($(v-#3) + (.3, .2)$);
			\coordinate (c22-#3) at ($(t-#3) + (-.3, .2)$);
			
			\draw[stdArc, color=Gray!40] (v-#3) -- (c11-#3) -- (c12-#3) -- (t-#3);
			\draw[stdArc, color=Gray!40] (v-#3) -- (c21-#3) -- (c22-#3) -- (t-#3);
			\draw[stdArc, color=Gray!40] (v-#3) edge [bend right] (t-#3);
		}
		
		\begin{tikzpicture}[scale=.8]
			\draw[contourLine] (0, 2.5) -- (13.5, 2.5) -- (13.5, -2.5) -- (0, -2.5) -- cycle;
			
			\node[stdVertex, fill=white] (s) at (0, 0) [label=left: $s^0$] {};
			\node[stdVertex, fill=white] (t) at (13.5, 0) [label=right: $t^0$] {};
			
			\node at (1, 0) {$\cdots$};
			
			\drawAssignmentUnit{2}{1}{1-1}
			\drawAssignmentUnit{2}{-1}{1-2}
			
			\drawAssignmentUnit{8}{1.8}{2-1}
			\drawAssignmentUnit{8}{.6}{2-2}
			\drawAssignmentUnit{8}{-.6}{2-3}
			\drawAssignmentUnit{8}{-1.8}{2-4}
			
			\node at (12.5, 0) {$\cdots$};
			
			\foreach \j in {1,2}
			{
				\foreach \i in {1,...,4}
				{
					\draw[stdArc] (t-1-\j) edge (s-2-\i);
				}
			}
		
			\begin{scope}[on background layer]
				\draw[pathMarker1, ->, line cap=round, line join=round]
					(1.5, .5) -- (s-1-1.center) -- (v-1-1.center) -- (s-1-2.center) -- (v-1-2.center) -- (t-1-1.center) -- (t-1-2.center) -- (s-2-1.center) -- (v-2-1.center) -- (s-2-2.center) -- (v-2-2.center) -- (s-2-3.center) -- (v-2-3.center) -- (s-2-4.center) -- (v-2-4.center) -- (t-2-1.center) -- (t-2-4.center) -- (12.5, -.5);
			\end{scope}
		\end{tikzpicture}		\caption[Topological ordering of the formula gadget.]{%
			A schematic representation how to enumerate the vertices in a formula gadget for a topological ordering.
			The $s^L$-$v^L$-paths within the clause assignment units are drawn as wavy lines.
			The paths and lines that might connect $v^L$ and~$t^L$ are only indicated.
			This visualizes the first step in the proof of \cref{lem:sfp_hardness-graph_acyclic}.
		}
		\label{fig:sfp_topo_sorting_formula_gadget}
	\end{figure}

	By enumerating the formula gadget that way, for $i < j$ every vertex corresponding to a clause~$C_i$ gets a lower number than every vertex corresponding to clause~$C_j$.
	This holds in particular for the vertices on the $s^L$-$v^L$-paths~$P^L$ in the clause assignment units.
	For every inconsistency~$I = \{L = T_{Y(C_i)}, L' = T_{Y(C_j)}\}$ with $i < j$, the path $P^L$ uses the arc $v^I_1 v^I_2$ and the path $P^{L'}$ uses the arc $v^I_3 v^I_4$ within the corresponding inconsistency gadget.
	Thus, the partial topological ordering defined up to this point is not only consistent with all arcs of the formula gadget and the arcs in between formula and inconsistency gadgets, but also within all these inconsistency gadgets.

	It remains to prove that we can extend this partial ordering to the variable gadgets and the missing in- and output vertices.
	The latter are, however, no problem as we can put all input vertices at the beginning and all output vertices at the end, directly after~$s$ or before~$t$ (except for the in- and outputs of clause assignment units that already are assigned a number in the first step).

	Thus, in a second step, we have to assign numbers to the vertices of the variable gadgets.
	Such a variable gadget corresponding to a variable~$x_i$ consists of $q_i + 1$ many $s^i$-$t^i$-paths.
	With the exception of the additional path, every path corresponds to one occurrence of this variable.
	Let us consider the $j$-th occurrence and let~$C$ be the corresponding clause.
	Depending on whether $x_i$ occurs negated or not, we have restrictions either for the values of $v^i_{j, 5}$ and~$v^i_{j, 7}$ or for the values of $v^i_{j, 1}$ and $v^i_{j, 3}$, respectively (as those have arcs to vertices in clause assignment units that are already assigned a number).
	In particular, we only have restrictions on the ``left half'' or on the ``right half'' of the path but not on both.
	In the case the $j$-th occurrence is not negated, we assign the vertices $v^i_{j, 1}, \dots, v^i_{j, 3}$ increasing values that we insert in between the highest number of a vertex~$v^L$ and the lowest number of a vertex~$t^L$ in the topological ordering for every assignment~$L$ of clause~$C$.
	Note that we have enumerated these vertices in the first phase, such that the highest number of a vertex~$v^L$ is in fact smaller than the lowest number of a vertex~$t^L$ for all assignments~$L$ of clause~$C$.

	As all paths in the variable gadget are only connected to the additional path $(s^i, v^i_{0, 1},  v^i_{0, 2},  v^i_{0, 3},  v^i_{0, 5},  v^i_{0, 6},  v^i_{0, 7}, t^i)$, we can extend the partial topological ordering within every variable gadget.
	Therefore, we can assign $v^i_{0, 1}$, $v^i_{0, 2}$, and $v^i_{0, 3}$ values that are smaller and $v^i_{0, 5}$, $v^i_{0, 6}$, and $v^i_{0, 7}$ values that are larger than any values of vertices within the variable gadget.
	Thereafter, we can insert the ``missing half'' of paths accordingly.
	\qed
\end{proof}

\begin{lemma} \label{lem:sfp_hardness-g_poly_size}
	The graph~$G$ corresponding to the formula~$\varphi$ is of polynomial size and it can be constructed in polynomial time, both with respect to the size of~$\varphi$.
\end{lemma}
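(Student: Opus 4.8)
The plan is to bound the number of vertices and arcs of $G$ by a polynomial in $m$ (and hence in $|\varphi|$, since a 3-DNF formula with $m$ clauses has size $\Theta(m)$) by adding up the contributions of the individual components, and then to observe that each component together with its connecting arcs can be written down in time polynomial in its size.

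First I would count vertices. No vertex is created during typification apart from $s$ and $t$, so the vertex set of $G$ is the disjoint union of the vertices of the formula gadget, the $n_x$ variable gadgets, the $n_I$ inconsistency gadgets, and $\{s,t\}$. The formula gadget has $\bigO{m}$ clause assignment units with three vertices each, hence $\bigO{m}$ vertices. The variable gadget for $x_i$ has $\bigO{q_i}$ vertices, and $\sum_{i=1}^{n_x} q_i$ is at most the number $3m$ of literals in $\varphi$, so all variable gadgets together contribute $\bigO{m}$ vertices. Each inconsistency gadget has a constant number of vertices and there are $n_I \in \bigO{m^2}$ of them, contributing $\bigO{m^2}$ vertices. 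In total $G$ has $\bigO{m^2}$ vertices.

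Next I would count arcs, splitting them into (a) arcs internal to a single gadget, (b) the inter-gadget connecting arcs (the dashed arcs of \cref{fig:sfp_clause_assignment_unit,fig:sfp_variable_gadget_with_connections,fig:sfp_inconsistency_gadget_with_connections}), and (c) the typification arcs. For (a), each gadget has a number of internal arcs linear in its number of vertices (every arc is at most doubled, and the complete bipartite links inside the formula gadget join $\bigO{1}$ units per layer over $m$ layers), so by the vertex count this is $\bigO{m^2}$. For (b), each of the $\bigO{m}$ clause assignment units contributes $\bigO{1}$ connecting arcs to variable gadgets (two per $x$-literal, so at most six) and one $s^L$-$v^L$-path $P^L$; since $P^L$ runs through the inconsistency gadgets of all inconsistencies containing $L$, of which there are $\bigO{m}$, the path $P^L$ uses $\bigO{m}$ arcs, yielding $\bigO{m^2}$ connecting arcs overall, in line with $n_I \in \bigO{m^2}$. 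For (c), the "Magnitudes and Parameters" paragraph already records that typification introduces $2 p (n_x + n_I + 1) + (p+1)(n_x + n_I + 1)(n_x + n_I) \in \bigO{m^5}$ arcs. Summing, $G$ has $\bigO{m^5}$ arcs, hence $G$ is of polynomial size.

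Finally, for the running time I would note that every gadget has an explicit description whose size is polynomial in $m$ and can be emitted in time linear in that size; enumerating the $\bigO{m}$ clause assignment units, the $n_x \le 3m$ variable gadgets, the $n_I \in \bigO{m^2}$ inconsistency gadgets, adding the $\bigO{m^2}$ connecting arcs, and adding the $\bigO{m^5}$ typification arcs are all doable in time polynomial in $m$, hence in $|\varphi|$. I expect no genuine obstacle here; the only point requiring a little care is confirming that the inter-gadget connecting paths, in particular the paths $P^L$ through the inconsistency gadgets, do not contribute more than $\bigO{m^2}$ arcs, and that the typification bound $\bigO{m^5}$ from the earlier section indeed dominates everything else.
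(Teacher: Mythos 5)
Your proposal is correct and follows essentially the same route as the paper: count the $\bigO{m}$ clause assignment units, $n_x$ variable gadgets, and $n_I \in \bigO{m^2}$ inconsistency gadgets, bound the inter-gadget arcs by charging them to the incident gadgets, and reuse the $\bigO{m^5}$ typification bound, concluding polynomial size and polynomial-time constructibility. Your accounting is somewhat more explicit than the paper's (separate vertex count, and charging the $P^L$ arcs to clause assignment units rather than four arcs per inconsistency gadget), but this is only a difference in bookkeeping, not in approach.
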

\begin{proof}
	For a given instance~$\varphi(x, y) = C_1 \vee \dots \vee C_m$ with $n_x$~many $x$-variables, let $G$ be the graph as described in this section.
	Its size is polynomial in the size of~$\varphi(x, y)$ as it contains $\bigO{m}$ clause assignment units, $n_x$~variable gadgets, and $\bigO{m^2}$ inconsistency gadgets.
	The size of the clause assignment and inconsistency gadgets is constant and the size of a variable gadget is linear in the number of occurrences of the corresponding $x$-variable.
	We add $\bigO{m^5}$ arcs for the typification and to see that also only polynomially many arcs connect different gadgets we can associate these to at least one of the two corresponding gadgets.
	Every inconsistency gadget is connected by exactly four inter-gadget arcs and every clause assignment unit is connected by either two, four, or six inter-gadget arcs.
	All the arcs connecting a variable gadget to other gadgets have the other endpoint in a clause assignment unit and are thus already considered.
	Hence, the number of inter-gadget arcs is polynomially bounded.
	Moreover, we can also construct the graph~$G$ from the formula~$\varphi$ in polynomial time.
	\qed
\end{proof}

\pagebreak[2]

\begin{lemma} \label{lem:sfp_hardness-lower_bound_separating_pairs}
	At least $k$ forbidden pairs are required to separate $s$ and~$t$ in~$G$, where $k$ is defined as in \cref{eq:sfp:def-k} on \cpageref{eq:sfp:def-k}.
\end{lemma}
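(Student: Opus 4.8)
The plan is to prove the bound by weak linear programming duality, using the primal--dual pair of linear programs for \ADP and \SFP introduced in the duality subsection. A set of forbidden pairs separating $s$ and $t$ is exactly an integral ($0/1$-valued) feasible solution of the covering LP $\min \sum_q x_q$ subject to $\sum_{q \in \ownchoose{A(P)}{2}} x_q \ge 1$ for all $P \in \setofstpaths$; hence its size is at least the optimum of that LP, which by weak duality is at least the value $\sum_{P \in \setofstpaths} y_P$ of any feasible solution $y \ge 0$ of the packing LP $\max \sum_P y_P$ subject to $\sum_{P : q \subseteq A(P)} y_P \le 1$ for all $q \in \ownchoose{A}{2}$. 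So it suffices to construct a feasible $y$ with $\sum_P y_P = k$.

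I would build $y$ from three families of $s$-$t$-paths, one per summand of $k$ in \cref{eq:sfp:def-k}. Write $N = n_x + n_I + 1$ for the number of gadgets (variable gadgets, inconsistency gadgets, and the formula gadget) that receive their own type in the typification. First, for every ordered pair $(i,j)$ of distinct gadgets and every choice of an arc $a$ in the $p$-arc bunch $s s^i$, an arc $c$ in the $(p+1)$-arc diagonal bunch $s^i t^j$, and an arc $b$ in the $p$-arc bunch $t^j t$, set $y$ on the length-$3$ path $s \xrightarrow{a} s^i \xrightarrow{c} t^j \xrightarrow{b} t$ to $1/(p+1)$; these contribute $\sum_{(i,j)} p^2(p+1)\cdot\tfrac{1}{p+1} = p^2 N(N-1) = k_0$. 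Second, for every inconsistency gadget pick one $s$-$t$-path running $s \to s^I \to v^I_1 \to v^I_2 \to v^I_3 \to v^I_4 \to t^I \to t$ through it (picking one arc from each adjoining bunch) and give it weight $1$, contributing $n_I$. Third, for every variable gadget $x_i$ take the $q_i+1$ internally arc-disjoint $s^i$-$t^i$-paths — the paths $(s^i,v^i_{j,1},\dots,v^i_{j,7},t^i)$ for $j = 1,\dots,q_i$ together with $(s^i,v^i_{0,1},v^i_{0,2},v^i_{0,3},v^i_{0,5},v^i_{0,6},v^i_{0,7},t^i)$ — extend each through a \emph{distinct} arc of the bunch $s s^i$ and a distinct arc of the bunch $t^i t$, which is possible since $p = \max_i q_i + 2 > q_i + 1$, and give each weight $1$, contributing $\sum_i (q_i+1)$. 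The total is $k_0 + n_I + \sum_i(q_i+1) = k$.

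It remains to verify feasibility, i.e.\ that every pair $q \in \ownchoose{A}{2}$ lies on chosen paths of total weight at most $1$. The only pairs attaining weight exactly $1$ are those of the form $\{a,b\}$ with $a$ in a bunch $s s^i$ and $b$ in a bunch $t^j t$ with $i \ne j$: such a pair lies on exactly the $p+1$ first-family paths obtained by varying the diagonal arc $c$, and on no other chosen path. For the remaining pairs: within the first family, a pair $\{a,c\}$ or $\{c,b\}$ lies on only $p$ chosen paths, so its weight is $p/(p+1) < 1$, and two parallel arcs never lie on a common simple path; the second- and third-family paths are pairwise arc-disjoint among themselves and across those two families, so any pair lies on at most one of them; a first-family path and a second- or third-family path can share at most a single arc, namely a bunch arc $s s^i$, because the diagonal arcs and the bunch arcs $t^j t$ with $j \ne i$ never occur on a path traversing the interior of gadget $i$, so no pair lies on one path of each family; and every pair not covered by these cases lies on no chosen path at all, since $y$ is supported only on typification arcs and the interiors of inconsistency and variable gadgets. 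Thus $y$ is feasible, $\sum_P y_P = k$, and the lemma follows by weak LP duality.

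The main obstacle I expect is this feasibility bookkeeping, specifically the claim in the third point that a first-family path and a second-/third-family path overlap in at most one arc, together with the claim that the cross-type pairs $\{a,b\}$ are the only tight ones. Both hinge on the structural fact that the diagonal bunch arcs $s^i t^j$ are the only arcs by which a path can leave a gadget's source without entering that gadget's interior, and that the chosen second- and third-family paths never use a diagonal arc; once these are pinned down, the rest is routine counting.
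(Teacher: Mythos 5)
Your proof is correct, but it takes a genuinely different route from the paper. The paper argues structurally: by the typification construction every separating set must contain the $k_0$ pairs combining an arc of a bunch $s s^i$ with an arc of a bunch $t^j t$ of a different type, by \cref{lem:sfp_hardness-inconsistency_separation} it must contain the unique pair $\{v^I_1 v^I_2, v^I_3 v^I_4\}$ of every inconsistency gadget, and by \cref{lem:sfp_hardness-separating_sets_variable_gadget} it must contain either $\cA_i$ or $\overline{\cA_i}$ for every variable gadget; since these forced pairs lie in pairwise disjoint arc sets, the counts add up to $k$. You instead exhibit an explicit fractional path packing of value $k$ (weight $1/(p+1)$ on the length-three typification paths, weight $1$ on one path through each inconsistency gadget and on the $q_i+1$ arc-disjoint paths through each variable gadget, extended by distinct bunch arcs) and invoke weak LP duality --- the same device the paper uses in its duality-gap lemma, here applied to the hardness construction. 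Your feasibility bookkeeping holds up: the only tight pairs are those consisting of an $s s^i$-arc and a $t^j t$-arc with $i \ne j$, covered by exactly the $p+1$ first-family paths through the diagonal bunch; the minor imprecision that a first-family path may share a $t^j t$-arc (not only an $s s^i$-arc) with a gadget path does not matter, since in either case they share at most one arc because the gadget path's source and sink bunches carry the same index while $i \ne j$. What each approach buys: yours is self-contained, needing neither the uniqueness statements of \cref{lem:sfp_hardness-inconsistency_separation,lem:sfp_hardness-separating_sets_variable_gadget} nor the informal counting in the typification discussion, only an explicit certificate checked pair by pair; the paper's argument is shorter given those lemmas and, more importantly, yields structural information about which pairs any $k$-pair separating set must contain, which is exactly what is reused afterwards in the proof of \cref{lem:sfp_hardness-not_satisiable-k_pairs_insufficient}, whereas your duality bound gives the count alone.
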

\begin{proof}
	This follows from the typification construction.
	Every set of forbidden pairs~$\cA$ has to contain
	\begin{itemize}[nolistsep]
		\item the $k_0$ pairs to separate the graph that consists only of typification arcs,
		\item the $n_I$ pairs to separate all inconsistency gadgets (see \cref{lem:sfp_hardness-inconsistency_separation}), and
		\item for every $x_i$ either $\cA_i$ or $\overline{\cA}_i$ (see \cref{lem:sfp_hardness-separating_sets_variable_gadget}). \qed
	\end{itemize}
\end{proof}

\begin{lemma} \label{lem:sfp_hardness-satisfiable-k_pairs_sufficient}
	If the \SigmaTwoSat instance~$\varphi$ is satisfiable, we can separate $s$ and~$t$ in~$G$ by $k$ forbidden pairs, where $k$ is defined as in \cref{eq:sfp:def-k} on \cpageref{eq:sfp:def-k}.
\end{lemma}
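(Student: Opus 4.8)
The plan is to exhibit one explicit separating set~$\cA$ of size exactly~$k$. Fix an $x$-variable assignment~$T_X$ witnessing satisfiability, i.e.\ with $\varphi(T_X, T_Y) = \true$ for every~$T_Y$, and let $\cA$ consist of (i)~the $k_0$ typification pairs $\{ss^a, t^bt\}$ ranging over all ordered pairs of distinct gadget types~$a\neq b$ (cf.\ \cref{eq:sfp:def-k0}), (ii)~for every inconsistency~$I$ the pair $\{v^I_1 v^I_2, v^I_3 v^I_4\}$ of \cref{lem:sfp_hardness-inconsistency_separation}, and (iii)~for every $x$-variable~$x_i$ the set~$\cA_i$ if $T_X(x_i) = \true$ and~$\overline{\cA_i}$ otherwise, as in \cref{lem:sfp_hardness-separating_sets_variable_gadget}. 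As these three groups involve pairwise disjoint arcs, $|\cA| = k_0 + n_I + \sum_{i=1}^{n_x}(q_i+1) = k$ by \cref{eq:sfp:def-k}, so it suffices to show that every $s$-$t$-path~$P$ in~$G$ contains both arcs of some pair of~$\cA$.

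First I would peel off the typification arcs. No arc enters a gadget input vertex~$s^0$, $s^i$, or~$s^I$ except the parallels leaving~$s$ (the diagonal arcs enter only output vertices), so~$P$ uses exactly one arc~$ss^a$ and, symmetrically, exactly one arc~$t^bt$, at its two ends; if $a\neq b$ then $\{ss^a, t^bt\}\in\cA$ and we are done, so assume~$P$ runs from the input to the output of one fixed gadget. For an inconsistency gadget~$I$ this already finishes the argument: the only arc out of~$s^I$ enters~$v^I_1$, whose only out-arc is~$v^I_1 v^I_2$, and the only in-arc of~$t^I$ that~$P$ can use comes from~$v^I_4$, whose only in-arc is~$v^I_3 v^I_4$; hence~$P$ contains the pair~$\{v^I_1 v^I_2, v^I_3 v^I_4\}$, whatever~$P$ does in between. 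For a variable gadget~$x_i$ the first three steps of~$P$ are forced to be $s^i v^i_{a,1}$, $v^i_{a,1}v^i_{a,2}$, $v^i_{a,2}v^i_{a,3}$ and the last three to be $v^i_{c,5}v^i_{c,6}$, $v^i_{c,6}v^i_{c,7}$, $v^i_{c,7}t^i$ for some indices~$a,c\in\{0,\dots,q_i\}$; so~$P$ contains $\{v^i_{a,1}v^i_{a,2}, v^i_{a,2}v^i_{a,3}\}\in\cA_i$ when $T_X(x_i)=\true$ and $\{v^i_{c,5}v^i_{c,6}, v^i_{c,6}v^i_{c,7}\}\in\overline{\cA_i}$ otherwise.

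The remaining and main case is~$P$ running from~$s^0$ to~$t^0$. Using acyclicity and the topological order of \cref{lem:sfp_hardness-graph_acyclic}, I would argue that a~$P$ not already containing a pair of~$\cA$ must be \emph{standard}: it visits exactly one clause assignment unit per layer, say the unit of the local assignment~$M_i$ for clause~$C_i$, traverses the internal path~$P^{M_i}$, and leaves~$v^{M_i}$ either by the direct arc~$v^{M_i}t^{M_i}$ (when present) or by a proper detour through a variable gadget. Indeed, any other behaviour either forces~$P$ to end at the output of a variable or inconsistency gadget---contradicting that~$P$ ends at~$t^0$---or drives~$P$ through some inconsistency gadget along both~$v^I_1 v^I_2$ and~$v^I_3 v^I_4$, which already contains a pair of~$\cA$. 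Now if the~$M_i$ are pairwise inconsistent, pick $i<j$ with $\{M_i, M_j\}$ an inconsistency; then~$P^{M_i}$ uses~$v^I_1 v^I_2$ and~$P^{M_j}$ uses~$v^I_3 v^I_4$ in the corresponding inconsistency gadget, so~$P$ contains that gadget's pair. Otherwise the~$M_i$ glue to a global assignment~$T_Y$ (every $y$-variable occurs, by \cref{ass:sfp_hardness-variable_in_at_least_2_clauses}), and since $\varphi(T_X, T_Y) = \true$ some clause~$C_i$ has all three of its literals true. Then~$M_i$ satisfies every $y$-literal of~$C_i$, so the direct arc~$v^{M_i}t^{M_i}$ is absent and~$P$ must take a detour in layer~$i$; such a detour exists only because~$C_i$ contains an $x$-literal, and every $x$-literal of~$C_i$ is true under~$T_X$, so by the correspondence of \cref{lem:sfp_hardness-separating_sets_variable_gadget} every detour available in unit~$M_i$ traverses two arcs forming a pair of~$\cA$. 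Hence~$P$ contains a pair of~$\cA$, which completes the proof. The delicate point throughout---and the step I expect to be the main obstacle---is precisely this reduction to standard paths: pinning down the ``mixed'' paths that wander between gadgets; the forced-segment observations settle the variable and inconsistency types outright, while for the formula gadget acyclicity together with the type bookkeeping cuts matters down to the clean clause-by-clause statement above.
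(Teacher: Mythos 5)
Your proposal is correct and follows essentially the same route as the paper: the same choice of $\cA$ (typification pairs, one pair per inconsistency gadget, and $\cA_i$ or $\overline{\cA_i}$ according to $T_X$), the same reduction via typification to paths entering and leaving a single gadget type, and the same gadget-by-gadget analysis ending with the satisfied clause forcing a detour through a variable gadget whose two forced arcs form a chosen pair. Only cosmetic nitpick: ``if the $M_i$ are pairwise inconsistent'' should read ``if some two $M_i, M_j$ are inconsistent,'' as your next clause already makes clear.
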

\begin{proof}
	If $\varphi$ is satisfiable, there is an $x$-variable assignment~$T_X$ such that~$\varphi$ evaluates to \true no matter which values are assigned to the $y$-variables.
	We define a set of forbidden pairs depending on $T_X$ as follows.

	First, it contains the $k_0$ forbidden pairs that separate the graph consisting of typification arcs.
	Second, it contains the $n_I$ pairs that separate all inconsistency gadgets, compare \cref{lem:sfp_hardness-inconsistency_separation}.
	And finally, we choose a separating set for every $x$-variable~$x_i$.
	If $T_X(x_i) = 1$, we use the separating set~$\cA_i$.
	Otherwise, we use~$\overline{\cA_i}$.
	See \cref{lem:sfp_hardness-separating_sets_variable_gadget} for more information on these two sets.

	By \cref{eq:sfp:def-k,lem:sfp_hardness-inconsistency_separation,lem:sfp_hardness-separating_sets_variable_gadget} we have chosen $k$ forbidden pairs.
	Moreover, by the typification construction, all paths that do not use any gadget and those whose first and last gadgets are not the same contain a forbidden pair.

	It remains to prove that every path that enters a gadget via a direct arc from~$s$ and leaves this gadget via a direct arc to~$t$ completely contains at least one forbidden pair.
	We consider the different gadgets.

	First, consider an inconsistency~$I$ and the corresponding inconsistency gadget.
	Every path entering this gadget via~$s s^I$ must also use the arc~$v^I_1 v^I_2$.
	Analogous, every path leaving this gadget via~$t^I t$ must also use the arc~$v^I_3 v^I_4$.
	Thus, every path entering and leaving this gadget via input~$s^I$ and output~$t^I$ contains the forbidden pair $\{v^I_1 v^I_2, v^I_3 v^I_4\}$ that we have chosen.

	Next, consider the variable gadget for a variable~$x_i$.
	Similarly to the inconsistency gadget, a path entering the gadget via~$s^i$ can leave the gadget at the earliest at some vertex~$v^i_{j, 3}$ and, thus, it has to contain the pair $\{v^i_{j, 1} v^i_{j, 2}, v^i_{j, 2} v^i_{j, 3}\}$.
	Analogous, a path leaving the gadget via~$t^i$ must enter the gadget at the latest at some vertex $v^i_{j', 5}$ and, thus, it has to contain the pair $\{v^i_{j', 5} v^i_{j', 6}, v^i_{j', 6} v^i_{j', 7}\}$.
	At least one of these two pairs is contained in the set of forbidden pairs we have chosen.

	Finally, let us consider the formula gadget and let~$P$ be a path that enters the gadget via~$s^0$ and leaves it finally via~$t^0$.
	The path~$P$ passes through multiple inconsistency gadgets.
	If it uses more than one arc from one of them, it directly contains the forbidden pair chosen in this inconsistency gadget.
	Thus, we can assume that~$P$ uses at most one arc from every inconsistency gadget.

	If the path~$P$ leaves some clause assignment unit for a clause~$C$ via an arc to a variable gadget, it has to leave this variable gadget via an arc to the vertex~$t^L$ of a clause assignment unit that also corresponds to clause~$C$.
	Thus, for every clause, this path enters exactly one clause assignment unit via its input~$s^L$ and uses the $s^L$-$v^L$-path~$P^L$ therein.
	Consequently, if~$P$ passes through clause assignment units corresponding to inconsistent assignments $L$ and~$L'$, it contains the forbidden pair contained in the inconsistency gadget for $I = \{L, L'\}$.

	Therefore, we can assume that $P$ enters only clause assignment units corresponding to consistent assignments.
	This allows us to define a global $y$-variable assignment~$T_Y$ by combining the local clause assignments related to the clause assignment units that~$P$ enters via~$s^L$.
	As~$\varphi$ is satisfiable and $T_X$ is chosen appropriately we have that $\varphi$ evaluates to \true with $T = (T_X, T_Y)$.
	In particular, there is at least one clause~$C$ that is fulfilled.
	Let us consider the clause assignment unit associated to~$C$ that~$P$ enters via~$s^L$.
	As this clause is fulfilled, all $y$-literals are \true and, thus, the arc~$v^L t^L$ is not present.
	Hence, the path~$P$ has to pass through an $x$-variable gadget.
	However, as also this $x$-literal in~$C$ is \true, by the construction of the graph and the choice of the forbidden pairs, $P$ has to use a forbidden pair in this variable gadget.

	In all possible cases, the path~$P$ contains a forbidden pair.
	Thus, $s$ and~$t$ can be separated in~$G$ by $k$ forbidden pairs.
	\qed
\end{proof}

\begin{lemma} \label{lem:sfp_hardness-not_satisiable-k_pairs_insufficient}
	If the \SigmaTwoSat instance~$\varphi$ is not satisfiable, we cannot separate $s$ and~$t$ in~$G$ by $k$ forbidden pairs, where $k$ is defined as in \cref{eq:sfp:def-k} on \cpageref{eq:sfp:def-k}.
\end{lemma}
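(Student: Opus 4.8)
The plan is to prove the contrapositive in the following form: no set of exactly $k$ forbidden pairs separates $s$ and~$t$ (by \cref{lem:sfp_hardness-lower_bound_separating_pairs}, fewer than $k$ never suffice, so this is enough). Let $\cA$ be an arbitrary set of $k$ forbidden pairs. By \cref{lem:sfp_hardness-lower_bound_separating_pairs} (more precisely, its proof) together with \cref{eq:sfp:def-k}, $\cA$ must consist \emph{exactly} of the $k_0$ typification pairs, the $n_I$ inconsistency pairs $\{v^I_1 v^I_2, v^I_3 v^I_4\}$ (one per inconsistency gadget, \cref{lem:sfp_hardness-inconsistency_separation}), and, for every $x$-variable~$x_i$, exactly one of the sets $\cA_i$ or~$\overline{\cA_i}$ of \cref{lem:sfp_hardness-separating_sets_variable_gadget} -- there is no further freedom. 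We read off an $x$-assignment~$T_X$ by setting $T_X(x_i) = 1$ if $\cA_i \subseteq \cA$ and $T_X(x_i) = 0$ otherwise. Since $\varphi$ is not satisfiable, there is a $y$-assignment~$T_Y$ such that $\varphi$ is \false under $T = (T_X, T_Y)$; as $\varphi$ is in 3-DNF, this means that \emph{every} clause~$C_i$ contains at least one literal that is false under~$T$.

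Next I would construct an $s$-$t$-path~$P$ that avoids every pair of~$\cA$, which shows that $\cA$ does not separate $s$ and~$t$. The path uses the typification arc~$s s^0$, passes through the formula gadget, and leaves it via~$t^0 t$; inside the formula gadget it visits, for each clause~$C_i$, the clause assignment unit of the local assignment $L_i := \restrict{T_Y}{Y(C_i)}$, entering via~$s^{L_i}$, following~$P^{L_i}$ to~$v^{L_i}$, then moving on to~$t^{L_i}$ and, via the inter-unit arc, to~$s^{L_{i+1}}$ (or to~$t^0$ after the last clause). The passage from~$v^{L_i}$ to~$t^{L_i}$ is where the false literal of~$C_i$ enters: if $C_i$ has a false $y$-literal under~$L_i$, then the arc~$v^{L_i} t^{L_i}$ is present and $P$ takes it; otherwise $C_i$ has a false $x$-literal on some variable~$x_\ell$, and $P$ routes through the variable gadget of~$x_\ell$ along the path associated with that literal occurrence. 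Since $G$ is acyclic and every segment moves forward in the topological order of \cref{lem:sfp_hardness-graph_acyclic}, $P$ is a genuine simple $s$-$t$-path.

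The heart of the argument is checking that $P$ contains no forbidden pair of~$\cA$. By the rigidity noted above, the only pairs in~$\cA$ are typification pairs, inconsistency pairs, and variable-gadget pairs; all other arcs used by~$P$ (the $P^L$-internal arcs, inter-gadget and inter-unit arcs, and the direct arcs~$v^L t^L$) lie in no pair at all. For typification, $P$ uses exactly the typification arcs~$s s^0$ and~$t^0 t$, which are of the same (formula) type, while every typification pair joins arcs of two distinct types, so none is fully contained. For inconsistency gadgets, $P$ enters the gadget of an inconsistency $I = \{L, L'\}$ only along one of the two paths~$P^L$, $P^{L'}$ and thus uses exactly one of~$v^I_1 v^I_2$, $v^I_3 v^I_4$ there; it never uses both, because the local assignments $L_1, \dots, L_m$ chosen by~$P$ are pairwise consistent (they all restrict the single global~$T_Y$), so no $\{L_i, L_j\}$ is an inconsistency. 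For variable gadgets, $P$ enters the gadget of~$x_\ell$ only in the $x$-literal case, using precisely the two arcs of the pair in~$\cA_\ell$ (if the occurrence is non-negated) or in~$\overline{\cA_\ell}$ (if negated); but since the corresponding literal is false under~$T_X$, $\cA$ contains the \emph{other} of these two sets, hence this pair is not in~$\cA$. Therefore $P$ avoids~$\cA$, so $\cA$ does not separate $s$ and~$t$, and as $\cA$ was arbitrary the lemma follows.

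The step I expect to be the main obstacle is this last verification: carefully accounting for every gadget the path dips into and confirming that in each case the path uses at most the arcs it is ``allowed'' to -- in particular that $P$ never completes the variable-gadget pair that \emph{is} in~$\cA$ (which is exactly where the unsatisfiability hypothesis is used) and never uses both arcs of an inconsistency pair (which relies on the consistency of the~$L_i$). Pinning down the rigid structure of~$\cA$ from \cref{lem:sfp_hardness-lower_bound_separating_pairs}, so that a size-$k$ separating set has no freedom beyond the per-variable binary choice, is a further point to spell out, although it is essentially immediate from the lower-bound lemma and \cref{eq:sfp:def-k}.
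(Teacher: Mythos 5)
Your proposal is correct and follows essentially the same route as the paper's proof: extract $T_X$ from the forced choice of $\cA_i$ versus $\overline{\cA_i}$ in each variable gadget, take a falsifying $T_Y$, and build the $s$-$t$-path through the clause assignment units of the consistent local assignments, escaping each unit via the $v^L t^L$ arc for a false $y$-literal or via the unchosen pair in the variable gadget of a false $x$-literal, then check that no typification, inconsistency, or variable-gadget pair is fully contained. The only cosmetic difference is that you make the rigidity of a size-$k$ separating set explicit up front rather than arguing by contradiction, which is an immediate consequence of \cref{lem:sfp_hardness-lower_bound_separating_pairs} and \cref{eq:sfp:def-k}, exactly as you note.
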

\begin{proof}
	Suppose for the sake of a contradiction that we can separate $s$ and~$t$ in~$G$ by $k$ forbidden pairs.

	By \cref{lem:sfp_hardness-lower_bound_separating_pairs} we need at least $k$ forbidden pairs.
	By the typification construction and by \cref{lem:sfp_hardness-inconsistency_separation,lem:sfp_hardness-separating_sets_variable_gadget} we have to choose the forbidden pairs from $\cA_i$ or~$\overline{\cA_i}$ in a variable gadget corresponding to variable~$x_i$.

	We define an $x$-variable assignment~$T_X$ based on this set of forbidden pairs.
	A variable~$x_i$ is set to \true if we have chosen $\cA_i$ to separate its variable gadget.
	Otherwise, if we have chosen $\overline{\cA_i}$, we set~$x_i$ to \false.

	As $\varphi$ is not satisfiable, there exists a $y$-variable assignment~$T_Y$ such that $\varphi$ evaluates to \false with $T = (T_X, T_Y)$.
	This $y$-variable assignment~$T_Y$ corresponds to exactly one clause assignment unit~$T_{Y(C)} = \restrict{T_Y}{Y(C)}$ for every clause~$C$.
	We now construct an $s$-$t$-path in~$G$ that does not contain a forbidden pair.
	This path starts with the arc~$s s^0$ and ends with~$t^0 t$.
	For every clause~$C$ it passes through the clause assignment unit corresponding to~$L = \restrict{T_Y}{Y(C)}$ where it first uses the $s^L$-$v^L$-path~$P^L$.
	If the clause contains a $y$-literal that is \false, the path continues along the arc~$v^L t^L$ that is present in this case.
	Otherwise, there is an $x$-literal that is not fulfilled.
	In this case, there exist a $v^L$-$t^L$-path through the corresponding variable gadget using two arcs that are not chosen as a forbidden pair (as this literal is \false).

	The path constructed this way does not contain a forbidden pair from a variable gadget.
	It does not contain a forbidden pair from an inconsistency gadget either as it only uses at most one arc from every inconsistency gadget.
	This is the case because it only uses consistent assignments for the clauses.
	And since the path does not contain a forbidden pair used to separate the graph consisting of only typification arcs, the path does not contain a forbidden pair at all.
	This contradicts our initial assumption and finishes the proof.
	\qed
\end{proof}

\section{Conclusion} \label{sec:conclustion}
Graph-theoretic problems based on paths are extensively studied.
In particular, this is the case for (arc- or vertex-) disjoint paths problems. 
If we relax this slightly and look for almost disjoint paths, we obtain a problem that is often required in applications but rarely researched from a theoretical point of view.
By allowing paths to have up to one arc in common, the almost disjoint paths problem is perhaps the most natural relaxation of the disjoint paths problem.
We have shown that this problem is already hard on directed acyclic graphs.
However, our dynamic program allows us to compute constantly many almost disjoint paths on this graph class in polynomial time.
Many other facets of \ADP's complexity are still open.
For example, it is unclear how the problem behaves for other graph classes (including undirected graphs), whether faster algorithms for the case of constantly many paths are possible, or how the problem changes when paths may have even more arcs in common.

Similarly, restricting \SFP to special graph classes and seeing how this affects its complexity is of interest.
Moreover, we would like to determine classes of graphs where \ADP and \SFP form a strongly dual pair, like for the class of graphs that have a cut with a single outgoing arc.
In such cases \SFP and \ADP are equally hard and it suffices to analyze the complexity of either one.

%
% ---- Bibliography ----
%
% BibTeX users should specify bibliography style 'splncs04'.
% References will then be sorted and formatted in the correct style.
%
\bibliographystyle{splncs04}
\bibliography{mybibliography}

\end{document}